\tikzstyle{every picture}+=[remember picture,inner xsep=0,inner ysep=0.25ex]
\DeclareFontFamily{U}{mathx}{\hyphenchar\font45 }
\DeclareFontShape{U}{mathx}{m}{n}{
	<5><6><7><8><9><10><10.95><12><14.4><17.28><20.74><24.88> mathx10
}{}
\DeclareSymbolFont{mathx}{U}{mathx}{m}{n}
\DeclareMathAccent{\widecheck}{0}{mathx}{"71}
\newcommand*{\sublabel}[1]{%
	\let\old@currentlabel\@currentlabel%
	\renewcommand{\@currentlabel}{\theenumii}%
	\label{#1}%
	\let\@currentlabel\old@currentlabel%
}
\DeclareMathOperator{\Span}{span}
\DeclareMathOperator{\Rank}{rank}
\DeclareMathOperator{\Card}{card}
\DeclareMathOperator{\Dep}{Dep}
\DeclareMathOperator{\Depn}{DepN}
\DeclareMathOperator{\Ker}{Ker}
\DeclareMathOperator{\mult}{mult}
\DeclareMathOperator{\col}{col}
\DeclareMathOperator{\row}{row}
\DeclareMathOperator{\coef}{coef}
\def\widebreve{\mathpalette\wide@breve}
\def\wide@breve#1#2{\sbox\z@{$#1#2$}%
	\mathop{\vbox{\m@th\ialign{##\crcr
				\Kern0.08em\brevefill#1{0.8\wd\z@}\crcr\noalign{\nointerlineskip}%
				$\hss#1#2\hss$\crcr}}}\limits}
\def\brevefill#1#2{$\m@th\sbox\tw@{$#1($}%
	\hss\resizebox{#2}{\wd\tw@}{\rotatebox[origin=c]{90}{\upshape(}}\hss$}
\newcommand{\RR}{\mathbb R}
\newcommand{\NN}{\mathbb N}
\newcommand{\ZZ}{\mathbb Z}
\newcommand{\cT}{\mathcal T}
\newcommand{\Sym}{\mathbb{S}}
\newcommand{\cS}{\mathcal{S}}
\newcommand{\cM}{\mathcal M}
\newcommand{\cH}{\mathcal H}
\newcommand{\cZ}{\mathcal Z}
\newcommand{\cC}{\mathcal C}
\newcommand{\benu}{\begin{enumerate}}
	\newcommand{\eenu}{\end{enumerate}}
\newcommand{\bop}{\begin{opomba}}
	\newcommand{\eop}{\end{opomba}}
\newcommand{\Bor}{\mathrm{Bor}}
\newcommand{\tr}{\mathrm{tr}}
\newcommand{\supp}{\mathrm{supp}}
\newtheorem{theorem}{Theorem}[section]
\newtheorem{corollary}[theorem]{Corollary}
\newtheorem{lemma}[theorem]{Lemma}
\newtheorem{proposition}[theorem]{Proposition}
\theoremstyle{definition}
\definecolor{green-new}{rgb}{0.0, 0.5, 0.0}
\definecolor{cyan}{rgb}{0.0, 0.8, 1.0}
\theoremstyle{definition}
\newtheorem{remark}[theorem]{Remark}
\numberwithin{equation}{section}
\begin{document}

	\numberwithin{equation}{section}

	\title[]
	{Matricial Gaussian quadrature rules: singular case}
	
	\author[A. Zalar]{Alja\v z Zalar${}^{1}$}
	\address{Alja\v z Zalar, 
		Faculty of Computer and Information Science, University of Ljubljana  \& 
		Faculty of Mathematics and Physics, University of Ljubljana  \&
		Institute of Mathematics, Physics and Mechanics, Ljubljana, Slovenia.}
	\email{aljaz.zalar@fri.uni-lj.si}
	\thanks{${}^1$Supported by the ARIS (Slovenian Research and Innovation Agency)
		research core funding No.\ P1-0288 and grants No.\ J1-50002, J1-60011.}
	
	\author[I. Zobovi\v c]{Igor Zobovi\v c${}^{2}$}
	\address{Igor Zobovi\v c, 
		Institute of Mathematics, Physics and Mechanics, Ljubljana, Slovenia.}
	\email{igor.zobovic@imfm.si}
	\thanks{${}^2$Supported by the ARIS (Slovenian Research and Innovation Agency)
		research core funding No.\ P1-0288.}
	
	\begin{abstract}
		Let $L$ be a linear operator on univariate polynomials of bounded degree taking values in real symmetric matrices, whose moment matrix is positive semidefinite. Assume that $L$ admits a positive matrix-valued representing measure $\mu$. Any finitely atomic representing measure with the smallest sum of the ranks of the matricial masses is called minimal. In this paper, we characterize the existence of a minimal representing measure that contains a prescribed atom with a prescribed rank of the corresponding mass, thereby generalizing our recent result \cite{ZZ25}, which addresses the same problem in the case where the moment matrix is positive definite. As a corollary, we obtain a constructive, linear-algebraic proof of the strong truncated Hamburger matrix moment problem.
		\looseness=-1
	\end{abstract}

	\subjclass[2020]{{Primary 65D32, 47A57, 47A20, 44A60; Secondary 
			15A04, 47N40.}}
	
	\keywords{Gaussian quadrature, truncated matrix moment problem, representing measure, moment matrix.}
	\date{\today}
	\maketitle
	
	
	\section{Introduction}
	\label{introduction}    
	In this paper we study matricial Gaussian quadrature rules for a linear operator $L$ on univariate polynomials of bounded degree with values in real symmetric matrices.
	We work in the setting where the truncated moment matrix (see \eqref{def:moment-matrix}) is positive semidefinite and 
	$L$ admits a positive matrix-valued representing measure. 
	More precisely, for a fixed real number 
	$t$ and an integer $m\in\NN\cup\{0\}$, we characterize the conditions under which there exists a minimal representing measure for $L$ that contains $t$ in its support and whose corresponding mass has rank $m$.
	Our results extend the positive definite case treated in \cite{ZZ25} to the singular setting.
	These results will be important in solving the truncated univariate matrix rational moment problem, in analogy with the scalar case studied in \cite{NZ25}. 
	We present the positive semidefinite case separately, since—unlike in the positive definite situation—the moment matrix exhibits nontrivial column relations that, due to its recursively generated structure, propagate to neighboring block columns on the right. This propagation significantly increases the technical complexity of both the construction and the proofs.
	
	{Let $i,j\in \ZZ$, $i\leq j$. We write $[i;j]:=\{i,i+1,\ldots,j\}$.
		For $j\in \NN$, $[j]:=[1;j]=\{1,2,\ldots,j\}$ is used for short.}
	Let $k\in \NN\cup \{0\}$ and $p\in \NN$.
	We denote by $\RR[x]_{\leq k}$ the vector space of univariate polynomials of degree at most $k$ and by $\Sym_p(\RR)$ the set of real symmetric matrices of size $p\times p$.
	For a given linear operator
	\begin{equation} 
		\label{def:linear-mapping}
		L:\RR[x]_{\leq 2n}\to \Sym_p(\RR)
		\        \end{equation}
	denote its \textbf{matricial moments} by $S_i:=L(x^{i})$ for $i\in [0;2n]$. 
	Assume that 
	$L$ admits 
	a positive $\Sym_p(\RR)$-valued measure $\mu$ (see Subsection \ref{subsec:matrix-measure}), i.e., 
	\begin{equation}
		\label{moment-measure-cond}
		L(p)=\int_{\RR} p\; d\mu\quad \text{for every }p\in \RR[x]_{\leq 2n}.
	\end{equation}
	Every measure $\mu$ satisfying \eqref{moment-measure-cond}
	is called a \textbf{representing measure for $L$}. By \cite[Theorem 2.7.6]{BW11}, whenever $L$ has a representing measure, there is one of the form $\mu=\sum_{j=1}^\ell \delta_{x_j}A_j$, where each $0\neq A_j\in \Sym_p(\RR)$ is positive semidefinite, $\sum_{j=1}^\ell \Rank A_j = \Rank M(n)$ with $M(n)$ being a truncated moment matrix of $(L(x^i))_{i\in [0;2n]}$ (see \eqref{def:moment-matrix})
	and $\delta_{x_j}$ stands for the Dirac measure supported in $x_j$. Such a measure is called \textbf{minimal} because the total sum of the ranks of its matricial masses is minimal among all representing measures for $L$.
	In this case \eqref{moment-measure-cond} is equal to
	\begin{equation}
		\label{moment-measure-cond-Gaussian}
		L(p)=\sum_{j=1}^\ell p(x_j)A_j \quad \text{for every }p\in \RR[x]_{\leq 2n},
	\end{equation}
	and \eqref{moment-measure-cond-Gaussian} is called 
	a \textbf{matricial Gaussian quadrature rule for $L$}.
	The points $x_j$ are called \textbf{atoms} of the measure $\mu$. If $x_1,\ldots,x_\ell$ are pairwise distinct, 
	then for each $j$, the matrix $A_j = \mu(\{x_j\})$ is called the \textbf{mass} of $\mu$ at $x_j$ and its rank is called the \textbf{multiplicity} of $x_j$ in $\mu$, which we denote by 
	$\mult_\mu x_j$. If $x$ is not an atom of $\mu$, then $\mult_\mu x:=0$.\\
	
	The central problem of this paper is the following:\\
	
	\noindent \textbf{Problem.}
	Let $L$ be as in \eqref{def:linear-mapping}
	and assume that it admits a positive matrix-valued representing measure.
	Given $t\in \RR$ and $m\in \NN\cup \{0\}$,
	characterize the existence of a minimal representing measure $\mu$ for $L$ such that $\mult_\mu t=m$.\\
	
	{
		The main result of the paper is the solution to the Problem above.
		
		\begin{theorem}
			\label{th:mainTheorem}
			Let $n, p \in \NN$
			and
			$L:\RR[x]_{\leq 2n}\to \Sym_p(\RR)$
			be a linear operator,
			which admits a representing measure. Fix $t \in \RR$ and $m\in \NN\cup\{0\}$.
			Let $T_i:=L((x-t)^{i})$ for $i\in [0;2n]$ and
			$$\cM_1:=(T_{i+j-2})_{i,j=1}^{n+1},\qquad 
			\cM_2:=(T_{i+j-2})_{\substack{i=1,\ldots,n,\\j=1,\ldots,n+2}}.$$
			Denote by $\mathbf{y}_j$ the $j$--th column of $\cM_1$
			and by $\mathbf{u}_j$ the $j$--th column of $\cM_2$.
			Let 
			\begin{align*} 
				A:=
				\Big\{i\in [p]\colon 
				&\text{There exists }k\in [n]\text{ such that }
				\mathbf{y}_{kp+i}=\sum_{\ell=p+1}^{kp+i-1}\gamma_\ell \mathbf{y}_\ell \text{ for some }\gamma_\ell\in \RR
				\text{ and }\\
				&\mathbf{y}_{(k-1)p+i}\neq \sum_{\ell=1}^{(k-1)p+i-1}\delta_\ell \mathbf{y}_\ell \text{ for all }\delta_\ell\in \RR
				\Big\}
			\end{align*}
			and
			\begin{align*} 
				B:=
				\Big\{i\in [p]\colon 
				&
				\mathbf{u}_{(n+1)p+i}=\sum_{\ell=p+1}^{(n+1)p+i-1}\gamma_\ell \mathbf{u}_\ell\text{ for some }\gamma_\ell\in \RR
				\text{ and }\\
				&{\mathbf{y}_{np+i}\neq \sum_{\ell=1}^{np+i-1}\delta_\ell \mathbf{y}_\ell \text{ for all }\delta_\ell\in \RR}
				\Big\}.
			\end{align*}
			Then the following statements are equivalent:
			\begin{enumerate}
				\item 
				\label{th:mainTheorem-pt1}
				There exists a minimal
				representing matrix measure $\mu$
				for $L$ such that $\mult_\mu t=m$.
				\item 
				\label{th:mainTheorem-pt2}
				$\Card A\leq m\leq \Card(A\cup B)$.
			\end{enumerate}
			
			{
				Moreover, writing 
				$B:=\{i_1,i_2,\ldots,i_{\Card B}\}$
				and
				$\{j_1,j_2,\ldots,j_{p-\Card B}\}:=[p]\setminus B$,
				the representing measure is unique if and only if
				\begin{align}
					\label{cond:uniqueness}
					\begin{split}
						&B=\Big\{i\in [p]\colon 
						\mathbf{y}_{np+i}\neq \sum_{\ell=1}^{np+i-1}\delta_\ell \mathbf{y}_\ell \text{ for all }\delta_\ell\in \RR
						\Big\},\\
						&
						m=\Card(A\cup B),\\
						&\Ker(T_{i+j-2})_{\substack{i=1,\ldots,n,\\
								j=2,\ldots,n+1}}
						\subseteq 
						\Ker 
						(
						\mathbf{u}_{(n+1)p+i_1}\;
						\mathbf{u}_{(n+1)p+i_2}\;
						\ldots\;
						\mathbf{u}_{(n+1)p+i_{\Card B}}
						)^T,\\
						&\Ker(T_{i+j-2})_{\substack{i=1,\ldots,n,\\
								j=2,\ldots,n+1}}
						\subseteq 
						\Ker 
						(
						\mathbf{u}_{(n+1)p+j_1}\;
						\mathbf{u}_{(n+1)p+j_2}\;
						\ldots\;
						\mathbf{u}_{(n+1)p+j_{p-\Card B}}
						)^T. 
					\end{split}
				\end{align}
			}
		\end{theorem}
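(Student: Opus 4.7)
The plan is to first reduce to $t=0$ by translating via $\tilde L(p):=L(p(\cdot+t))$, whose moments are precisely the $T_j$. Then $\cM_1$ is the moment matrix of $\tilde L$ and $\cM_2$ its single-block extension. Since the bijection $\tilde\mu\leftrightarrow\mu$, $\mu(S)=\tilde\mu(S-t)$, preserves minimality, shifts atoms by $t$, and leaves the matricial masses unchanged, it suffices to handle the case $t=0$.

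The key tool is a dictionary between column relations of $\cM_1,\cM_2$ and the kernel of the mass $A_0$ at $0$. For any representing measure $\mu=\sum_j\delta_{x_j}A_j$ and any coefficient vector $\mathbf v\in\Ker\cM_1$ encoding a polynomial vector $f(x)$, the identity
\[
\mathbf v^T\cM_1\mathbf v = \sum_j f(x_j)^T A_j f(x_j)=0
\]
forces $f(x_j)\in\Ker A_j$ at every atom. A column relation of the type used in defining $A$ or $B$, where the sum starts at $\ell=p+1$, corresponds to $f(x)=xg(x)$, i.e.\ a kernel polynomial vanishing at $0$. The constraints at the nonzero atoms then collapse, and one computes $\int g^T\,d\mu\,g = g(0)^T A_0\,g(0)$; combined with the independence hypothesis on $\mathbf y_{(k-1)p+i}$ (respectively on $\mathbf y_{np+i}$ for $B$), this produces a certificate forcing $e_i\in\Ran A_0$ for every $i\in A$, while the analogous analysis for indices outside $A\cup B$ forces $e_i\in\Ker A_0$. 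The remaining indices $i\in B$ correspond to genuine freedom. A dimension count on $\Ker A_0$ then yields the sandwich $\Card A\le m\le\Card(A\cup B)$, which is the necessity direction.

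For sufficiency, given $m$ in that range, pick $C\subseteq B$ with $|C|=m-\Card A$ and set $\Ker A_0:=\Span\{e_i:i\in[p]\setminus(A\cup C)\}$. Construct a flat, positive semidefinite block-Hankel extension of $\cM_1$ to a matrix $M(n+1)$ of the same rank as $\cM_1$, whose level-$(n+1)$ kernel realizes exactly the column relations dictated by $A\cup C$. Applying the block flat-extension theorem (the matricial analogue of Curto--Fialkow, obtained by adapting the method of \cite{ZZ25}) then produces a minimal representing measure for $\tilde L$ with $\mult_\mu 0=m$; minimality is automatic from $\sum_j\Rank A_j=\Rank M(n+1)=\Rank\cM_1$.

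Finally, for uniqueness the measure is completely determined by the flat extension. Condition \eqref{cond:uniqueness} records exactly the situation in which no extension parameter is free: the first equality asserts that $B$ exhausts the independent directions at level $n$, $m=\Card(A\cup B)$ forces $C=B$, and the two kernel inclusions pin down the remaining entries of the last block column of $M(n+1)$ uniquely from $\cM_2$. Conversely, failure of any of the three conditions allows one to exhibit distinct admissible flat extensions, hence distinct measures with the same multiplicity at $t$. The principal obstacle is the explicit construction of the flat extension in the singular case: propagation of the existing kernel relations of $\cM_1$ through the block-Hankel structure interacts nontrivially with the new relations at level $n+1$, and both positivity and the exact rank profile must be verified by an induction on the block column index that carefully extends the positive definite construction of \cite{ZZ25}.
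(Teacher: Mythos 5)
Your high-level strategy coincides with the paper's: translate by $t$ (the paper works with $T_i=L((x-t)^i)$ and the matrices $\cM_1,\cM_2$), and for sufficiency build a rank-preserving one-block extension of the moment matrix whose new block column relation at level $n+1$ has determinant vanishing to order exactly $m+(n+1)p-\Rank M(n)$ at $t$. But the proposal defers precisely the part that constitutes the bulk of the proof. You write that ``the principal obstacle is the explicit construction of the flat extension in the singular case,'' and then do not carry it out. The paper's Section 3 is almost entirely devoted to this: organizing the propagating column relations via the sets $\Depn_\ell(X^i_\cT)$ and the permutation $Q$ (Claims 1--2), constructing $\widehat Z$ so that both the rank condition \eqref{choice-of-hat-Z-semidefinite-case} and the relation \eqref{eq:to-prove-280925} hold and $\widehat Z$ is symmetric (Claim 3), verifying the rank identity \eqref{K-Z-columns-dependancy} via Lemma \ref{lemma-rank-equalities} (Claim 5), and then the kernel-dimension count \eqref{kernel-dimension-sum} feeding into Lemma \ref{le:determinant-of-matrix-valued-polynomial-v2} to get \eqref{3009-det-exp}. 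Even granting all of that, you still need the paper's Claim 6 to rule out $\mult_\mu t>m$; your proposal only arranges $\Ker A_0$ of the intended dimension but gives no argument that the resulting measure cannot assign a larger mass at $t$.

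The necessity direction also has a concrete gap. Your certificate for the lower bound gives, for each $i\in A$, a polynomial $g_i$ with $g_i(x_j)\in\Ker A_j$ at all atoms $x_j\neq t$ and $g_i(t)\notin\Ker A_0$; but $g_i(t)$ is not $e_i$, so the assertion ``$e_i\in\Ran A_0$'' is not what the computation yields, and to conclude $\Rank A_0\geq\Card A$ you must additionally show the vectors $g_i(t)$, $i\in A$, are linearly independent modulo $\Ker A_0$ (this can be done by a leading-column argument, but it is missing). More seriously, the upper bound $m\leq\Card(A\cup B)$ does not follow from an ``analogous analysis'': for $i\notin A\cup B$ there is in general no column relation producing a vector of $\Ker A_0$, and $e_i\in\Ker A_0$ is simply false in general. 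The paper instead proves this half globally: it subtracts $\delta_tA_1$, shows $\Rank M_{\widetilde\cS}(n)=\Rank M(n)-m$, proves via the determinant/zero-counting argument that the analogous set $\widetilde A$ for the reduced sequence is empty (Claim 2 of Section \ref{section:proof-main-reverse}), and then compares the dependence sets of $\cT$ and $\widetilde\cT$ (Claim 3 there) to get $m=\Card A+\Card\Depn_1(X^{n+1}_\cT)\leq\Card(A\cup B)$. Some substitute for this global count is needed; pointwise certificates alone will not produce it.
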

		
		{
			\begin{remark}
				A special case of Theorem~\ref{th:mainTheorem}, corresponding to the situation where $\cM_1$ is positive definite, was established in \cite[Theorem~1.1]{ZZ25}.  
				Under this assumption we have $A=\emptyset$, and  
				\begin{align}
					\label{card-B}
					\begin{split}
						\Card B 
						&= 
						\Rank (T_{i+j-2})_{\substack{i=1,\ldots,n \\ j=2,\ldots,n+1}} - (n-1)p\\
						&=
						\Rank (S_{i+j-1}-tS_{i+j-2})_{i,j=1}^n- (n-1)p,
					\end{split}
				\end{align}
				where the second equality follows by Proposition 
				\ref{linear transform invariance-nc}.\eqref{linear transform invariance-nc-1}, i.e., in the notation of Proposition \ref{linear transform invariance-nc} we have
				$$
				(J_\phi\otimes I_p)^{-T}
				(T_{i+j-2})_{\substack{i=1,\ldots,n \\ j=2,\ldots,n+1}}
				(J_\phi\otimes I_p)^{-1}
				=
				(S_{i+j-1}-tS_{i+j-2})_{i,j=1}^n,
				$$
				where $J_\phi:\RR^n\to \RR^n$.
				Hence,
				$$\Rank (T_{i+j-2})_{\substack{i=1,\ldots,n \\ j=2,\ldots,n+1}}=
				\Rank (S_{i+j-1}-tS_{i+j-2})_{i,j=1}^n.$$
				Using \eqref{card-B}, Theorem~\ref{th:mainTheorem}.\eqref{th:mainTheorem-pt2}  coincides with the statement of \cite[Theorem~1.1.(2)]{ZZ25}
				in the case $\cM_1$ is positive definite.
				We also note that, in order to state \cite[Theorem~1.1]{ZZ25}, it was sufficient to use the localizing matrix  
				$
				(S_{i+j-1} - tS_{i+j-2})_{i,j=1}^n,
				$
				since $\cM_1$ was assumed to be invertible.  
				In the singular case, however, it is more convenient to work with the matrices $\cM_1$ and $\cM_2$, as this makes it possible to capture column dependencies within $\cM_1$ as well. This dependencies are essential for the formulation of Theorem~\ref{th:mainTheorem}.
			\end{remark}
		}
		
		In the case $m=0$, we can add another equivalence to Theorem \ref{th:mainTheorem}.
		
		\begin{theorem}
			\label{co:corollary-atom-avoidance}
			Let $n, p \in \NN$
			and
			$L:\RR[x]_{\leq 2n}\to \Sym_p(\RR)$
			be a linear operator,
			which admits a representing measure. Fix $t \in \RR$.
			Let $T_i:=L((x-t)^{i})$ for $i\in [0;2n]$, 
			\begin{equation}
				\label{051025-1917}
				\cM_1:=(T_{i+j-2})_{i,j=1}^{n+1},\quad
				\cM_2:=(T_{i+j-2})_{i,j=1}^{n}
				\quad\text{and}\quad
				\cM_3:=(T_{i+j})_{i,j=1}^{n}.
			\end{equation}
			Denote by $\mathbf{y}_j$ the $j$--th column of $\cM_1$.
			Let 
			\begin{align*} 
				A:=
				\Big\{i\in [p]\colon 
				&\text{There exists }k\in [n]\text{ such that }
				\mathbf{y}_{kp+i}=\sum_{\ell=p+1}^{kp+i-1}\gamma_\ell \mathbf{y}_\ell \text{ for some }\gamma_\ell\in \RR
				\text{ and }\\
				&\mathbf{y}_{(k-1)p+i}\neq \sum_{\ell=1}^{(k-1)p+i-1}\delta_\ell \mathbf{y}_\ell \text{ for all }\delta_\ell\in \RR
				\Big\}.
			\end{align*}
			The following statements are equivalent:
			\begin{enumerate} 
				\item 
				\label{co:corollary-atom-avoidance-pt1}
				There exists a minimal 
				representing matrix measure $\mu$
				for $L$ such that $\mult_\mu t=0$.
				\item 
				\label{co:corollary-atom-avoidance-pt2}
				$A=\emptyset$.
				\item 
				\label{co:corollary-atom-avoidance-pt3}
				$\Ker \cM_2=\Ker \cM_3$.
			\end{enumerate}
		\end{theorem}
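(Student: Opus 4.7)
The plan is: $(1)\Leftrightarrow(2)$ is immediate from Theorem~\ref{th:mainTheorem} by setting $m=0$ (the sandwich $\Card A\leq m\leq \Card(A\cup B)$ collapses to $\Card A=0$, with the two definitions of $A$ coinciding); the new content is $(2)\Leftrightarrow(3)$, which I would establish by a short linear-algebraic argument exploiting $\cM_1\succeq 0$ together with the block Hankel shift.

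The key preliminary is a forward-propagation lemma: for any $\mathbf{d}\in\Ker\cM_1$ supported in the first $np$ scalar coordinates, its right-shift by one block column (zeroing the first $p$ entries and moving each remaining block one step to the right) also lies in $\Ker\cM_1$. The short proof picks any representing measure $\mu=\sum_k A_k\delta_{x_k}$ and associates to $\mathbf{d}$ the polynomial $q_\mathbf{d}(x)=\sum_{j}d^{(j)}(x-t)^{j-1}$, where $d^{(j)}\in\RR^p$ is the $j$-th block of $\mathbf{d}$; one has $\mathbf{d}^T\cM_1\mathbf{d}=\sum_k q_\mathbf{d}(x_k)^T A_k q_\mathbf{d}(x_k)$, and the right-shift $\mathbf{d}'$ satisfies $q_{\mathbf{d}'}(x)=(x-t)q_\mathbf{d}(x)$, so $(\mathbf{d}')^T\cM_1\mathbf{d}'=\sum_k (x_k-t)^2 q_\mathbf{d}(x_k)^T A_k q_\mathbf{d}(x_k)$; vanishing of each term of the first sum forces vanishing of each term of the second, and $\cM_1\succeq 0$ upgrades this to $\mathbf{d}'\in\Ker\cM_1$. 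The same device (with $q_{\mathbf{v}'}$ in place of $q_\mathbf{d}$) yields the unconditional inclusion $\Ker\cM_2\subseteq\Ker\cM_3$, and upgrades ``$\cM_2\mathbf{v}'=0$'' to ``$\cM_1\Phi_0(\mathbf{v}')=0$'' (analogously for $\cM_3$ via the prepend-zero map $\Phi$), yielding the reformulations $\Ker\cM_2=\{\mathbf{v}'\in\RR^{np}:\Phi_0(\mathbf{v}')\in\Ker\cM_1\}$ and $\Ker\cM_3=\{\mathbf{v}'\in\RR^{np}:\Phi(\mathbf{v}')\in\Ker\cM_1\}$.

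With these tools, $(3)\Rightarrow(2)$ is a direct contradiction. Assume $i\in A$ is witnessed by $k\in[n]$ and coefficients $\gamma_\ell$. The vector $\mathbf{c}\in\RR^{(n+1)p}$ with $c_{kp+i}=-1$, $c_\ell=\gamma_\ell$ for $p+1\leq\ell\leq kp+i-1$, and $c_\ell=0$ otherwise lies in $\Ker\cM_1$ and is of the form $\Phi(\mathbf{c}')$, so $\mathbf{c}'\in\Ker\cM_3=\Ker\cM_2$; then $\Phi_0(\mathbf{c}')\in\Ker\cM_1$ unpacks to the identity $\mathbf{y}_{(k-1)p+i}=\sum_{\ell=1}^{(k-1)p+i-1}\gamma_{\ell+p}\mathbf{y}_\ell$, contradicting the defining property of $i\in A$. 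For $(2)\Rightarrow(3)$, since $\Ker\cM_2\subseteq\Ker\cM_3$ is unconditional, it suffices to show $\Rank\cM_2=\Rank\cM_3$; positivity of $\cM_1$ gives $\Rank\cM_2=\dim\Span\{\mathbf{y}_1,\ldots,\mathbf{y}_{np}\}$ and $\Rank\cM_3=\dim\Span\{\mathbf{y}_{p+1},\ldots,\mathbf{y}_{(n+1)p}\}$. For $(k,i)\in[n]\times[p]$ let $d(k-1,i)\in\{0,1\}$ indicate whether $\mathbf{y}_{(k-1)p+i}$ is independent of its predecessors in the ordering starting from $\mathbf{y}_1$, and $d'(k,i)$ the analogous indicator for the ordering starting from $\mathbf{y}_{p+1}$. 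The forward-propagation lemma, applied to a leading relation $\mathbf{y}_{(k-1)p+i}=\sum c_j\mathbf{y}_j$, yields $d(k-1,i)=0\Rightarrow d'(k,i)=0$ unconditionally, while $A=\emptyset$ is exactly the converse $d(k-1,i)=1\Rightarrow d'(k,i)=1$. Hence $d(k-1,i)=d'(k,i)$ for all $(k,i)\in[n]\times[p]$, and summing these indicators yields equality of the two dimensions.

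The main obstacle is formulating the forward-propagation lemma cleanly: the polynomial calculation is short, but one must carefully track the interaction between the scalar indexing $\ell=(j-1)p+k'$ of a vector in $\RR^{(n+1)p}$ and the block Hankel convention, and verify the range condition $\ell+p\leq (n+1)p$ whenever the shift is invoked. Once this lemma and the $\Phi,\Phi_0$ reformulations are in place, the rest of the argument is bookkeeping on the indicators $d,d'$.
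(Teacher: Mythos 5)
Your proposal is correct, and for the equivalence involving condition (3) it takes a genuinely different route from the paper. Both treatments dispose of $(1)\Leftrightarrow(2)$ by invoking Theorem \ref{th:mainTheorem} with $m=0$. For the remaining link, the paper closes the cycle as $(1)\Rightarrow(3)\Rightarrow(2)$: for $(1)\Rightarrow(3)$ it uses the hypothesis that the minimal measure avoids $t$ to integrate $(x-t)^{-1}$ and $(x-t)^{-2}$, building the enlarged positive semidefinite Hankel matrix $\widetilde{\cM}_1=(T_{i+j-2})_{i,j=0}^{n+2}$ in which $\cM_2$ and $\cM_3$ sit as interior principal submatrices, and then applies the extension principle (Lemma \ref{extension-principle}) twice to get both kernel inclusions; its $(3)\Rightarrow(2)$ is essentially identical to yours. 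You instead prove $(2)\Leftrightarrow(3)$ directly inside $\cM_1$, with no Laurent extension: your forward-propagation lemma (the vector analogue of block-recursive generation, proved via the quadratic-form identity $\mathbf{d}^T\cM_1\mathbf{d}=\sum_k q_{\mathbf d}(x_k)^TA_kq_{\mathbf d}(x_k)$ for a finitely atomic representing measure, which exists by Theorem \ref{th:Hamburger-matricial}) yields the unconditional inclusion $\Ker\cM_2\subseteq\Ker\cM_3$ together with the implication $d(k-1,i)=0\Rightarrow d'(k,i)=0$, while $A=\emptyset$ supplies the converse implication; summing the indicators gives $\Rank\cM_2=\Rank\cM_3$, which with the inclusion and rank--nullity forces equality of kernels. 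I checked the two points your argument leans on: for $\cM_1=V^TV\succeq 0$ the rank of a principal submatrix equals the dimension of the span of the corresponding full columns $\mathbf{y}_j=V^Tv_j$ (because $V^T$ is injective on $\cC(V)$), and the shift of a kernel vector supported in the first $np$ coordinates stays in $\RR^{(n+1)p}$; both are sound. What your route buys is a self-contained linear-algebraic proof of $(2)\Leftrightarrow(3)$ that never passes through statement (1) and cleanly isolates which inclusion of kernels is automatic and which one encodes the absence of an atom at $t$; what the paper's route buys is brevity, since the Laurent extension plus Lemma \ref{extension-principle} gives both inclusions in a few lines once $(1)$ is available.
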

		
		Using Theorem \ref{co:corollary-atom-avoidance} we obtain \cite[Theorem 3.3]{Sim06}, which is a solution
		to a strong truncated matrix Hamburger moment problem.
		
		\begin{corollary}
			\label{co:Simonov-psd}
			Let $n_1,n_2, p \in \NN$ and
			$S_i\in \Sym_p(\RR)$ for $i\in [-2n_1;2n_2]$. Then the following statements are equivalent:
			\begin{enumerate} 
				\item 
				\label{co:Simonov-psd-pt00}
				There exists a measure $\mu$
				such that $S_i=\int_\RR x^i d\mu$ for $i\in [-2n_1;2n_2]$.
				\item 
				\label{co:Simonov-psd-pt0}
				There exists a finitely atomic measure $\mu$
				such that $S_i=\int_\RR x^i d\mu$ for $i\in [-2n_1;2n_2]$.
				\item 
				\label{co:Simonov-psd-pt1}
				There exists a minimal representing measure $\mu$
				such that 
				$$S_i=\int_\RR x^i d\mu \quad \text{for } i\in [-2n_1;2n_2].$$
				\item
				\label{co:Simonov-psd-pt3}
				$(S_{i+j})_{i,j=-n_1}^{n_2}$ is positive semidefinite
				and $\Ker (S_{i+j})_{i,j=-n_1}^{n_2-1}=\Ker (S_{i+j})_{i,j=-n_1+1}^{n_2}$.
				\item 
				\label{co:Simonov-psd-pt2}
				$(S_{i+j})_{i,j=-n_1}^{n_2}$ is positive semidefinite
				and for every sequence $\{v_k\}_{k=0}^{n_1+n_2-1} \subset \RR^p$
				\begin{align*}
					\sum_{i,j=-n_1}^{n_2-1}v_{j+n_1}^TS_{i+j}v_{i+n_1} = 0 \quad \text{if and only if} \quad \sum_{i,j=-n_1}^{n_2-1}v_{j+n_1}^TS_{i+j+2}v_{i+n_1} = 0.
				\end{align*}
			\end{enumerate}
		\end{corollary}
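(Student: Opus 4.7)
The plan is to translate the bilateral problem into an ordinary truncated matrix Hamburger problem for the operator $\tilde L:\RR[x]_{\leq 2N}\to \Sym_p(\RR)$, $N:=n_1+n_2$, defined by $\tilde L(x^k):=S_{k-2n_1}$, and then invoke Theorem~\ref{co:corollary-atom-avoidance}. Under the shift $i\mapsto i-n_1$ the moment matrix of $\tilde L$ identifies with $(S_{i+j})_{i,j=-n_1}^{n_2}$, and the matrices $\cM_2,\cM_3$ of Theorem~\ref{co:corollary-atom-avoidance} applied to $\tilde L$ with $t=0$ identify with $(S_{i+j})_{i,j=-n_1}^{n_2-1}$ and $(S_{i+j})_{i,j=-n_1+1}^{n_2}$, respectively. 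The bridge between measures is the bijection sending $\tilde\mu=\sum_j \delta_{x_j}B_j$ with all $x_j\neq 0$ to $\mu:=\sum_j \delta_{x_j}(x_j^{2n_1}B_j)$; a one-line computation gives $\int x^i\,d\mu=\sum_j x_j^{i+2n_1}B_j=S_i$ for $i\in[-2n_1;2n_2]$, and since $\Rank(x_j^{2n_1}B_j)=\Rank B_j$ when $x_j\neq 0$, the sums of ranks of matricial masses agree, so the bijection matches minimal representing measures of $\tilde L$ avoiding $0$ with minimal representing measures of the strong problem.

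The implications $(3)\Rightarrow (2)\Rightarrow (1)$ are immediate, and $(4)\Leftrightarrow (5)$ is the standard reformulation of kernel equality through the quadratic form associated with a positive semidefinite matrix. For $(1)\Rightarrow (4)$, any representing measure $\mu$ satisfies $\mu(\{0\})=0$ since the negative moment $S_{-2n_1}=\int x^{-2n_1}\,d\mu$ is finite; positive semidefiniteness of $(S_{i+j})_{i,j=-n_1}^{n_2}$ then follows from the identity $v^T (S_{i+j})_{i,j=-n_1}^{n_2} v = \int p_v(x)^T\,d\mu(x)\,p_v(x)\geq 0$ with $p_v(x):=\sum_{i=-n_1}^{n_2}x^i v_i$, and the kernel equality holds because, writing $q_v(x):=\sum_{i=-n_1}^{n_2-1}x^iv_i$, the conditions $q_v\equiv 0$ and $x\,q_v\equiv 0$ $\mu$-almost everywhere coincide once $\mu(\{0\})=0$. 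For $(4)\Rightarrow (3)$, the hypotheses translate, under the identifications above, to positive semidefiniteness of the moment matrix of $\tilde L$ together with the kernel condition of Theorem~\ref{co:corollary-atom-avoidance}.\eqref{co:corollary-atom-avoidance-pt3}; once $\tilde L$ is known to admit a representing measure, that theorem yields a minimal $\tilde\mu$ with $\mult_{\tilde\mu}0=0$, and the bijection above delivers the required minimal $\mu$.

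The main obstacle is therefore to establish that $\tilde L$ admits a representing measure under hypothesis~(4) alone, so that Theorem~\ref{co:corollary-atom-avoidance} applies. My preferred approach is a flat extension argument: the kernel equality combined with positive semidefiniteness allows one to build, column block by column block, a positive semidefinite extension of $(S_{i+j})_{i,j=-n_1}^{n_2}$ whose rank stabilizes at $\Rank(S_{i+j})_{i,j=-n_1}^{n_2}$, after which a matrix version of the Curto-Fialkow flat extension theorem produces a finitely atomic representing measure. An alternative route is a perturbation argument: replace each $S_i$ by $S_i+\varepsilon T_i$ for moments $\{T_i\}$ of a fixed finitely atomic reference measure with support bounded away from $0$, apply \cite{ZZ25} in the resulting positive definite case, and extract a weak limit of minimal measures as $\varepsilon\to 0$, using the kernel condition to control atom localization in the limit.
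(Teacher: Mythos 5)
Your overall architecture (shift to $\widetilde S_k:=S_{k-2n_1}$, the bijection $\widetilde\mu\mapsto\sum_j\delta_{x_j}(x_j^{2n_1}B_j)$ between minimal measures avoiding $0$ and minimal measures for the strong problem, reduction of the kernel condition to Theorem~\ref{co:corollary-atom-avoidance} with $t=0$, and the standard quadratic-form reformulation for $(4)\Leftrightarrow(5)$) coincides with the paper's. Your direct measure-theoretic proof of $(1)\Rightarrow(4)$ via $\mu(\{0\})=0$ is a legitimate variant of what the paper does (the paper routes the kernel equality through the implication $\eqref{co:corollary-atom-avoidance-pt1}\Rightarrow\eqref{co:corollary-atom-avoidance-pt3}$ of Theorem~\ref{co:corollary-atom-avoidance}, which itself uses the extension principle on a moment matrix enlarged by the negative moments $T_{-2},T_{-1}$), provided you justify, for matrix measures, that $v^TMv=0$ forces the vector polynomial to lie in the kernel of the Radon--Nikodym density a.e.

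The genuine gap is exactly the step you flag as ``the main obstacle'': showing that $\widetilde L$ admits a representing measure under hypothesis~(4). Neither of your two sketches closes it. The flat-extension route is not carried out — the whole difficulty is precisely whether a rank-preserving psd Hankel extension exists, and asserting that one can be built ``column block by column block'' from the kernel equality is the claim to be proved, not an argument. The perturbation route is also problematic: adding $\varepsilon T_i$ need not preserve the structure you need, and extracting a weak limit of $\varepsilon$-dependent finitely atomic measures for a truncated problem does not automatically represent the top moment $S_{2n_2}$ (mass can escape to infinity and the limit measure may only dominate, not attain, the highest even moment). What makes the step a one-liner — and what the paper uses — is Theorem~\ref{th:Hamburger-matricial} ([BW11, Theorem 2.7.6]), already stated in the preliminaries: a representing measure for $\widetilde\cS$ exists iff $M_{\widetilde\cS}(N)\succeq 0$ and $\cC\big(\col(\widetilde S_{N+i})_{i\in[N]}\big)\subseteq\cC\big(M_{\widetilde\cS}(N-1)\big)$, $N=n_1+n_2$. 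Since $(S_{i+j})_{i,j=-n_1}^{n_2-1}$ and $(S_{i+j})_{i,j=-n_1+1}^{n_2}$ are symmetric psd, the hypothesis $\Ker(S_{i+j})_{i,j=-n_1}^{n_2-1}=\Ker(S_{i+j})_{i,j=-n_1+1}^{n_2}$ gives equality of their column spaces (orthogonal complements of the kernels), and $\col(\widetilde S_{N+i})_{i\in[N]}$ is a block column of $(\widetilde S_{i+j})_{i,j=1}^{N}=(S_{i+j})_{i,j=-n_1+1}^{n_2}$, while $M_{\widetilde\cS}(N-1)=(S_{i+j})_{i,j=-n_1}^{n_2-1}$; the required column inclusion follows immediately. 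With that inserted, the rest of your argument goes through.
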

		
		{
			In \cite[Theorem 1.4]{BKRSV20}, the authors solved the scalar version  of the Problem in the nonsingular case (i.e., $p=1$ in \eqref{def:linear-mapping} and the moment matrix $M(n)$ as in \eqref{def:moment-matrix} is positive definite) in terms of symmetric determinantal representations involving moment matrices using convex analysis and algebraic geometry.
			For an alternative proof using moment theory, and an extension to finitely many prescribed atoms, see \cite{NZ+}.
			In \cite[Theorem 1.1]{ZZ25}, we extended \cite[Theorem 1.4]{BKRSV20} to the general matrix case (i.e., $p\in \NN$). 
			We remark that, in the scalar setting, the restriction to positive definite moment matrices \( M(n) \) is natural. Specifically, when \( M(n) \) is positive semidefinite but not positive definite, the minimal representing measure is uniquely determined \cite[Theorems~3.9 and~3.10]{CF91}. This uniqueness property does not extend to the matrix-valued case, whence a formulation of the problem with positive semidefinite \( M(n) \) remains relevant. Furthermore, this variant of the problem is technically more intricate and is addressed in the present paper.
			
			In \cite{FKM24}, the authors described, for each \( t \in \mathbb{R} \), the set of all possible masses at \( t \) among all representing measures for \( L \), addressing a question related to the Problem above. The multivariate analogue of this problem was investigated in \cite{MS23+, MS23++}. The main distinction of our work lies in the fact that we focus on \textit{minimal} representing measures with a \textit{fixed rank} of the mass at \( t \).
			
			The works \cite{DD02, DLR96, DS03} focus on the computation of atoms and masses in matricial Gaussian quadrature rules, where the moment \( S_{2n+1} \) is fixed, and consequently, the minimal representing measure is unique. The corresponding formulas are expressed in terms of the zeros of the associated orthogonal matrix polynomial. A distinctive feature of our results is that we do not fix \( S_{2n+1} \) \textit{a priori}. Instead, we characterize the existence of a suitable \( S_{2n+1} \) that yields a minimal representing measure containing a prescribed atom with a prescribed multiplicity. In the proof, we explicitly construct such an \( S_{2n+1} \) with the required properties, ensuring that the extended moment matrix \( M(n+1) \) satisfies \( \Rank M(n+1) = \Rank M(n) \) and admits an appropriate block column relation (see Subsection~\ref{subsec:evaluation}).
			
			The techniques employed in \cite{Sim06} to establish Theorem~\ref{co:Simonov-psd} rely on advanced operator-theoretic methods, involving the analysis of self-adjoint extensions of a certain linear operator that is not necessarily defined on the entire finite-dimensional Hilbert space of vector-valued Laurent polynomials. In contrast, the present paper provides a constructive, linear-algebraic proof, in the sense that the representing measures can be obtained explicitly by following the steps outlined in the proof of Theorem~\ref{th:mainTheorem}.
			
		}
		
		\subsection{Reader's guide} 
		In Section \ref{sec:prel} we introduce the notation and some preliminary results.
		In Section \ref{sec:proofOfMainTheorem} we prove
		the implication $\eqref{th:mainTheorem-pt2} \Rightarrow \eqref{th:mainTheorem-pt1}$ of Theorem \ref{th:mainTheorem}.
		In the last paragraph of the section we also prove the moreover part of Theorem \ref{th:mainTheorem}.
		In Section \ref{section:proof-main-reverse} 
		we prove the remaining implication $\eqref{th:mainTheorem-pt1} \Rightarrow \eqref{th:mainTheorem-pt2}$.
		Section \ref{section:proof-theorem-multiplicity-0} is devoted to the proof of Theorem \ref{co:corollary-atom-avoidance},
		while Section \ref{section:corollary} to Corollary \ref{co:Simonov-psd}.
		Finally, in Section \ref{sec:example} 
		we demonstrate the statement of Theorem \ref{th:mainTheorem}
		on a numerical example.

	\section{Preliminaries}
	\label{sec:prel}
	
	Let $m,m_1,m_2\in \NN$.
	We write $M_{m_1\times m_2}(\RR)$ for the set of $m_1\times m_2$ real matrices
	and $M_m(\RR)\equiv M_{m\times m}(\RR)$ for short.
	For a matrix $A\in M_{m_1\times m_2}(\RR)$
	we call the linear span of its columns
	a $\textbf{column space}$ and denote it by $\cC(A)$.
	We denote by $I_{m}$ the identity $m\times m$ matrix and by $\mathbf{0}_{m_1\times m_2}$ the zero
	$m_1\times m_2$ matrix, while $\mathbf{0}_m\equiv \mathbf{0}_{m\times m}$ for short.
	We use $M_m(\RR[x])$ to denote $m\times m$ matrices over $\RR[x]$. The elements of $M_m(\RR[x])$ 
	are called \textbf{matrix polynomials}.
	
	Let $p\in \NN$. 
	For $A\in \Sym_p(\RR)$ the notation $A\succeq 0$ (resp.\ $A\succ 0$) means $A$ is positive semidefinite (psd) (resp.\ positive definite (pd)).
	We use $\Sym^{\succeq 0}_p(\RR)$ for the subset  of all psd matrices in $\Sym_p(\RR)$.
	
	Given a polynomial $p(x)\in \RR[x]$, we write 
	$\cZ(p(x)):=\{x\in \RR\colon p(x)=0\}$ for the set of its zeros
	{and by $\cZ_{\RR}(p(x)):=\cZ(p(x))\cap\RR$ the set of its real zeros. For $a\in \cZ_{\RR}(p(x))$ we denote by $\mult_{p(x)} a$ the vanishing order of $a$ as a zero of $p(x)$.
		
		{Let $A_k$, $k\in [i;j]$ be given matrices. We use \\
			$$\row(A_k)_{k\in [i;j]}\equiv \row(A_i,A_{i+1},\ldots,A_j):=\begin{pmatrix}
				A_i & A_{i+1} & \cdots & A_j
			\end{pmatrix}$$ \\
			and
			$$
			\col(A_k)_{k\in [i;j]}\equiv \col(A_i,A_{i+1},\ldots,A_j)
			:=
			\begin{pmatrix}
				A_i \\ A_{i+1} \\ \vdots \\ A_j
			\end{pmatrix}
			$$ \\
			for the row vector and the column vector with entries $A_k$,
			respectively.}

	\subsection{Matrix measures}
	\label{subsec:matrix-measure}
	Let $\Bor(\RR)$
	be the Borel $\sigma$-algebra of $\RR$. 
	We call 
	$$\mu=(\mu_{ij})_{i,j=1}^p:\Bor(\RR)\to \Sym_p(\RR)$$ 
	a $p\times p$ \textbf{Borel matrix-valued measure} supported on $\RR$
	(or \textbf{positive $\Sym_p(\RR)$-valued measure})
	if
	\begin{enumerate}
		\item 
		$\mu_{ij}:\Bor(\RR)\to \RR$ 
		is a real measure for every $i,j\in [p]$ and
		\smallskip
		\item
		$\mu(\Delta)\succeq 0$ for every $\Delta\in \Bor(\RR)$.
	\end{enumerate} 
	
	A positive $\Sym_p(\RR)$-valued measure $\mu$ is \textbf{finitely atomic} if there exists a finite set $M\in \Bor(\RR)$
	such that
	$\mu(\RR\setminus M)=\mathbf 0_{p}$
	or
	equivalently,
	$\mu=\sum_{j=1}^\ell \delta_{x_j}A_j$
	for some $\ell\in \NN$, $x_j\in \RR$, $A_j\in \Sym_p^{\succeq 0}(\RR)$.
	We say $\mu$ is \textbf{$r$--atomic} if $\sum_{j=1}^{\ell}\Rank A_j = r$.
	
	Let $\mu$ be a positive $\Sym_p(\RR)$-valued measure
	and 
	$\tau:=\tr(\mu)=\sum_{i=1}^p \mu_{ii}$ denote its trace measure. 
	A polynomial $f\in \RR[x]$ is $\mu$-integrable if 
	$f\in L^1(\tau)$. 
	We define its integral by
	$$
	\int_\RR f\;d\mu
	:=
	\Big(\int_\RR f\; d\mu_{ij}\Big)_{i,j=1}^p.
	$$

	\subsection{Riesz mapping}
	\label{subsec:Riesz}
	One can define $L$ as in \eqref{def:linear-mapping}
	by a sequence of its values on monomials $x^i$, $i\in [0;2n]$. 
	Throughout the paper we will denote these values
	by $S_i:=L(x^i)$. 
	If
	$\mathcal S:=(S_0,S_1,\ldots,S_{2n})\in (\Sym_p(\RR))^{2n+1}$
	is given, then we denote the corresponding linear mapping on 
	$\RR[x]_{\leq 2n}$ by $L_{\mathcal S}$ and call it a \textbf{Riesz 
		mapping of $\mathcal S$}. 

	\subsection{Moment matrix and its column dependencies}
	For $n\in \NN$
	and 
	\begin{equation}
		\label{def:sequence}
		\mathcal S\equiv \mathcal S^{(2n)}:=(S_0,S_1,\ldots,S_{2n})\in (\Sym_p(\RR))^{2n+1},
	\end{equation}
	we denote by
	\begin{equation}
		\label{def:moment-matrix}
		M(n)\equiv M_\cS(n):= \begin{pmatrix}S_{i+j-2}\end{pmatrix}_{i,j = 1}^{n+1} =
		\kbordermatrix{
			& \mathit{1} & X & X^2 & \cdots & X^n\\
			\mathit{1} & S_0 & S_1 & S_2 & \cdots & S_n\\[0.2em]
			X & S_1 & S_2 & \iddots & \iddots & S_{n+1}\\[0.2em]
			X^2 & S_2 & \iddots & \iddots & \iddots & \vdots\\[0.2em]
			\vdots & \vdots 	& \iddots & \iddots & \iddots & S_{2n-1}\\[0.2em]
			X^n & S_n & S_{n+1} & \cdots & S_{2n-1} & S_{2n}
		}
	\end{equation}
	the corresponding 
	\textbf{$n$--th truncated moment matrix.}
	We write 
	\begin{equation} 
		\label{def:columns-mm}
		X^i := \col(S_{i+\ell})_{\ell\in [0;n]}.
	\end{equation}
	
	\begin{remark}
	Further, we will also use $X^i_{\cS}$ to denote the $i$--th block column of the largest moment matrix, fully determined by $\cS$.
	\end{remark}
	In the notation \eqref{def:columns-mm}, we have 
	$M(n)=\row(X^i)_{i\in [0;n]}$.
	Let us denote the columns of each block $X^i$ by $x_j^{(i)}$, i.e.,
	\begin{equation}
	\label{columns-of-Yi}
	X^i = \begin{pmatrix}
		x_1^{(i)} & x_2^{(i)} & \cdots & x_p^{(i)}    
	\end{pmatrix} = \row(x^{(i)}_j)_{j\in [p]}.
	\end{equation}
	
	For $i\in [0;n]$ let 
	$\Dep(X^i)\equiv \Dep(X_{\cS}^i)$
	be the set of all indices $j\in [p]$ for which the column $x_j^{(i)}$ is in the span of the previous columns of the matrix $M(n)$, i.e.,
	\begin{align}
	\label{eq:defDep-of-Y-i}
	\Dep(X^i) 
	&:= \left \{ j \in [p] \colon 
	x^{(i)}_j\in
	\cC\begin{pmatrix}
		\row(X^k)_{k\in [0;i-1]}
		&
		\row(x_k^{(i)})_{k\in [j-1]}
	\end{pmatrix}
	\right\}.
	\end{align}
	Similarly, for $i\in [n]$ let
	$\Dep_1(X^i)\equiv \Dep_1(X_{\cS}^i)$
	be the set of all indices $j\in [p]$ for which the column $x_j^{(i)}$ is in the span of all previous columns of the matrix $M(n)$ \textit{starting from the block column} $X^1$, i.e.,
	\begin{align}
	\label{eq:defDep-of-Y-i-alternative}
	\Dep_1(X^i) 
	&:= \left \{ j \in [p] \colon 
	x^{(i)}_j\in
	\cC\begin{pmatrix}
		\row(X^k)_{k\in [i-1]}
		&
		\row(x_k^{(i)})_{k\in [j-1]}
	\end{pmatrix}
	\right\}.
	\end{align}
	Define $\Dep_1(X^0) := \emptyset$. Note that $\Dep_1(X^i) \subseteq \Dep(X^i)$ for each $i\in [0;n]$.
	{
	Further, let 
	\begin{align}
		\label{eq:defDepnew-of-Y-i}
		\begin{split}
			\Depn(X^{0}) &:= 	\Dep(X^0), \\
			\Depn(X^i) &:= \Dep(X^i) \setminus \Dep(X^{i-1}) \quad \text{for} \quad i\in [n],\\
			\Depn_1(X^0) &:= \emptyset,\\
			\Depn_1(X^i) &:= \Dep_1(X^i) \setminus \Dep(X^{i-1}) \quad \text{for} \quad i\in [n],\\
			\Depn_0(X^i) &:= \Depn(X^i) \setminus \Depn_1(X^i) \quad \text{for} \quad i\in [0;n].
		\end{split}
	\end{align}

	\subsection{Evaluation of 
		a matrix polynomial on $M(n)$}
	\label{subsec:evaluation}
	Let $\cS$ be a sequence as in \eqref{def:sequence}
	and $M(n)$ its corresponding moment matrix.
	Given a matrix polynomial 
	\begin{equation} 
		\label{def:matrix-poly}
		P(x)=\sum_{i=0}^n x^i P_i \in M_p(\RR[x]),
	\end{equation}
	the \textbf{evaluation} $P(X)$
	on $M(n)$ is a matrix, obtained by 
	replacing each monomial of $P$ by the corresponding column of $M(n)$ and multiplying it 
	with the matrix coefficients $P_i$ from the right, i.e., 
	$$P(X) \equiv P(X_\cS)
	:=
	\sum_{i=0}^n X_{\cS}^i P_i
	{=
		X_{\cS}^0 P_0+
		X_{\cS}^1 P_1+
		\cdots+
		X_{\cS}^n P_n
	}
	\in M_{(n+1)p\times p}(\RR),$$
	where $X^i$
	are as in \eqref{def:columns-mm} above.
	If $P(X)=\mathbf{0}_{(n+1)p\times p}$, then we say $P(x)$ is a \textbf{block column relation} of $M(n)$.
	
	Given a matrix polynomial $P(x)$
	as in \eqref{def:matrix-poly}, we write 
	\begin{equation}
		\label{def:coefficients}
		\coef(P(x)):=\col(P_i)_{i\in [0;n]}
	\end{equation} 
	to denote the vector of its matrix coefficients.

	A moment matrix $M(n)$ is 
	\textbf{block--recursively generated} 
	if for every block column relation 
	$P(x) = \sum_{i=0}^{j}x^{i}P_{i} \in M_p(\RR[x])$, $0\leq j<n$, of $M(n)$,
	the matrix polynomial $(xP)(x)=\sum_{i=0}^{j}x^{i+1}P_i$ is also a block column relation of $M(n)$.
	

	\subsection{Solution to the truncated matricial Hamburger moment problem}
	
	\begin{theorem}
		[{\cite[Theorem 2.7.6]{BW11}}]
		\label{th:Hamburger-matricial}
		Let $n, p \in \NN$ and let $\mathcal S:=(S_0,S_1,\ldots,S_{2n})\in (\Sym_p(\RR))^{2n+1}$
		be a sequence with a moment matrix $M(n)$.
		Then the following statements are equivalent:
		\begin{enumerate}
			\item 
			There exists a  
			representing matrix measure for $\mathcal S$.
			\item 
			There exists a $(\Rank M(n))$--atomic 
			representing matrix measure for $\mathcal S$.
			\item 
			$M(n)$
			is positive semidefinite
			and 
			$
			\cC\big(
			\col(S_{n+i})_{i\in [n]}
			\big)
			\subseteq
			\cC(M(n-1)).
			$
		\end{enumerate}
	\end{theorem}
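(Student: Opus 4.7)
The implication (2) $\Rightarrow$ (1) is immediate. For (1) $\Rightarrow$ (3), suppose $\mu$ is a representing measure. For any $v = \col(v_i)_{i\in[0;n]} \in \RR^{(n+1)p}$, set $v(x) = \sum_{i=0}^n x^i v_i$; then
$$v^T M(n) v = \sum_{i,j=0}^n v_i^T S_{i+j} v_j = \int_\RR v(x)^T d\mu(x)\, v(x) \ge 0,$$
proving $M(n) \succeq 0$. For the column inclusion, extend the moments of $\mu$ one step to obtain $M(n+1)\succeq 0$ and apply Smul'yan's lemma twice: to $M(n+1)$, which puts the full column $F := \col(S_{n+i})_{i\in[n+1]}$ in $\cC(M(n))$, and to $M(n)$, which puts $B := \col(S_{n+i-1})_{i\in[n]}$ in $\cC(M(n-1))$. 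Restricting the relation $F = M(n) W$ to the top $np$ rows and combining with $B \in \cC(M(n-1))$ yields $\col(S_{n+i})_{i\in[n]} \in \cC(M(n-1))$.

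The content of the theorem is the implication (3) $\Rightarrow$ (2), which I would prove via a flat-extension argument. The key step is: given $M(n)\succeq 0$ satisfying the column inclusion, construct $S_{2n+1}, S_{2n+2} \in \Sym_p(\RR)$ so that the extended $M(n+1)$ is psd with $\Rank M(n+1) = \Rank M(n) =: r$. Writing $M(n+1) = \left(\begin{smallmatrix} M(n) & F \\ F^T & S_{2n+2}\end{smallmatrix}\right)$ with $F = \col(S_{n+i})_{i\in[n+1]}$, flatness demands $F = M(n)\widetilde W$ and $S_{2n+2} = F^T \widetilde W$. The hypothesis provides $W$ with $M(n-1) W = \col(S_{n+i})_{i\in[n]}$, matching the top $np$ block rows of $F$; the bottom block $S_{2n+1}$ is then forced by the consistency requirement of $F = M(n)\widetilde W$, and a Schur-complement computation verifies symmetry of $S_{2n+1}$, positive semidefiniteness of $M(n+1)$, and the rank identity.

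Once a flat extension exists, the column-inclusion condition propagates to $M(n+1)$, and iterating yields an infinite psd sequence $(S_i)_{i\ge 0}$ with all $M(k)$, $k \ge n$, of rank $r$. From the flat $M(n+1)$ one reads off a monic matrix polynomial $P(x) \in M_p(\RR[x])$ of degree $n+1$ that is a block column relation of $M(n+1)$; the real zeros of $\det P(x)$ give the atoms $x_1,\ldots,x_\ell$ of the measure, and the matricial masses $A_j \in \Sym_p^{\succeq 0}(\RR)$ are recovered from the block Vandermonde system $S_i = \sum_{j=1}^\ell x_j^i A_j$ for $i\in[0;2n]$, with $\sum_j \Rank A_j = r$ obtained by a dimension count.

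The main obstacle is the flat-extension step itself: showing that the forced $S_{2n+1}$ is symmetric and that the resulting $M(n+1)$ is simultaneously psd and rank-preserving. These properties do not follow automatically and must be established through the joint use of positivity of $M(n)$ (which via Smul'yan contributes $B \in \cC(M(n-1))$) together with the hypothesized column-space inclusion for the shifted column $\col(S_{n+i})_{i\in[n]}$. Once this flat-extension lemma is secured, the iteration and the extraction of the atomic measure proceed by standard arguments.
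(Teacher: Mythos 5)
The paper does not prove this statement; it quotes it verbatim from \cite[Theorem 2.7.6]{BW11}, so there is no internal proof to compare against. Judged on its own terms, your outline follows the standard flat-extension strategy, but the step you yourself flag as ``the main obstacle'' is exactly where the argument fails as written, and it is not settled by ``a Schur-complement computation.'' Taking $\widetilde W = \col(W,\mathbf{0})$ with $M(n-1)W = \col(S_{n+i})_{i\in[n]}$, the bottom block row of $M(n)\widetilde W$ forces $S_{2n+1} = \row(S_{n+i-1})_{i\in[n]}\,W$, and this matrix is \emph{not} symmetric in general. Concretely, for $p=2$, $n=1$, $S_0 = I_2$, $S_1 = \diag(1,0)$, $S_2 = \left(\begin{smallmatrix}2&1\\1&2\end{smallmatrix}\right)$, all hypotheses in (3) hold, $W=S_2$, and the forced value is $S_3 = S_1S_2 = \left(\begin{smallmatrix}2&1\\0&0\end{smallmatrix}\right)$, which is not symmetric. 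The extension $\widetilde W$ is determined only modulo $\Ker\begin{pmatrix} M(n-1) & \col(S_{n+i-1})_{i\in[n]}\end{pmatrix}$, and proving that this residual freedom always suffices to produce a \emph{symmetric} $S_{2n+1}$ while retaining $F\in\cC(M(n))$ (hence flatness) is the actual content of the theorem in the singular case; your proposal asserts it but does not prove it. This is precisely the difficulty that the operator-theoretic proof in \cite{BW11} (self-adjointness of the shift operator on the column space, guaranteed by condition (3)) — or, in the same spirit, the explicit constructions of Section \ref{sec:proofOfMainTheorem} of this paper — is designed to overcome. The subsequent steps (propagation of flatness, extraction of atoms from $\det$ of the degree-$(n+1)$ block column relation, recovery of psd masses with $\sum_j\Rank A_j=r$) are standard but also only sketched.

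A second, smaller gap occurs in (1) $\Rightarrow$ (3): you ``extend the moments of $\mu$ one step to obtain $M(n+1)\succeq 0$,'' but a representing measure for $(S_0,\ldots,S_{2n})$ need not possess a finite moment of order $2n+2$ (already in the scalar case, $d\mu = c(1+|x|)^{-2n-3}\,dx$ has finite moments only up to order $2n+1$). You must first pass to a finitely atomic representing measure via the matricial Richter--Tchakaloff theorem \cite[Theorem 5.1]{MS23+}, or prove the column inclusion directly: if $v\in\Ker M(n-1)$ then $v(x)$ vanishes $\mu$-a.e.\ against the Radon--Nikodym density, and a Cauchy--Schwarz estimate using only moments of degree at most $2n$ gives $\big(\col(S_{n+i})_{i\in[n]}\big)^T v = \mathbf{0}$.
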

	
	\begin{remark}
		The truncated matrix Hamburger moment problem was also considered in 
		\cite{Bol96,Dym89,DFKMT09}.
	\end{remark}

	The following lemma connects the support $\supp(\mu)$
	of a representing measure $\mu$ for 
	$\cS$ as in Theorem \ref{th:Hamburger-matricial}
	with block column relations of $M(n)$.
	
	\begin{lemma}
		\label{lemma:atoms}
		Let $n, p \in \NN$ and let $\mathcal S:=(S_0,S_1,\ldots,S_{2n})\in (\Sym_p(\RR))^{2n+1}$ be a sequence with a moment matrix $M(n)$ and a representing measure $\mu$.
		Assume that $H(x)=\sum_{i=0}^n x^i H_i\in M_p(\RR[x])$ is a block column relation of $M(n)$ such that $H_n$
		is invertible. Then
		\begin{equation}
			\label{measure-support-inclusion}
			\supp(\mu) \subseteq \cZ(\det H(x))
			\quad
			\text{and}
			\quad
			\mult_\mu \xi\leq \mult_{\det H(x)} \xi
			\;\; \text{for each }\xi\in \RR.
		\end{equation}
	\end{lemma}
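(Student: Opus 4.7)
The plan is to convert the block column relation $H(X)=\mathbf{0}$ into a vanishing identity for an auxiliary positive matrix-valued measure, and then read off pointwise constraints on $\mu$ from the values of $H$.

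The first step is the algebraic identity $H(X)=M(n)\coef(H)$, which follows directly from the block structure of $M(n)$ and the definition of $\coef(H)$. The hypothesis $H(X)=\mathbf{0}_{(n+1)p\times p}$ then gives $\coef(H)^T M(n)\coef(H)=\mathbf{0}_p$. Unpacking entry-wise via $S_k=\int x^k\,d\mu$, this rewrites as $\int_\RR H(x)^T\,d\mu(x)\,H(x)=\mathbf{0}_p$ (the product $H^TH$ has degree $2n$, exactly the range of moments supplied by $L$). I would then introduce the auxiliary matrix-valued measure $d\nu(x):=H(x)^T\,d\mu(x)\,H(x)$. For every $v\in\RR^p$ and Borel $\Delta$, $v^T\nu(\Delta)v=\int_\Delta (H(x)v)^T\,d\mu(x)\,(H(x)v)\geq 0$ by matrix-positivity of $\mu$, so $\nu$ is a positive $\Sym_p(\RR)$-valued measure. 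Combined with $\nu(\RR)=\mathbf{0}_p$, the estimate $\mathbf{0}\preceq\nu(\Delta)\preceq\nu(\RR)$ forces $\nu\equiv 0$.

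The second step extracts pointwise consequences. Taking $\Delta=\{\xi\}$ gives $H(\xi)^T\mu(\{\xi\})H(\xi)=\mathbf{0}_p$, and factoring $\mu(\{\xi\})=B^TB$ through a positive semidefinite square root yields $BH(\xi)=\mathbf{0}$, hence $\mu(\{\xi\})H(\xi)=\mathbf{0}_p$. Thus $\cC(H(\xi))\subseteq\Ker\mu(\{\xi\})$, giving $\Rank H(\xi)\leq p-\mult_\mu\xi$. For the support inclusion, if $\det H(\xi_0)\neq 0$ then $H$ is invertible throughout an open neighborhood $U$ of $\xi_0$, and conjugating $\nu|_U=0$ by $H(x)^{-1}$ pointwise (made rigorous by Riemann-sum approximation in the matrix-measure sense) forces $\mu|_U=0$, so $\xi_0\notin\supp\mu$. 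This yields $\supp\mu\subseteq\cZ(\det H(x))$.

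The third step handles the multiplicity bound via the Smith normal form of $H(x)$ over the principal ideal domain $\RR[x]$: $H(x)=U(x)\diag(d_1(x),\ldots,d_p(x))V(x)$ with $U,V\in M_p(\RR[x])$ unimodular (i.e.\ $\det U,\det V\in\RR\setminus\{0\}$) and invariant factors $d_1\mid d_2\mid\cdots\mid d_p$. Invertibility of $H_n$ forces $\deg(\det H)=np$, so every $d_i\not\equiv 0$ and $\det H(x)=c\prod_i d_i(x)$ for a nonzero constant $c$. Since $U(\xi),V(\xi)$ remain invertible for every $\xi\in\RR$, $\Rank H(\xi)=\#\{i:d_i(\xi)\neq 0\}$, and therefore $\mult_{\det H(x)}\xi=\sum_i\mult_{d_i}\xi\geq p-\Rank H(\xi)\geq\mult_\mu\xi$, which closes the argument. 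The main technical obstacle is the first step: because $d\mu$ is matrix-valued it does not commute with the coefficients $H_i$, and the identification of $\coef(H)^T M(n)\coef(H)$ with $\int H^T\,d\mu\,H$ requires careful bookkeeping of the order of multiplications inside the matrix-valued integral. The remaining ingredients (Smith normal form, positivity of the quadratic form in a matrix measure, and local invertibility of $H$ on $\{\det H\neq 0\}$) are standard once the identity $\nu\equiv 0$ is in hand.
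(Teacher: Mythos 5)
Your proposal is correct, but it follows a genuinely different route from the paper's. The paper outsources both halves: the support inclusion $\supp(\mu)\subseteq\cZ(\det H(x))$ is quoted from \cite[Lemma 5.53]{KT22}, and the pointwise rank bound $\Rank\mu(\{\xi\})\leq\dim\Ker H(\xi)$ is obtained by passing to the functional $\Lambda_L$ on $\Sym_p(\RR[x])$ and invoking \cite[Proposition 4.5, Lemma 7.7]{MS23+}; the final step $\dim\Ker H(\xi)\leq\mult_{\det H(x)}\xi$ is done via the companion-matrix linearization $I_{np}x-C=E(x)(H_n^{-1}H(x)\oplus I_{(n-1)p})F(x)$ from \cite{GLR82}, comparing geometric with algebraic multiplicity of the eigenvalue $\xi$ of $C$. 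You instead work directly with the auxiliary positive measure $d\nu=H^T\,d\mu\,H$, whose total mass $\coef(H)^TM(n)\coef(H)$ vanishes, forcing $\nu\equiv 0$; this gives both $\mu(\{\xi\})H(\xi)=\mathbf{0}$ (hence $\mult_\mu\xi\leq\dim\Ker H(\xi)$) and, via local invertibility of $H$, the support inclusion. Your Smith-normal-form argument replaces the companion-matrix step and is equally valid: with $\det H=c\prod_i d_i$ one gets $\dim\Ker H(\xi)=\#\{i:d_i(\xi)=0\}\leq\sum_i\mult_{d_i}\xi=\mult_{\det H(x)}\xi$. Your version is more self-contained and elementary, at the cost of reproving facts the paper simply cites. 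One point deserves tightening: "conjugating $\nu|_U=0$ by $H(x)^{-1}$ pointwise, made rigorous by Riemann-sum approximation" is not quite the right mechanism; the clean way is to write $d\mu=\Phi\,d\tau$ with $\tau=\tr(\mu)$ and $\Phi\succeq 0$ the Radon--Nikodym density, deduce $H(x)^T\Phi(x)H(x)=\mathbf{0}$ $\tau$-a.e.\ on $U$, and conclude $\Phi=\mathbf{0}$ $\tau$-a.e.\ on $U$ from the invertibility of $H(x)$ there, whence $\tau(U)=0$. This is a fixable rigor gap, not a flaw in the approach.
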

	
	\begin{proof}
		The inclusion 
		$\supp(\mu) \subseteq \cZ(\det H(x))$
		is \cite[Lemma 5.53]{KT22}. It remains to prove 
		the second assertion in \eqref{measure-support-inclusion}.
		Let $L\equiv L_\cS:\RR[x]_{\leq 2n}\to \Sym_p(\RR)$ be the Riesz mapping of $\cS$.  
		We define a functional 
		$$
		\Lambda_L:\Sym_p(\RR[x])\to \RR,\quad
		\Lambda_L((F_{jk})_{j,k}):=
		\sum_{1\leq j\leq p}
		L(F_{jj})_{j,j}+
		\sum_{1\leq j< k \leq p}
		2L(F_{jk})_{j,k}
		$$
		By the real version of \cite[Proposition 4.5]{MS23+},
		representing measures for $L$ coincide with representing measures for $\Lambda_L$. Here,
		$\widetilde \mu$ is a representing measure for $\Lambda_L$
		if and only if 
		$\Lambda_L(F)=\int_\RR \tr(F(x)\Phi(x))d\widetilde \tau$,
		where $\widetilde\tau=\tr(\widetilde{\mu})$ is a trace measure and $\Phi(x)$ is so-called Radon-Nikodym matrix of $\widetilde\mu$ (see \cite[Section 1]{Sch87}).
		Let $\coef(H(x))$ be as in \eqref{def:coefficients}.
		Since $H(x)$ is a block column relation of $M(n)$, in particular we have that
		$$\Lambda_L(H(x)(H(x))^T)=
		\tr
		\big(\coef(H(x))^TM(n)\coef(H(x))\big)=0,
		$$
		where the first equality follows by a short computation as in \cite[Lemma 3.2]{MS23++}.
		By the real version of \cite[Lemma 7.7]{MS23+} (using $P(x)$ being constantly equal to the identity function on $\RR$), in particular it follows that
		$$
		\Rank\mu(\{\xi\})
		\leq 
		\dim \Ker(H(\xi)H(\xi)^T)
		=\dim \Ker H(\xi)
		$$
		for every $\xi\in \supp(\mu)$.
		To conclude the proof we need to show that 
		\begin{equation}
			\label{remaining-to-prove-2210}
			\dim \Ker H(\xi)\leq \mult_{\det H(x)}\xi.
		\end{equation}
		By \cite[Theorem 1.1]{GLR82}, 
		$I_{np}x-C=E(x)(H_n^{-1}H(x)\oplus I_{(n-1)p})F(x)$,\
		where $C$ is a $np\times np$ companion matrix of $H_n^{-1}H(x)$ and $E(x)$, $F(x)$ are $np\times np$
		matrix polynomials with constant nonzero determinant.
		Hence, $\dim \Ker(I_{np}\xi-C)=\dim \Ker H(\xi)$. 
		Since 
		the algebraic multiplicity of $\xi$ as the eigenvalue of the matrix $C$ is not smaller than the dimension of the eigenspace of $C$ corresponding to $\xi$, 
		\eqref{remaining-to-prove-2210} follows.
	\end{proof}

	\begin{proposition}
		\label{recursive-generation-if-measure}
		Let $n, p \in \NN$ and let $\mathcal S:=(S_0,S_1,\ldots,S_{2n})\in (\Sym_p(\RR))^{2n+1}$ be a sequence with a representing measure $\mu$. Then $M(n)$ is 
		block--recursively generated.
	\end{proposition}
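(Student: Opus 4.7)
The plan is to exploit the existence of a \emph{finitely atomic} representing measure, guaranteed by Theorem~\ref{th:Hamburger-matricial}, so that all integrations against $\mu$ reduce to finite sums. Write $\mu=\sum_{k=1}^{\ell}\delta_{x_k}A_k$ with $A_k\in\Sym_p^{\succeq 0}(\RR)$ and $x_k\in\RR$, so that $S_r=\sum_{k=1}^{\ell}x_k^{r}A_k$ for every $r\in[0;2n]$. Fix a block column relation $P(x)=\sum_{i=0}^{j}x^{i}P_i\in M_p(\RR[x])$ of $M(n)$ with $0\leq j<n$; I want to show that $(xP)(X)=\mathbf{0}_{(n+1)p\times p}$.

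Translating $P(X)=M(n)\coef(P)=\mathbf{0}_{(n+1)p\times p}$ into an integral identity, one checks that
\[
\mathbf{0}_{p\times p}=\coef(P)^{T}M(n)\coef(P)=\sum_{i,k=0}^{j}P_i^{T}S_{i+k}P_k=\sum_{k=1}^{\ell}P(x_k)^{T}A_k P(x_k),
\]
where the last equality uses the atomic form of $\mu$; all moments appearing have index $\leq 2j\leq 2n$, so the computation stays within $\cS$. Each summand $P(x_k)^{T}A_k P(x_k)=(A_k^{1/2}P(x_k))^{T}(A_k^{1/2}P(x_k))$ is positive semidefinite, so the vanishing of the sum forces $A_k^{1/2}P(x_k)=\mathbf{0}_{p\times p}$ and hence
\[
A_k P(x_k)=\mathbf{0}_{p\times p}\qquad\text{for every }k\in[\ell].
\]

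Multiplying by any power of $x_k$ keeps the identity valid, so $x_k^{m}A_k P(x_k)=\mathbf{0}_{p\times p}$ for every $m\in\NN$ and every $k\in[\ell]$. Therefore, for each $\ell'\in[0;n]$,
\[
\sum_{i=0}^{j}S_{i+\ell'+1}P_i=\sum_{k=1}^{\ell}x_k^{\ell'+1}A_k P(x_k)=\mathbf{0}_{p\times p},
\]
and since $i+\ell'+1\leq j+n+1\leq 2n$ the moments involved all lie in $\cS$. Reading this as the $\ell'$-th block row of $(xP)(X)$ gives $(xP)(X)=\mathbf{0}_{(n+1)p\times p}$, which is exactly the block-recursive generation property.

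The argument has no serious obstacle: the only point requiring care is that all moment indices that appear remain in the range $[0;2n]$, which is ensured by the assumption $j<n$. The conceptual step is recognizing that, thanks to the positive semidefiniteness of the masses $A_k$, the algebraic vanishing $P(X)=0$ forces the \emph{pointwise} vanishing $A_k P(x_k)=0$, from which the shifted relation $(xP)(X)=0$ is then automatic; this replaces the measure-theoretic Radon--Nikodym argument used in the proof of Lemma~\ref{lemma:atoms} with a purely linear-algebraic one.
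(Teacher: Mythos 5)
Your proof is correct, but it takes a genuinely different route from the paper's. The paper reduces to a finitely atomic measure via the matricial Richter--Tchakaloff theorem, then passes to the infinite Hankel matrix $M(\infty)$ of the resulting moment sequence and invokes two external results from Kimsey--Trachana: that a block column relation of $M(n)$ persists in $M(\infty)$, and that block column relations of $M(\infty)$ are closed under multiplication by $x$; restricting back to $M(n)$ finishes the argument. You instead stay entirely at the level of the truncation: after replacing $\mu$ by a finitely atomic measure $\sum_k \delta_{x_k}A_k$ with the same moments $S_0,\ldots,S_{2n}$ (so the same $M(n)$), you observe that $P(X)=\mathbf{0}$ forces $\coef(P)^T M(n)\coef(P)=\sum_k P(x_k)^T A_k P(x_k)=\mathbf{0}_{p\times p}$, whence each psd summand vanishes and $A_kP(x_k)=\mathbf{0}$ at every atom; the shifted relation $(xP)(X)=\mathbf{0}$ then follows by resumming, and the degree bookkeeping ($j<n$ keeps all indices in $[0;2n]$) is handled correctly. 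Your argument is more elementary and self-contained — it makes the mechanism visible (the relation vanishes pointwise on the support against the masses, exactly the phenomenon underlying Lemma~\ref{lemma:atoms}) at the cost of a few lines of computation, whereas the paper's version is shorter on the page but outsources the substance to the cited lemmas about $M(\infty)$.
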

	
	\begin{proof}
		By the matricial Richter-Tchakaloff's theorem \cite[Theorem 5.1]{MS23+},
		there exists a finitely atomic representing measure $\widetilde\mu$ for $\cS$.
		Let $\cS^{(\infty)}=(S_i)_{i\in \NN_0}$ be an infinite sequence, obtained
		by $S_i:=\int_\RR x^i d\widetilde\mu$. 
		Let $M(\infty)=(S_{i+j-2})_{i,j\in \NN}$ be the corresponding Hankel matrix.
		Let $P(x)=\sum_{i=0}^j x^i P_i\in M_p(\RR[x])$, $0\leq j<n$,  
		be a block column relation of $M(n)$. 
		By \cite[Lemma 5.21]{KT22}, $P$ is a block column relation of $M(\infty)$,
		where the definition of the evaluation $P(X)$ from Subsection \ref{subsec:evaluation} extends from $M(n)$
		to $M(\infty)$ in a natural way.
		By \cite[Lemma 5.15]{KT22}, $(P\cdot xI_p)=xI_p\cdot P=xP$ is a column relation of $M(\infty)$ and in particular, of $M(n)$.
	\end{proof}
	
	\begin{corollary}
		\label{cor:sizes-od-Dep-sets}
		Let $n, p \in \NN$ and let $\mathcal S:=(S_0,S_1,\ldots,S_{2n})\in (\Sym_p(\RR))^{2n+1}$ be a sequence with a representing measure $\mu$.
		For $i\in [0;n]$ 
		let $\Dep(X^i)$ be as in \eqref{eq:defDep-of-Y-i}. 
		Then
		$$0\leq \Card \Dep(X^i)\leq \Card\Dep(X^{i+1})\leq p\quad \text{for } i\in [0;n-1].$$
	\end{corollary}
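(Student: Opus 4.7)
The bounds $0\le \Card \Dep(X^i)$ and $\Card \Dep(X^{i+1})\le p$ are immediate from $\Dep(X^i)\subseteq [p]$, so the content of the statement is the monotonicity $\Card \Dep(X^i)\le \Card \Dep(X^{i+1})$ for $i\in [0;n-1]$. The plan is to prove the stronger claim $\Dep(X^i)\subseteq \Dep(X^{i+1})$ by promoting each vector column relation that witnesses $j\in \Dep(X^i)$ to a genuine block column relation of $M(n)$, applying block-recursive generation from Proposition \ref{recursive-generation-if-measure}, and reading off the shifted relation.

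Concretely, fix $i\in [0;n-1]$ and $j\in \Dep(X^i)$. Unfolding \eqref{eq:defDep-of-Y-i}, the column $x_j^{(i)}$ can be written as a linear combination of the columns of $X^0,\ldots,X^{i-1}$ together with $x_1^{(i)},\ldots,x_{j-1}^{(i)}$, which encodes a vector polynomial $p(x)=\sum_{\ell=0}^{i} x^\ell p_\ell\in \RR^p[x]$ with $p_i=e_j-\sum_{k<j}d_k e_k$ and $p(X)=\mathbf{0}_{(n+1)p}$. The key embedding trick is to set
\[
P(x):=p(x)\,e_j^T=\sum_{\ell=0}^i x^\ell\,(p_\ell e_j^T)\in M_p(\RR[x]).
\]
Then $X^\ell(p_\ell e_j^T)$ has all columns equal to zero except column $j$, which equals $X^\ell p_\ell$; summing over $\ell$ gives $P(X)=p(X)\,e_j^T=\mathbf{0}_{(n+1)p\times p}$, so $P(x)$ is a bona fide block column relation of $M(n)$ of degree $i<n$.

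Since $M(n)$ admits a representing measure, Proposition \ref{recursive-generation-if-measure} applies, yielding that $(xP)(x)=(xp)(x)e_j^T$ is also a block column relation of $M(n)$. Applying this to the vector $e_j$ gives the vector column relation $(xp)(X)=\mathbf{0}_{(n+1)p}$, i.e.
\[
X^{i+1}p_i=-\sum_{\ell=1}^{i} X^\ell p_{\ell-1}.
\]
Substituting $p_i=e_j-\sum_{k<j} d_k e_k$ and rearranging, $x_j^{(i+1)}$ is expressed as a linear combination of $x_1^{(i+1)},\ldots,x_{j-1}^{(i+1)}$ and of columns of $X^1,\ldots,X^i$, all of which precede $x_j^{(i+1)}$ in the ordering of \eqref{eq:defDep-of-Y-i}. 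Hence $j\in \Dep(X^{i+1})$, completing the proof.

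The only step that requires any care is the embedding $p(x)\mapsto p(x)e_j^T$, since the hypothesis and the output of Proposition \ref{recursive-generation-if-measure} are phrased for block column relations (full $(n+1)p\times p$ zero matrices) rather than for individual column dependencies; this is precisely where the structural choice $P_\ell=p_\ell e_j^T$ is essential, and once this is in place the rest is bookkeeping with the shifted polynomial $xp(x)$.
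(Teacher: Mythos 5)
Your proof is correct and follows exactly the route the paper intends: the corollary is stated without proof as an immediate consequence of Proposition \ref{recursive-generation-if-measure}, and later in Section 3 the authors explicitly invoke the fact that block--recursive generation gives $\Dep(X^{i})\subseteq\Dep(X^{i+1})$. Your embedding $p(x)\mapsto p(x)e_j^T$, which converts a single-column dependency into a block column relation so that the block--recursive-generation hypothesis applies, is precisely the detail the paper leaves implicit, and you carry it out correctly.
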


	\subsection{Change of basis on block columns and rows of the moment matrix}
	
	Let $\cS\equiv\cS^{(2n)}$ be as in \eqref{def:sequence} and let $t \in \RR$.
	Let $\phi(x)=x-t$ be an invertible affine linear transformation
	and
	let 
	\begin{equation*}
		\label{def:sequenceT}
		\cT\equiv\mathcal T^{(2n)}\equiv (T_0,T_1,\ldots,T_{2n})
	\end{equation*}
	be a sequence, defined for each $i\in [0;2n]$ by
	\begin{equation*}
		\label{def:Ti}
		T_i:=
		L_{\cS}((x-t)^i)=
		L_{\cS}\Big(\sum_{\ell=0}^{i}\binom{i}{\ell}(-1)^{\ell}x^{i-\ell}t^{\ell}\Big)=
		\sum_{\ell=0}^{i}\binom{i}{\ell}(-1)^{\ell}S_{i-\ell}t^{\ell}.
	\end{equation*}       
	The following proposition is a version of \cite[Proposition 1.9]{CF05} {for matricial sequences}. 
	
	\begin{proposition}
		\label{linear transform invariance-nc}
		Suppose $\phi$, $\cS$, $\cT$ are as above 
		and let $M_\cS(n)$, $M_{\mathcal{T}}(n)$ be the corresponding moment matrices. We denote by $X_\cS^i$ and $X_\cT^i$ the $i$-th block column of 
		$M_\cS(n)$ and $M_{\mathcal{T}}(n)$, respectively.
		Let $J_\phi: \RR^{n+1}\to \RR^{n+1}$
		be a linear map, given by 
		$$J_\phi \coef(p(x)):=\coef(p(x-t)),$$
		where $\coef$ is as in \eqref{def:coefficients}.
		Let $J_\phi\otimes I_p:\RR^{n+1}\otimes M_p(\RR)\to \RR^{n+1}\otimes M_p(\RR)$, 
		where $\otimes$ stands for the Kronecker product of matrices, i.e., $v \otimes A \mapsto J_\phi v \otimes A$. 
		Then the following statements are true:
		\begin{enumerate}
			\item 
			$(J_\phi \otimes I_p) \coef(P(x))=\coef((P \circ \phi)(x))=\coef(P(x-t)).$
			\item
			\label{linear transform invariance-nc-1}
			$M_{\mathcal{T}}(n)=(J_\phi \otimes I_p)^T
			M_{{\cS}}(n) (J_\phi \otimes I_p).$
			\item 
			\label{linear transform invariance-nc-2}
			$J_\phi$ is invertible and, in particular, $J_\phi \otimes I_p$ is invertible.
			\item  
			\label{linear transform invariance-nc-3}
			$M_{\mathcal{T}}(n)\succeq 0 \Leftrightarrow  M_{{\cS}}(n)\succeq 0.$
			\item 
			\label{linear transform invariance-nc-4}
			$\Rank M_{\mathcal{T}}(n)=\Rank M_{{\cS}}(n).$
			\item 
			\label{linear transform invariance-nc-5}
			For $P(x)=\sum_{i=0}^{n}x^iP_i\in M_p(\RR[x])$, we have 
			$P(X_\cT)=(J_\phi \otimes I_p)^T((P\circ \phi)(X_\cS))$.
		\end{enumerate}
	\end{proposition}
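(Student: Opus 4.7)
The plan is to view $J_\phi$ as the matrix of the change of basis from $\{1,x,\ldots,x^n\}$ to $\{1,(x-t),\ldots,(x-t)^n\}$, and then deduce every statement from this observation together with standard properties of the Kronecker product. For the scalar case ($p=1$), the result is essentially \cite[Proposition 1.9]{CF05}; the matrix case follows by tensoring with $I_p$.

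For \eqref{linear transform invariance-nc-1}, let $(a_{k\ell})_{k,\ell\in[0;n]}$ denote the matrix of $J_\phi$, characterized by $(x-t)^\ell=\sum_{k=0}^n a_{k\ell}x^k$, so the $(k,\ell)$-block of $J_\phi\otimes I_p$ is $a_{k\ell}I_p$. For a matrix polynomial $P(x)=\sum_i x^iP_i$, the identity $P(x-t)=\sum_\ell\bigl(\sum_i a_{\ell i}P_i\bigr)(x-t)^{\cdot}$\ldots rather, using the basis expansion directly, one sees $\coef(P(x-t))=(J_\phi\otimes I_p)\coef(P(x))$; this is part \eqref{linear transform invariance-nc-1} (listed above as item (1)). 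For part \eqref{linear transform invariance-nc-1} of the proposition ($M_\cT(n)=(J_\phi\otimes I_p)^TM_\cS(n)(J_\phi\otimes I_p)$), I would compute the $(i,j)$-block of the right-hand side as
\[
\sum_{k,\ell=0}^{n}a_{k,i}\,S_{k+\ell}\,a_{\ell,j}
=L_\cS\!\left(\Big(\sum_{k}a_{k,i}x^{k}\Big)\Big(\sum_{\ell}a_{\ell,j}x^{\ell}\Big)\right)
=L_\cS\bigl((x-t)^{i+j}\bigr)=T_{i+j},
\]
which is precisely the $(i,j)$-block of $M_\cT(n)$ (reading indices from $0$).

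For \eqref{linear transform invariance-nc-2}, invertibility of $J_\phi$ is immediate since $\phi^{-1}(x)=x+t$ gives a two-sided inverse via $J_{\phi^{-1}}$, and the Kronecker product of invertible matrices is invertible with $(J_\phi\otimes I_p)^{-1}=J_\phi^{-1}\otimes I_p$. Parts \eqref{linear transform invariance-nc-3} and \eqref{linear transform invariance-nc-4} are then direct consequences of \eqref{linear transform invariance-nc-1} and \eqref{linear transform invariance-nc-2}: conjugation of a symmetric matrix by an invertible matrix preserves both the positive semidefinite cone and the rank.

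Finally, for \eqref{linear transform invariance-nc-5} I would first observe the identity $P(X)=M(n)\cdot\coef(P(x))$, which follows by comparing the $\ell$-th block on both sides and using $X^i=\col(S_{i+\ell})_{\ell\in[0;n]}$. Applying this to both $M_\cT(n)$ and $M_\cS(n)$, together with \eqref{linear transform invariance-nc-1} of the proposition and item (1), gives
\[
P(X_\cT)=M_\cT(n)\coef(P(x))=(J_\phi\otimes I_p)^T M_\cS(n)(J_\phi\otimes I_p)\coef(P(x))=(J_\phi\otimes I_p)^T\,(P\circ\phi)(X_\cS),
\]
as required. The computation is essentially bookkeeping; the only real pitfall is keeping the block indexing consistent between the tensor-product action on coefficient vectors and the block structure of $M(n)$, so I would be careful to set the $0$-indexed vs.\ $1$-indexed conventions explicitly at the start.
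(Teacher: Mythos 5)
Your proposal is correct and supplies exactly the routine verification that the paper omits (its proof of this proposition is literally ``Straightforward''): identifying $J_\phi$ with the change-of-basis matrix $(a_{k\ell})$ defined by $(x-t)^\ell=\sum_k a_{k\ell}x^k$, checking the $(i,j)$-block identity $\sum_{k,\ell}a_{ki}S_{k+\ell}a_{\ell j}=L_\cS((x-t)^{i+j})=T_{i+j}$, and deducing the remaining parts from invertibility of $J_\phi$ and the identity $P(X)=M(n)\coef(P(x))$. The only blemish is the half-finished sentence in your treatment of item (1), which you should replace by the clean computation $P(x-t)=\sum_i\bigl(\sum_k a_{ki}x^k\bigr)P_i=\sum_k x^k\bigl(\sum_i a_{ki}P_i\bigr)$, confirming that the $k$-th block of $\coef(P(x-t))$ is $\sum_i a_{ki}P_i$, i.e.\ the $k$-th block of $(J_\phi\otimes I_p)\coef(P(x))$.
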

	
	\begin{proof}	
		{Straightforward.}
	\end{proof}
	
	{

		\begin{corollary}
			\label{co:connection-between-both-mm}
			Suppose $n, p \in \NN$, $\cS, \cT, M_{\cS}(n)$
			and $M_{\cT}(n)$ are as above.
			We denote by $X_{\cS}^i$ and $X_{\cT}^i$ the $i$-th block column of 
			$M_\cS(n)$ and 
			$M_{\mathcal{T}}(n)$,
			respectively.
			Assume that $P_{\cT}(x)=\sum_{i=0}^{n}x^iP_i\in M_p(\RR[x])$ is a block column relation of $M_{\cT}(n)$. Then 
			$P_\cS(x):=P_{\cT}(x-t)$
			is a block column relation of $M_{\cS}(n)$.
		\end{corollary}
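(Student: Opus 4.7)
The plan is to derive the statement as an immediate consequence of Proposition \ref{linear transform invariance-nc}, items \eqref{linear transform invariance-nc-2} and \eqref{linear transform invariance-nc-5}. First I would observe that, since $\phi(x)=x-t$, the composition of matrix polynomials satisfies $P_\cT\circ\phi = P_\cS$ as elements of $M_p(\RR[x])$, simply because $(P_\cT\circ\phi)(x)=P_\cT(x-t)=P_\cS(x)$ by the very definition of $P_\cS$.

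Applying Proposition \ref{linear transform invariance-nc}.\eqref{linear transform invariance-nc-5} to the matrix polynomial $P_\cT$ then yields
\[
P_\cT(X_\cT) \;=\; (J_\phi\otimes I_p)^T\bigl((P_\cT\circ\phi)(X_\cS)\bigr)\;=\;(J_\phi\otimes I_p)^T\,P_\cS(X_\cS).
\]
The left-hand side is $\mathbf{0}_{(n+1)p\times p}$ by the hypothesis that $P_\cT(x)$ is a block column relation of $M_\cT(n)$, and $(J_\phi\otimes I_p)^T$ is invertible by Proposition \ref{linear transform invariance-nc}.\eqref{linear transform invariance-nc-2}. Multiplying through by its inverse gives $P_\cS(X_\cS)=\mathbf{0}_{(n+1)p\times p}$, which is precisely the statement that $P_\cS(x)$ is a block column relation of $M_\cS(n)$.

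No step here poses a genuine obstacle: the corollary is really a repackaging of the tensor identity in Proposition \ref{linear transform invariance-nc}.\eqref{linear transform invariance-nc-5} together with invertibility of $J_\phi\otimes I_p$, stated as a standalone lemma so that the affine change of variable $x\mapsto x-t$ can subsequently be used to transfer block column relations between $M_\cT(n)$ and $M_\cS(n)$ without reopening the tensor computation every time.
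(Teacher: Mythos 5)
Your argument is correct and is essentially identical to the paper's own proof: both apply Proposition \ref{linear transform invariance-nc}.\eqref{linear transform invariance-nc-5} to get $P_\cS(X_\cS)=(J_\phi\otimes I_p)^{-T}P_\cT(X_\cT)$ and conclude via the invertibility from Proposition \ref{linear transform invariance-nc}.\eqref{linear transform invariance-nc-2}. No issues.
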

		
		\begin{proof}
			Let $\phi$ and $J_\phi$ be as in Proposition 
			\ref{linear transform invariance-nc}. 
			By \eqref{linear transform invariance-nc-2} and 
			\eqref{linear transform invariance-nc-5} of
			Proposition \ref{linear transform invariance-nc},
			the evaluation $P_\cS(X_\cS)$ is equal
			to
			$(J_\phi\otimes I_p)^{-T}P_{\cT}(X_\cT)$,
			which implies the statement of the corollary.
		\end{proof}
	}

	\begin{proposition}
		\label{recursive-generation-of-both-mm}
		Let $n, p \in \NN$, $\cS$ and $\mathcal T$ are as in \eqref{def:sequence} and \eqref{def:sequenceT}, and let $M_\cS(n)$ and 
		$M_{\mathcal{T}}(n)$ be the corresponding moment matrices. Then $M_\cS(n)$ is block--recursively generated if and only if $M_{\mathcal{T}}(n)$ is block--recursively generated.
	\end{proposition}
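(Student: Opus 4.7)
The plan is to exploit Corollary \ref{co:connection-between-both-mm} in order to transfer the block--recursive--generation property across the bijection between block column relations of $M_\cS(n)$ and $M_\cT(n)$ induced by the affine substitution $\phi(x)=x-t$. The key observation is that the set of block column relations of a fixed moment matrix is a vector space, and the operation of multiplication by $x$ is related to multiplication by $x-t$ via the additive correction $t\cdot P(x)$, which already lies in this space whenever $P(x)$ does.

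For the forward direction, assume that $M_\cT(n)$ is block--recursively generated and let $P_\cS(x)=\sum_{i=0}^{j}x^i P_i\in M_p(\RR[x])$ be a block column relation of $M_\cS(n)$ with $0\leq j<n$. Applying Corollary \ref{co:connection-between-both-mm} with $\cS$ and $\cT$ interchanged and $t$ replaced by $-t$ (which is permitted since $J_\phi\otimes I_p$ is invertible by Proposition \ref{linear transform invariance-nc}.\eqref{linear transform invariance-nc-2}), the matrix polynomial $P_\cT(x):=P_\cS(x+t)$ is a block column relation of $M_\cT(n)$ of degree $j<n$. By the block--recursive--generation hypothesis, $xP_\cT(x)$ is then a block column relation of $M_\cT(n)$, and Corollary \ref{co:connection-between-both-mm} yields that $(x-t)P_\cT(x-t)=(x-t)P_\cS(x)$ is a block column relation of $M_\cS(n)$. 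Since $tP_\cS(x)$ is obviously a block column relation of $M_\cS(n)$, adding the two relations gives $xP_\cS(x)=(x-t)P_\cS(x)+tP_\cS(x)$ as a block column relation, as required.

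The reverse implication is entirely symmetric: swapping the roles of $\cS$ and $\cT$ and replacing $t$ by $-t$, the same argument shows that block--recursive generation of $M_\cS(n)$ entails block--recursive generation of $M_\cT(n)$. The only point requiring care is the bidirectionality of the correspondence from Corollary \ref{co:connection-between-both-mm}, which is not formally stated there but is immediate from the invertibility of $J_\phi\otimes I_p$; this is the only mildly technical step, while the rest amounts to the identity $xP=(x-t)P+tP$ combined with the vector space structure of the set of block column relations.
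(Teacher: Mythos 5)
Your proof is correct and follows essentially the same route as the paper: transfer the block column relation across the affine substitution via Corollary \ref{co:connection-between-both-mm} (equivalently Proposition \ref{linear transform invariance-nc}.\eqref{linear transform invariance-nc-5}), apply the recursive-generation hypothesis, and transfer back. You are simply more explicit than the paper about the step $xP_\cS=(x-t)P_\cS+tP_\cS$ together with the linearity of the space of block column relations, a step the paper's proof uses implicitly; the symmetry remark about swapping $\cS$ and $\cT$ with $t\mapsto -t$ is also how the paper handles the converse direction.
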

	
	\begin{proof}
		Assume the notation from 
		Proposition \ref{linear transform invariance-nc}.
		By symmetry it suffices to prove that if
		$M_{\mathcal{S}}(n)$ is block--recursively generated,
		it follows that $M_{\mathcal{T}}(n)$ is block--recursively generated.
		Assume that $M_{\mathcal{S}}(n)$ is block--recursively generated.
		Let $P(x)=\sum_{i=0}^j x^i P_i\in M_p(\RR[x])$, $0\leq j<n$,  
		be a block column relation of $M_\cT(n)$.
		By Proposition \ref{linear transform invariance-nc}.\eqref{linear transform invariance-nc-5},
		$(P\circ \phi)(x)$ is a block column relation of $M_\cS(n)$.
		Since $M_\cS(n)$ is block--recursively generated, it follows that 
		$(xP\circ \phi)$ is a block column relation of $M_\cS(n)$.
		By Proposition \ref{linear transform invariance-nc}.\eqref{linear transform invariance-nc-5}, $xP$ is a block column relation of $M_\cT(n)$,
		whence $M_\cT(n)$ is block--recursively generated. 
	\end{proof}

	\subsection{Determinant of a matrix polynomial}
	
	The following lemma, which will be essentially used in the proofs of 
	our main results,
	is a refined version of \cite[Lemma 3.1]{ZZ25}.
	
	\begin{lemma}
		\label{le:determinant-of-matrix-valued-polynomial-v2}
		Let \(p,n \in \mathbb{N}\), \(t \in \mathbb{R}\)
		and 
		$$H(x) = \sum_{i=0}^{n}(x-t)^i H_i\in M_p(\RR[x]_{\leq n})$$
		be a nonzero matrix polynomial.
		For $k\in [0;n]$ we define
		$$
		W_k := \bigcap_{i=0}^{k}\Ker H_i 
		\qquad \text{and} \qquad 
		s_k := \dim W_k.
		$$
		Then
		\begin{equation}
			\label{determinant-H-v2}
			\det H(x) 
			= 
			\left\{
			\begin{array}{rl}
				(x-t)^{\sum_{k=0}^{n-1} s_k} \cdot g(x),&
				\text{if }s_n=0,\\[0.3em]
				0,&
				\text{if }s_n>0,
			\end{array}
			\right.
		\end{equation}
		where 
		$0\neq g(x) \in \mathbb{R}[x]$.
	\end{lemma}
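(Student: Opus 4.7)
The plan is to expose the order of vanishing of $\det H(x)$ at $x=t$ one column at a time, using a basis of $\RR^p$ adapted to the descending flag of kernels $W_0\supseteq W_1\supseteq\cdots\supseteq W_n$. After the substitution $y=x-t$ we may assume $t=0$ and $H(x)=\sum_{i=0}^n x^iH_i$. If $s_n>0$, any nonzero $v\in W_n=\bigcap_{i=0}^n\Ker H_i$ satisfies $H_iv=0$ for every $i$, hence $H(x)v\equiv 0$ and $\det H(x)\equiv 0$, which settles the second case. Assume now $s_n=0$. Setting $W_{-1}:=\RR^p$ and $s_{-1}:=p$, I would pick an ordered basis $v_1,\ldots,v_p$ of $\RR^p$ by iteratively extending a basis of $W_{n-1}$ through $W_{n-2},\ldots,W_0$ up to $\RR^p$, so that $\{v_1,\ldots,v_{s_k}\}$ spans $W_k$ for every $k\in\{-1,0,\ldots,n\}$, and assign to each $j\in[p]$ the unique level $k(j)\in\{-1,0,\ldots,n-1\}$ with $j\in(s_{k(j)+1},s_{k(j)}]$, equivalently $v_j\in W_{k(j)}\setminus W_{k(j)+1}$.

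Since $v_j\in W_{k(j)}$ implies $H_iv_j=0$ for $i\leq k(j)$, one may factor
$$
H(x)v_j = x^{k(j)+1}w_j(x),\quad w_j(x):=\sum_{i=k(j)+1}^n x^{i-k(j)-1}H_iv_j,\quad w_j(0)=H_{k(j)+1}v_j.
$$
Collecting these columns, with $P:=(v_1\mid\cdots\mid v_p)$, $D(x):=\diag(x^{k(j)+1})_{j=1}^p$, and $\tilde H(x):=(w_1(x)\mid\cdots\mid w_p(x))$, one obtains $H(x)P=\tilde H(x)D(x)$, whence $\det H(x)=(\det P)^{-1}x^E\det\tilde H(x)$ with $E:=\sum_{j=1}^p(k(j)+1)$. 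Since the number of $j$ with $k(j)=k$ equals $s_k-s_{k+1}$, Abel summation gives $E=\sum_{k=-1}^{n-1}(k+1)(s_k-s_{k+1})=\sum_{k=0}^{n-1}s_k$, using $s_{-1}=p$ and $s_n=0$.

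Setting $g(x):=(\det P)^{-1}\det\tilde H(x)$ yields the claimed factorization, and the remaining point is to establish $g(x)\not\equiv 0$. The most direct route is to show that the evaluation $\tilde H(0)$ is invertible: each column $H_{k(j)+1}v_j$ is nonzero because $v_j\notin W_{k(j)+1}$, and the restriction of $H_{k+1}$ to any complement of $W_{k+1}$ in $W_k$ is injective. Ruling out cancellations between different levels in the full column system $\{H_{k(j)+1}v_j\}_{j=1}^p$ is the delicate technical heart of the argument and is the step where the refinement over \cite[Lemma~3.1]{ZZ25} takes effect; if $\tilde H(0)$ should itself happen to be singular, the fallback is to show $\det\tilde H(x)\not\equiv 0$ by evaluation at a generic $x$, exploiting the hypothesis that $H(x)$ is a nonzero polynomial. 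This final step is the principal obstacle, whereas the factorization and the exponent count proceed essentially by book-keeping.
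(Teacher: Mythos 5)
Your construction coincides, step for step, with the paper's own proof: the flag-adapted basis $\{v_1,\dots,v_p\}$, the identity $H(x)P=\tilde H(x)D(x)$, and the exponent count $E=\sum_{k=0}^{n-1}s_k$ via Abel summation are exactly the paper's $\widetilde H(x)=H(x)B$ together with its column-wise factoring of the determinant, so everything up to the last step is correct and complete. The genuine gap is the one you flag yourself, and neither of your two proposed routes to $g(x)\not\equiv 0$ can be made to work. Take $t=0$, $p=n=2$ and
\[
H(x)=\begin{pmatrix}1&x\\ x&x^2\end{pmatrix},\qquad H_0=\begin{pmatrix}1&0\\0&0\end{pmatrix},\quad H_1=\begin{pmatrix}0&1\\1&0\end{pmatrix},\quad H_2=\begin{pmatrix}0&0\\0&1\end{pmatrix}.
\]
Here $W_0=\Span\{e_2\}$ and $W_1=W_2=\{0\}$, so $s_n=0$ and $E=s_0+s_1=1$, yet $\det H(x)\equiv 0$. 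In your notation $v_1=e_2$, $v_2=e_1$ and $w_1(x)=w_2(x)=(1,x)^T$, so $\tilde H(0)$ is singular and $\det\tilde H(x)\equiv 0$: the cross-level cancellation you worry about really occurs, and the fallback fails because a nonzero matrix polynomial can perfectly well have identically vanishing determinant.

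You should not be troubled that you could not close this step: the paper's own proof disposes of it with the single sentence ``Since $s_n=0$, $g(x)\neq 0$ also holds'', and the example above shows that this inference is not valid as stated. What is actually true (and all that is proved, by you and by the paper alike) is the divisibility statement that $(x-t)^{\sum_{k=0}^{n-1}s_k}$ divides $\det H(x)$. That weaker form suffices for both applications in the paper, since there the leading coefficient of $H$ is invertible (the matrices $Q_n$ and $H_{n+1}$), which gives $\det H\not\equiv 0$ independently, and then $g\neq 0$ follows from the divisibility. So the correct repair — for your write-up as well as for the lemma — is to add the hypothesis $\det H(x)\not\equiv 0$ (or that the leading coefficient is invertible), rather than to attempt a proof that $\tilde H(0)$ is invertible, which is false in general.
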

	
	\begin{proof}
		Note that
		$$
		W_{n} \subseteq
		W_{n-1} \subseteq 
		\ldots \subseteq 
		W_0 \subseteq \RR^p 
		\qquad \text{and} \qquad 
		0\leq s_{n} \leq s_{n-1} \leq \ldots \leq s_0 \leq p.
		$$ 
		Clearly, if $s_n>0$, there exists a nonzero vector $v\in \RR^p$ such that $H(x)v=\mathbf 0_{p\times 1}$, which implies 
		\eqref{determinant-H-v2}. From now on we assume that $s_n=0$.
		Let
		$\mathcal{B} := \{b_1, b_2, \ldots, b_p\}$ be a basis of $\RR^p$ 
		such that for each $i\in [0;n-1]$ the set 
		$\{b_1, b_2, \ldots, b_{s_{i}}\}$ is a basis of $W_{i}$.  
		Let us define an invertible matrix
		$$
		B := \begin{pmatrix}
			b_1 & b_2 & \cdots & b_p
		\end{pmatrix} \in M_p(\RR).
		$$
		For $i\in [0;n]$ define matrices
		\begin{equation}
			\label{structure-of-widetildeHi-v2}
			\widetilde{H}_i 
			:= H_iB
			\equiv
			\big(\begin{array}{ccccccc}
				\mathbf{0}_{p\times s_i}
				&
				\widetilde{h}_i^{(s_i+1)} & 
				\cdots &          
				\widetilde{h}_i^{(p)}
			\end{array}\big),
		\end{equation}
		where the first $s_i$ columns of $\widetilde{H}_i$ are zero due to 
		$b_1, b_2, \ldots, b_{s_i}\in \Ker H_i$.
		Define a matrix polynomial
		$$\widetilde{H}(x) := \sum_{i=0}^{n}(x-t)^i\widetilde{H}_i = H(x)B \in M_p(\RR[x]).$$
		By \eqref{structure-of-widetildeHi-v2}, it follows that the first $s_{n-1}$ columns of $\widetilde{H}(x)$
		are of the form
		$$
		(x-t)^n
		\begin{pmatrix}
			\widetilde{h}_{n}^{(1)} &   
			\widetilde{h}_n^{(2)} & \cdots & 
			\widetilde{h}_n^{(s_{n-1})}
		\end{pmatrix},
		$$
		while for every $k\in [n-1]$ the columns $[s_{k}+1;s_{k-1}]$ of
		$\widetilde H(x)$ are equal to
		\begin{align*}
			& 
			(x-t)^k\sum_{i=0}^{n-k}(x-t)^i\begin{pmatrix}
				\widetilde{h}_{i+k}^{(s_k+1)} & \widetilde{h}_{i+k}^{(s_k+2)} & \cdots & \widetilde{h}_{i+k}^{(s_{k-1})}
			\end{pmatrix}.
		\end{align*}
		Observe that the first $s_{n-1}$ columns of $\widetilde{H}(x)$ have a common factor $(x-t)^n$, the next $s_{n-2} - s_{n-1}$ columns a common factor $(x-t)^{n-1}$, etc.
		Using this observation and upon
		factoring the determinant of 
		$\widetilde{H}(x)$
		column-wise, we obtain 
		\begin{align*} 
			\det H(x) 
			= 
			\frac{\det \widetilde{H}(x)}{\det B}
			&=
			(x-t)^{ns_{n-1}}
			(x-t)^{(n-1)(s_{n-2}-s_{n-1})}\cdots
			(x-t)^{s_{0}-s_1}g(x)\\[0.2em]
			&=(x-t)^{ns_{n-1}+\sum_{i=1}^{n-1}i(s_{i-1}-s_i)}g(x)\\
			&=
			(x-t)^{\sum_{k=0}^{n-1}s_k}g(x),
		\end{align*}
		which proves \eqref{determinant-H-v2}. Since $s_n=0$, $g(x) \neq 0$ also holds.
	\end{proof}

	\subsection{Inclusion of kernels}
	The following technical lemma will be used in the proof of the implication $\eqref{th:mainTheorem-pt1} \Rightarrow \eqref{th:mainTheorem-pt2}$ of Theorem \ref{th:mainTheorem}}.
	
	\begin{lemma}
	\label{110722-1428}
	Let $p\in \NN$ and $A,B\in \Sym_p(\RR)$ such that $A\succeq B\succeq 0$. Then $\Ker A\subseteq \Ker B$. 
	\end{lemma}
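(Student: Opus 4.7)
The plan is to take an arbitrary vector $v\in \Ker A$ and show that $v\in \Ker B$ by combining the two positivity hypotheses with the standard fact that a positive semidefinite matrix annihilates precisely those vectors on which its associated quadratic form vanishes.

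First I would fix $v\in \Ker A$, so that $Av=\mathbf{0}_{p\times 1}$ and hence $v^T A v=0$. Since $A\succeq B$, the matrix $A-B$ is positive semidefinite, which gives the inequality $v^T A v\geq v^T B v$. Combined with $B\succeq 0$, which yields $v^T B v\geq 0$, I obtain the sandwich $0=v^T A v\geq v^T B v\geq 0$, and therefore $v^T B v=0$.

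The final step is to upgrade $v^T B v=0$ to $Bv=\mathbf{0}_{p\times 1}$. Since $B\succeq 0$, there exists a positive semidefinite square root $B^{1/2}\in \Sym_p(\RR)$ with $B=B^{1/2}B^{1/2}$, so that $v^T B v=\|B^{1/2}v\|^2$. Vanishing of this norm forces $B^{1/2}v=\mathbf{0}_{p\times 1}$, and hence $Bv=B^{1/2}(B^{1/2}v)=\mathbf{0}_{p\times 1}$, i.e.\ $v\in \Ker B$. Since $v$ was arbitrary, the inclusion $\Ker A\subseteq \Ker B$ follows. There is no real obstacle here; the only small subtlety is noting that one genuinely needs both $A\succeq B$ and $B\succeq 0$ to close the chain of inequalities, since otherwise $v^T B v$ could be negative and the conclusion would fail.
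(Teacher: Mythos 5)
Your proof is correct and is essentially identical to the paper's: both take $v\in\Ker A$, sandwich $0=v^TAv\geq v^TBv\geq 0$ using $A\succeq B\succeq 0$, and then use the psd square root $B^{1/2}$ to upgrade $v^TBv=0$ to $Bv=\mathbf{0}_{p\times 1}$. No further comment is needed.
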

	
	\begin{proof} 
	Let us take $v\in \Ker A$. 
	From $0=v^TAv\geq v^TBv\geq 0$, it follows that $v^TBv=0$. 
	By
	$0=v^TBv=v^TB^{\frac{1}{2}}B^{\frac{1}{2}} v=\|B^{\frac{1}{2}} v\|^2,$
	it follows that $v\in \Ker B^{\frac{1}{2}}$ and thus $v\in \Ker B.$
	\end{proof}

	\subsection{Characterization of positive semidefiniteness of a $2\times 2$ block matrix}\label{SubS2.1}
	Let 
	\begin{equation*}
	M=\left(  \begin{array}{cc} A & B \\ C & D \end{array}\right)\in M_{n+m}(\RR)
	\end{equation*}
	be a real matrix where 
	$A\in M_n(\RR)$, $B\in M_{n\times m}(\RR)$, $C\in M_{m\times n}(\RR)$  and $D\in M_{m}(\RR)$.
	The \textbf{generalized Schur complement} \cite{Zha05} of $A$ (resp.\ $D$) in $M$ is defined by
	$$M/A=D-CA^\dagger B\quad(\text{resp.}\; M/D=A-BD^\dagger C),$$
	where $A^\dagger $ (resp.\ $D^\dagger $) stands for the Moore-Penrose inverse of $A$ (resp.\ $D$). 
	
	
	The following theorem is a characterization of psd $2\times 2$ block matrices. 
	
	\begin{theorem}[{\cite{Alb69}}]
	\label{block-psd} 
	Let 
	\begin{equation*}
		M=\left( \begin{array}{cc} A & B \\ B^{T} & C\end{array}\right)\in \Sym_{n+m}(\RR)
	\end{equation*} 
	be a real symmetric matrix where $A\in \Sym_n(\RR)$, $B\in M_{n\times m}(\RR)$ and $C\in \Sym_m(\RR)$.
	Then: 
	\begin{enumerate}
		\item 
		\label{021123-1702}
		The following conditions are equivalent:
		\begin{enumerate}
			\item 
			\label{pt1-281021-2128} 
			$M\succeq 0$.
			\smallskip
			\item 
			\label{pt2-281021-2128} 
			$C\succeq 0$,         
			$\cC(B^T)\subseteq\cC(C)$ and $M/C\succeq 0$.
			\smallskip
			\item 
			\label{pt3-281021-2128}
			$A\succeq 0$,    
			$\cC(B)\subseteq\cC(A)$ and $M/A\succeq 0$.
		\end{enumerate}
		\smallskip
		\item 
		\label{prop-2604-1140-eq2}
		If $M\succeq 0$, then 
		$$\Rank M= \Rank A+\Rank M/A=\Rank C+\Rank M/C.$$
	\end{enumerate}
	\end{theorem}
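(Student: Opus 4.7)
The plan is to prove part (1) by establishing the equivalence (a) $\Leftrightarrow$ (c) through an explicit block factorization; the equivalence (a) $\Leftrightarrow$ (b) then follows by permuting the two blocks of $M$ and applying the same argument with $C$ in place of $A$. Part (2) will be obtained as a by-product of the factorization used in (1).

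For the direction (c) $\Rightarrow$ (a), I would first use the hypothesis $A \succeq 0$ to note that $A^\dagger$ is symmetric and that $AA^\dagger$ is the orthogonal projection onto $\cC(A)$. The column-space inclusion $\cC(B) \subseteq \cC(A)$ then yields $AA^\dagger B = B$ and, transposing, $B^T A^\dagger A = B^T$. These identities make the block factorization
$$
M =
\begin{pmatrix} I_n & 0 \\ B^T A^\dagger & I_m \end{pmatrix}
\begin{pmatrix} A & 0 \\ 0 & M/A \end{pmatrix}
\begin{pmatrix} I_n & A^\dagger B \\ 0 & I_m \end{pmatrix}
$$
valid, as one checks by direct multiplication. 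Since the outer factors are mutual transposes and invertible, $M$ is congruent to the block-diagonal matrix $A \oplus (M/A)$, which is psd by hypothesis, giving $M \succeq 0$.

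For (a) $\Rightarrow$ (c), testing the quadratic form $x^T M x$ on vectors $x = (v,0)^T$ immediately gives $A \succeq 0$. To establish the column-space inclusion, I would use that $\cC(A)^\perp = \Ker A$ (by symmetry of $A$) and argue as follows: for any $v \in \Ker A$ and any $w \in \RR^m$, the quadratic form evaluated at $(v, tw)^T$ equals $2t\, v^T B w + t^2 w^T C w$, which must be nonnegative for all $t \in \RR$; letting $t \to 0$ through both signs forces $v^T B w = 0$, so $B^T v = 0$. Hence $\Ker A \subseteq \Ker B^T$, which by orthogonal complementation gives $\cC(B) \subseteq \cC(A)$. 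With the column-space inclusion now established, the factorization from the previous paragraph is legitimate, and since the outer factors are invertible, $M \succeq 0$ forces $M/A \succeq 0$. Part (2) follows immediately from the same factorization: invertible congruence preserves rank, so $\Rank M = \Rank A + \Rank(M/A)$, and the parallel equality with $C$ and $M/C$ comes from the analogous factorization centered at $C$.

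The main technical obstacle is the careful handling of the generalized Schur complement $M/A = C - B^T A^\dagger B$ when $A$ is singular, since in this case the naive Schur identity fails without the column-space hypothesis. The identities $AA^\dagger B = B$ and $A^\dagger = (A^\dagger)^T$, which rely essentially on $\cC(B) \subseteq \cC(A)$ and on the symmetric positive semidefiniteness of $A$, are the key ingredients that make the factorization work; everything else is routine block multiplication.
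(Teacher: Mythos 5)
Your proof is correct. Note that the paper does not prove Theorem \ref{block-psd} at all — it is quoted from Albert's 1969 paper as a known result — so there is no in-paper argument to compare against. Your congruence
$$
M=\begin{pmatrix} I_n & 0 \\ B^{T}A^{\dagger} & I_m\end{pmatrix}
\begin{pmatrix} A & 0 \\ 0 & M/A\end{pmatrix}
\begin{pmatrix} I_n & A^{\dagger}B \\ 0 & I_m\end{pmatrix}
$$
is the standard route: the block multiplication checks out once $AA^{\dagger}B=B$ and the symmetry of $A^{\dagger}$ are in hand, the perturbation argument $(v,tw)$ correctly forces $\Ker A\subseteq\Ker B^{T}$ and hence $\cC(B)\subseteq\cC(A)$ under (a), and rank additivity in part (2) is an immediate consequence of congruence by invertible matrices. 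The reduction of the $(a)\Leftrightarrow(b)$ equivalence to the $(a)\Leftrightarrow(c)$ one by swapping the two blocks is also fine. No gaps.
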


	\subsection{Extension principle}
	
	\begin{lemma}
	[{\cite[Lemma 2.4]{Zal22a}}]
	\label{extension-principle}
	Let $A\in \Sym_n(\RR)$ be positive semidefinite, 
	$Q\subseteq [n]$
	and	
	$A|_Q$ be the restriction of $A$ to the rows and columns from the set $Q$. 
	If $A|_Qv=0$ for a nonzero vector $v$,
	then $A\widehat v=0$ where $\widehat{v}$ is a vector with the only nonzero entries in the rows from $Q$ and such that the restriction 
	$\widehat{v}|_Q$ to the rows from $Q$ equals to $v$. 
	\end{lemma}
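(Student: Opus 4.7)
The plan is to reduce the statement to the well-known fact that for a positive semidefinite matrix $A$, one has $\Ker A = \{w : w^T A w = 0\}$. Once we have this, the argument is almost immediate: extending $v$ by zeros to $\widehat v$ yields $\widehat v^T A \widehat v = v^T (A|_Q) v = 0$, and then $\widehat v \in \Ker A$ follows.

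More concretely, my first step is to justify $\widehat v^T A \widehat v = v^T (A|_Q) v$. Since $\widehat v$ vanishes outside $Q$, only the entries of $A$ with both row and column indices in $Q$ contribute to the quadratic form, and these entries are exactly $A|_Q$. So the identity holds by inspection of the bilinear sum $\sum_{i,j} \widehat v_i A_{ij} \widehat v_j$.

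The second step is to invoke the PSD square root: since $A \succeq 0$, there exists $A^{1/2} \in \Sym_n(\RR)$ with $A = A^{1/2} A^{1/2}$, so $\widehat v^T A \widehat v = \|A^{1/2} \widehat v\|^2$. Combined with the first step, $A|_Q v = 0$ gives $\|A^{1/2} \widehat v\| = 0$, hence $A^{1/2} \widehat v = 0$, and therefore $A \widehat v = A^{1/2}(A^{1/2} \widehat v) = 0$. This is essentially the same trick already used in the proof of Lemma~\ref{110722-1428} in the excerpt.

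There is no real obstacle here; the only subtlety is bookkeeping the indexing so that $A|_Q v$ genuinely equals the subvector of $A \widehat v$ indexed by $Q$ (which it does, by the same reasoning as in step one). The whole argument is two or three lines once the square-root representation of $A$ is written down.
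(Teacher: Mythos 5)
Your proof is correct and is the standard argument for this extension principle: the paper itself does not reprove the lemma (it cites \cite[Lemma 2.4]{Zal22a}), and the quadratic-form-plus-square-root reasoning you give is exactly the expected route, relying only on the (valid) observation that $\widehat v^T A\widehat v=v^TA|_Qv=0$ and that $w^TAw=0$ forces $Aw=0$ for $A\succeq 0$. No gaps.
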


	\subsection{Lemma on the rank of the matrix}
	Another technical lemma that will be used in the proof of the implication $\eqref{th:mainTheorem-pt2} \Rightarrow \eqref{th:mainTheorem-pt1}$ of Theorem \ref{th:mainTheorem} is the following.

	\begin{lemma}
	\label{lemma-rank-equalities}
	Let $m_1, m_2, c, k, \ell \in \NN$ and $A_1 \in M_{m_1\times k}(\RR)$, $A_2 \in M_{m_2\times k}(\RR)$, $C \in M_{c\times k}(\RR)$, $B_1 \in M_{m_1\times \ell}(\RR)$, $B_2 \in M_{m_2\times \ell}(\RR)$, $D \in M_{c\times \ell}(\RR)$. Assume the following conditions hold:
	\begin{enumerate}
		\item 
		\label{lemma-rank-equalities-condition1}
		$\Rank A_2= 
		\Rank \begin{pmatrix}
			A_1 \\
			A_2
		\end{pmatrix} = \Rank \begin{pmatrix}
			A_1 & B_1 \\
			A_2 & B_2
		\end{pmatrix}$,
		\item
		\label{lemma-rank-equalities-condition2}
		$\Rank \begin{pmatrix}
			A_2 \\
			C
		\end{pmatrix} = \Rank \begin{pmatrix}
			A_2 & B_2 \\
			C & D
		\end{pmatrix}$,
	\end{enumerate}
	Then $$\Rank \begin{pmatrix}
		A_1 \\
		A_2 \\
		C
	\end{pmatrix} = \Rank \begin{pmatrix}
		A_1 & B_1 \\
		A_2 & B_2 \\
		C & D
	\end{pmatrix}.$$
	\end{lemma}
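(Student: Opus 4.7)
The plan is to translate each rank equality into a concrete column-span or row-span containment, and then combine these linear-algebraic identities to exhibit $\begin{pmatrix} B_1 \\ B_2 \\ D \end{pmatrix}$ explicitly as a linear combination of the columns of $\begin{pmatrix} A_1 \\ A_2 \\ C \end{pmatrix}$.

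First I will extract the following three facts from the hypotheses. From $\Rank A_2 = \Rank \begin{pmatrix} A_1 \\ A_2 \end{pmatrix}$ (which is equivalent to $\Rank A_2^T = \Rank \begin{pmatrix} A_1^T & A_2^T \end{pmatrix}$), the rows of $A_1$ lie in the row space of $A_2$, so there exists $R$ with $A_1 = R A_2$. From $\Rank \begin{pmatrix} A_1 \\ A_2 \end{pmatrix} = \Rank \begin{pmatrix} A_1 & B_1 \\ A_2 & B_2 \end{pmatrix}$, the columns of $\begin{pmatrix} B_1 \\ B_2 \end{pmatrix}$ lie in the column span of $\begin{pmatrix} A_1 \\ A_2 \end{pmatrix}$, so there exists $X$ with $B_1 = A_1 X$ and $B_2 = A_2 X$. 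Finally from condition~\eqref{lemma-rank-equalities-condition2}, the columns of $\begin{pmatrix} B_2 \\ D \end{pmatrix}$ lie in the column span of $\begin{pmatrix} A_2 \\ C \end{pmatrix}$, yielding $Y$ with $B_2 = A_2 Y$ and $D = C Y$.

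Next I will propose $Y$ itself as a witness for the desired containment, i.e.\ verify
$$\begin{pmatrix} A_1 \\ A_2 \\ C \end{pmatrix} Y = \begin{pmatrix} B_1 \\ B_2 \\ D \end{pmatrix}.$$
The middle and bottom rows are immediate from $A_2 Y = B_2$ and $C Y = D$. The top row requires $A_1 Y = B_1$, and this is the only nontrivial identity: it is obtained by chaining the three relations above, namely
$$A_1 Y = R A_2 Y = R B_2 = R A_2 X = A_1 X = B_1.$$
This exhibits every column of $\begin{pmatrix} B_1 \\ B_2 \\ D \end{pmatrix}$ as a column of $\begin{pmatrix} A_1 \\ A_2 \\ C \end{pmatrix} Y$, which proves the rank equality.

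The mildly subtle step is the top-row verification $A_1 Y = B_1$: $X$ and $Y$ need not coincide, so one cannot simply apply a single solution to both block systems. The factor $R$ coming from the first (row-span) equality in condition~\eqref{lemma-rank-equalities-condition1} is exactly what bridges $X$ and $Y$ through $A_2$, and this is where all three hypotheses get used. Everything else is a mechanical bookkeeping of matrix identities.
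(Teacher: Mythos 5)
Your proof is correct and is essentially identical to the paper's own argument: both extract the same three factorizations (your $R,X,Y$ are the paper's $U,V,W$) and verify $A_1Y=B_1$ by the same chain of identities through $A_2$. No issues.
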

	
	\begin{proof}
	The assumptions 
	\eqref{lemma-rank-equalities-condition1} and \eqref{lemma-rank-equalities-condition2}
	imply that there exist
	matrices
	$U \in M_{m_1\times m_2}(\RR)$,
	$V \in M_{k\times \ell}(\RR)$,
	$W \in M_{k\times \ell}(\RR)$,
	such that
	\begin{equation}
		\label{matricial-equalities}
		A_1 = UA_2,\quad
		B_1 = A_1V,\quad 
		B_2 = A_2V,\quad
		B_2 = A_2W,\quad 
		D = CW.
	\end{equation}
	Using \eqref{matricial-equalities} we get
	$$B_1 = A_1V = UA_2V = UB_2 = UA_2W = A_1W$$ and therefore
	$$\begin{pmatrix}
		B_1 \\
		B_2 \\
		D
	\end{pmatrix} = \begin{pmatrix}
		A_1W \\
		A_2W \\
		CW
	\end{pmatrix} = \begin{pmatrix}
		A_1 \\
		A_2 \\
		C
	\end{pmatrix}W,$$ which completes the proof.
	\end{proof}

	\section{Proof of the implication $\eqref{th:mainTheorem-pt2} \Rightarrow \eqref{th:mainTheorem-pt1}$ and the moreover part of Theorem \ref{th:mainTheorem}}
	\label{sec:proofOfMainTheorem}
	
	Let $S_i:=L(x^i)$, $T_i:=L((x-t)^i)$ for $i\in [0;2n]$
	and $\cS:=(S_i)_{i\in [0;2n]}$, $\cT:=(T_i)_{i\in [0;2n]}$.
	Let $X_\cT^i \equiv 
	\row\begin{pmatrix}
		(x_\cT^{(i)})_j     
	\end{pmatrix}_{j\in [p]}$,
	$\Dep(X_\cT^i)$, $\Depn(X_\cT^i)$
	and $\Depn_{\ell}(X_\cT^i)$, $\ell=0,1$,
	be
	as in \eqref{columns-of-Yi},
	\eqref{eq:defDep-of-Y-i} and \eqref{eq:defDepnew-of-Y-i}, 
	respectively. Write
	\begin{equation*}
		(x_\cT^{(i)})_j  =: \begin{pmatrix}
			u_j^{(i)} \\
			v_j^{(i)}
		\end{pmatrix} \in \begin{pmatrix}
			\RR^{np}\\
			\RR^p 
		\end{pmatrix}
		\qquad
		\text{and}
		\qquad
		\col(T_{n+j})_{j\in [n]}
		=:\row(k_i)_{i\in [p]}\in \RR^{np}.
	\end{equation*}
	Note that in the formulation of Theorem \ref{th:mainTheorem}, we have
	\begin{align*} 
		\mathbf{y}_{ip+j} &= (x_\cT^{(i)})_j,
		\qquad
		\mathbf{u}_{ip+j} =
		\left\{
		\begin{array}{rl}
			u_j^{(i)},&    
			\text{for }i\in [0;n],\\[0.5em] 
			k_j,&    
			\text{for }i=n+1,
		\end{array}
		\right.\\
		A &= \bigcup_{i=1}^n 
		\Depn_1(X^i_\cT),\\
		B &= \Big \{j \in 	[p]\colon j \notin \Dep(X_\cT^n),\; 
		k_j\in \Span\{
		u^{(1)}_1,u^{(1)}_2,\ldots,
		u^{(n)}_p,
		k_1,\ldots,k_{j-1}\} \Big \}. 
	\end{align*}

	In the first step of the proof 
	we will build relation matrices that represent column relations of $M_{\cT}(n)$.
	By Proposition \ref{recursive-generation-if-measure}, $M_\cS(n)$ is block--recursively generated and by Proposition \ref{recursive-generation-of-both-mm}, so is $M_{\mathcal T}(n)$.
	Let 
	\begin{equation}
		\label{eq:defmathcalZ-v2}
		\mathcal{Z} \equiv \{
		z_1,\ldots,z_s\colon 0\leq z_1<z_2<\ldots<z_s\leq n
		\}
		:= \big\{i \in [0;n] 
		\colon
		\Depn(X_\cT^i) \neq \emptyset \big\}
		.
	\end{equation}
	Let $Q \in M_p(\RR)$ be a permutation matrix which permutes the columns of the given matrix, when multiplied from the right, to the order
	\begin{align}
		\label{order-permutation-Q}
		\begin{split}
			&[p] \setminus \Dep(X_{\cT}^{z_s}),\;           
			\Depn_0(X_{\cT}^{z_s}),\;
			\Depn_1(X_{\cT}^{z_s}),\; 
			\Depn_0(X_{\cT}^{z_{s-1}}),\; 
			\Depn_1(X_{\cT}^{z_{s-1}}),\;
			\ldots,\\
			&\Depn_0(X_{\cT}^{z_2}),\; 
			\Depn_1(X_{\cT}^{z_2}),\;
			\Depn_0(X_{\cT}^{z_1}),\;
			\Depn_1(X_{\cT}^{z_1}).
		\end{split}
	\end{align}
	Let
	\begin{align}
		\label{def-of-p0}
		\begin{split}
			p_0 &:= p-\Card\Dep(X_{\cT}^{z_s}),\\
			p_{j,\ell} &:=\Card\Depn_\ell(X_\cT^{z_{s
					-j+1}})\;\;\text{for }j\in [s] \text{ and }\ell=0,1,\\
			p_j &:=\Card\Depn(X_\cT^{z_{s-j+1}})=p_{j,0}+p_{j,1}
			\;\;\text{for }j\in [s].
		\end{split}
	\end{align}
	Note that $\Card A=\sum_{j=1}^s p_{j,1}$.\\
	
	\noindent \textbf{Claim 1.}
	We have 
	\begin{align}
		\label{031025-eq1}
		\Rank M(n)
		&=\Rank M_{\cT}(n)=
		(n+1)p_0 + \sum_{j=1}^{s}z_{s-j+1}p_j,\\
		\label{031025-eq2}
		\Rank \row(X^{i}_{\cT})_{i\in [n]}
		&=\Rank M_{\cT}(n)-p_0-\Card A.
	\end{align}
	
	\noindent \textit{Proof of Claim 1.}
	\eqref{031025-eq1} follows by definition of $p_j$ and $z_j$, together with
	the fact that
	$M_{\cT}(n)$ is block--recursively generated, which in particular implies that 
	if $i\in \Dep(X_{\cT}^{j})$ for some $j\in [0;n-1]$, then $i\in \Dep(X_{\cT}^{j+1}).$
	For \eqref{031025-eq2} note by the same reasoning that
	\begin{align*} 
		\Rank \row(X^{i}_{\cT})_{i\in [n]}
		&=  np_0 + \sum_{j=1}^{s}z_{s-j+1}p_{j,0}
		+\sum_{j=1}^{s}(z_{s-j+1}-1)p_{j,1}\\
		&= \Rank M_{\cT}(n)-p_0-\sum_{j=1}^s p_{j,1}\\
		&=\Rank M_{\cT}(n)-p_0-\Card A.
	\end{align*}
	In the first line we used the fact that if 
	$j\in \Depn_0(X_\cT^{z_{s-j+1}})$, then
	each column
	$(x_\cT^{(i)})_j$ is not in the span of the previous columns
	for $i\in[0;z_{s-j+1}-1]$, but $(x_\cT^{(z_{s-j+1})})_j$ is in the span of the previous columns, while it is not in the span of the columns
	$(x_\cT^{(1)})_1, (x_\cT^{(1)})_2, \ldots, (x_\cT^{(z_{s-j+1})})_{j-1}$.
	However, by recursive generation, $j\in \Dep_1(X_{\cT}^{z_{s-j+1}+1})$. 
	\hfill$\blacksquare$\\
	
	Let us define the permuted matrix
	\begin{align}
		\begin{split}
			\label{def:permuted}
			\row(\widecheck{X}_\cT^i)_{i\in [0;n]}
			:=
			\row(X_\cT^iQ)_{i\in [0;n]}.
		\end{split}
	\end{align} 
	Write
	\begin{equation}
		\label{block-decomposition-of-widehatYzj-semidefinite-case}
		\widecheck{X}_\cT^{i} =: 
		\big(\underbrace{(\widecheck{X}_\cT^i)_{(0)}}_{\substack{p_0\\\text{columns}}} \; 
		\underbrace{(\widecheck{X}_\cT^i)_{(1,0)}}_{\substack{p_{1,0}\\\text{columns}}} \; 
		\underbrace{(\widecheck{X}_\cT^i)_{(1,1)}}_{\substack{p_{1,1}\\\text{columns}}} \;
		\underbrace{(\widecheck{X}_\cT^{i})_{(2,0)}}_{\substack{p_{2,0}\\\text{columns}}} \; 
		\underbrace{(\widecheck{X}_\cT^{i})_{(2,1)}}_{\substack{p_{2,1}\\\text{columns}}} \;
		\cdots \; 
		\underbrace{(\widecheck{X}_\cT^i)_{(s,0)}}_{\substack{p_{s,0}\\\text{columns}}}\;
		\underbrace{(\widecheck{X}_\cT^i)_{(s,1)}}_{\substack{p_{s,1}\\\text{columns}}}\;
		\big).
	\end{equation}
	\smallskip

	\noindent \textbf{Claim 2.}
	There exists matrices 
	$H_0,H_1,\ldots,H_{n-1} \in M_{p\times (p-p_0)}(\RR),\widecheck{H}_{n,(0)}\in M_{p_0\times (p-p_0)}(\RR)$ 
	and an invertible matrix $\widecheck{H}_n\in M_{p-p_0}(\RR)$
	such that for
	\begin{equation}
		\label{def:G}
		G:=\col\big(
		\col(H_i)_{i\in [0;n-1]}, \widecheck{H}_{n,(0)}
		\big),
	\end{equation}
	we have
	\begin{align}
		\begin{split}
			\label{column-relation-v6-semidefinite-case}
			\row\big(
			(\widecheck{X}_{\cT}^{n})_{(i,0)}\;
			(\widecheck{X}_{\cT}^{n})_{(i,1)}
			\big)_{i\in [s]}\widecheck{H}_n
			&= 
			\begin{pmatrix} 
				\row({X}_{\cT}^{i})_{i\in [0;n-1]} & (\widecheck{X}_{\cT}^{n})_{(0)}
			\end{pmatrix}
			G.
		\end{split}
	\end{align}
	For $i\in [0;n-1]$ let $W_i:=\bigcap_{j=0}^i\Ker H_j$ and $s_i:=\dim W_i$. Write $r_j := \sum_{\ell=1}^{j}p_{\ell}$ for $j\in [s]$. Then
	\begin{align}
		\label{sum-of-dim-ker-Hi-proposition-psd-case}
		\begin{split}
			&\sum_{i=0}^{n-1}s_i
			\geq
			(n-z_s)(p-p_0) 
			+ \sum_{j=1}^{s-1}(z_{j+1}-z_j)(p-p_0-r_{s-j})
			+\Card A.
		\end{split}
	\end{align}
	
	\noindent \textit{Proof of Claim 2.}
	Let $r_0 := 0$. 
	By definition of the indices $z_j$,
	for $j\in [s]$ and $\ell=0,1$ there exist matrices 
	\begin{align}
		\label{relation-matrices}
		\begin{split}
			&\widecheck{B}_0^{(j,\ell)}, \widecheck{B}_1^{(j,\ell)}, \ldots, \widecheck{B}_{z_j - 1}^{(j,\ell)} \in M_{p\times p_{s-j+1,\ell}}(\RR), \quad
			\widecheck{B}_{z_j,(0)}^{(j,\ell)} \in M_{p_0\times p_{s-j+1,\ell}}(\RR)
			\quad\text{and}\quad\\ 
			&\widecheck{B}_{z_j}^{(j,\ell)} \in M_{(r_{s-j}+\sum_{i=0}^{\ell-1}p_{s-j+1,i})\times p_{s-j+1,\ell}}(\RR)
		\end{split}
	\end{align} 
	(note that if $\ell = 0$, then $\sum_{i=0}^{\ell-1}p_{s-j+1,i} = 0$) such that 
	\begin{align}
		\label{column-relation-semidefinite-case}
		\begin{split}& (\widecheck{X}_{\cT}^{z_j})_{(s-j+1,\ell)} = 
			\row(\widecheck{X}_{\cT}^{i})_{i\in [0;z_j-1]}
			\col(\widecheck{B}_{i}^{(j,\ell)})_{i\in [0;z_j-1]}+(\widecheck{X}_{\cT}^{z_j})_{(0)}\widecheck{B}_{z_j,(0)}^{(j,\ell)}\\
			&\hspace{0.8em}+
			\begin{pmatrix}
				\row\big(
				(\widecheck{X}_{\cT}^{z_j})_{(i,0)} \; 
				(\widecheck{X}_{\cT}^{z_j})_{(i,1)}
				\big)_{i\in [s-j]} &\row\big((\widecheck{X}_{\cT}^{z_j})_{(s-j+1,i)}\big)_{i \in [0;\ell-1]}
			\end{pmatrix}
			\widecheck{B}_{z_j}^{(j,\ell)}.
		\end{split}
	\end{align}
	Note that if $\ell = 0$, then $\row\big((\widecheck{X}_{\cT}^{z_j})_{(s-j+1,i)}\big)_{i \in [0;\ell-1]}$ is an empty matrix. By the ordering of the columns (see \eqref{order-permutation-Q}), note that $\widecheck{B}_0^{(j,1)} = \mathbf{0}_{p\times p_{s-j+1,\ell}}$ for each $j \in [s]$.
	By Proposition \ref{recursive-generation-if-measure}, 
	multiplying the relations in
	\eqref{column-relation-semidefinite-case} by $\widecheck{X}_\cT^{n-z_j}$, we obtain
	for $j\in [s]$ and $\ell=0,1$
	the relations
	\begin{align*}
		\begin{split}
			&
			\begin{pmatrix} 
				\row\big(
				(\widecheck{X}_{\cT}^n)_{(i,0)} \; 
				(\widecheck{X}_{\cT}^n)_{(i,1)}
				\big)_{i\in [s-j]} &\row\big((\widecheck{X}_{\cT}^n)_{(s-j+1,i)}\big)_{i \in [0;\ell-1]}
				&
				(\widecheck{X}_{\cT}^{n})_{(s-j+1,\ell)} 
			\end{pmatrix}
			\begin{pmatrix} 
				-\widecheck{B}_{z_j}^{(j,\ell)}\\[0.2em]
				I_{p_{s-j+1,\ell}}
			\end{pmatrix}\\[0.5em]
			=& 
			\row(\widecheck{X}_{\cT}^{n-z_j+i})_{i\in [0;z_j-1]}
			\col(\widecheck{B}_{i}^{(j,\ell)})_{i\in [0;z_j-1]}
			+
			(\widecheck{X}_{\cT}^{n})_{(0)}
			\widecheck{B}_{z_j,(0)}^{(j,\ell)}
		\end{split}
	\end{align*}
	of $M_\cT(n)$,
	or equivalently for $j\in [s]$ we have
	\begin{align}
		\label{column-relation-v3-semidefinite-case}
		\begin{split}
			\row\big(
			(\widecheck{X}_{\cT}^{n})_{(i,0)}\;
			(\widecheck{X}_{\cT}^{n})_{(i,1)}\;
			\big)_{i\in [s]}
			\widecheck{H}_{n}^{(j)} &= 
			\sum_{i=0}^{n-1} \widecheck{X}_\cT^i\widecheck{H}_{i}^{(j)}
			+
			(\widecheck{X}_{\cT}^{n})_{(0)}\widecheck{H}_{n,(0)}^{(j)},
		\end{split}
	\end{align}
	where
	\begin{align*}
		\begin{split}
			\widecheck{H}_i^{(j)} &:=
			\mathbf{0}_{p\times (p-p_0)}\quad \text{for }\; i\in [0;n-z_j-1],\\[0.2em]
			\widecheck{H}_i^{(j)} 
			&:= \big(
			\mathbf{0}_{p\times r_{s-j}}\;
			\widecheck{B}_{i-n+z_j}^{(j,0)}\;
			\widecheck{B}_{i-n+z_j}^{(j,1)}\;
			\mathbf{0}_{p\times (p-p_0-r_{s-j+1})}
			\big)\in M_{p\times (p-p_0)}(\RR)
			\quad \text{for }\; i\in [n-z_j;n-1],\\[0.3em]
			\widecheck{H}_{n,(0)}^{(j)} 
			&:= \big(
			\mathbf{0}_{p_0\times r_{s-j}}\;
			\widecheck{B}_{z_j,(0)}^{(j,0)}\;
			\widecheck{B}_{z_j,(0)}^{(j,1)}\;
			\mathbf{0}_{p_0\times (p-p_0-r_{s-j+1})}
			\big)\in M_{p_0\times (p-p_0)}(\RR),\\[0.3em]
			\widecheck{H}_{n}^{(j)} 
			&:= 
			\begin{pmatrix}
				\mathbf 0_{r_{s-j}} & 
				\begin{pmatrix} -\widecheck{B}_{z_j}^{(j,0)} 
					\\
					I_{p_{s-j+1,0}}
				\end{pmatrix}
				&\widecheck{B}_{z_j}^{(j,1)}\\[0.3em]
				\mathbf{0} & \mathbf{0}_{p_{s-j+1,1}\times p_{s-j+1,0}} & 
				I_{p_{s-j+1,1}}
			\end{pmatrix} \oplus \mathbf{0}_{p-p_0-r_{s-j+1}}
			\in M_{p-p_0}(\RR).
		\end{split}
	\end{align*}
	Let
	\begin{align*}
		\widecheck{B}_{z_j}^{(j,0)} &=: \col(\widecheck{B}_{z_j,i}^{(j,0)})_{i\in [s-j]} \quad \text{and} \\
		\widecheck{B}_{z_j}^{(j,1)} &=: \col\big(\col
		(\widecheck{B}_{z_j,i}^{(j,1)})_{i\in [s-j]}, \widecheck{B}_{z_j,s-j+1,0}^{(j,1)} \big)
		\quad \text{for }j\in [s],
	\end{align*}
	where $\widecheck{B}_{z_j,i}^{(j,\ell)} \in M_{p_i\times p_{s-j+1,\ell}}(\RR)$, $\widecheck{B}_{z_j,s-j+1,0}^{(j,1)} \in M_{p_{s-j+1,0}\times p_{s-j+1,1}}(\RR)$ for $j\in [s]$, $i \in [s-j]$ and $\ell = 0,1$.
	Summing all block column relations  \eqref{column-relation-v3-semidefinite-case} for $j\in [s]$, we get a block column relation
	\begin{equation}
		\label{column-relation-v4-semidefinite-case}
		\row\big(
		(\widecheck{X}_{\cT}^{n})_{(i,0)}\;
		(\widecheck{X}_{\cT}^{n})_{(i,1)}
		\big)_{i\in [s]}
		\widecheck{H}_{n}
		= 
		\sum_{i=0}^{n-1}\widecheck{X}_\cT^i\widecheck{H}_i + (\widecheck{X}_{\cT}^{n})_{(0)}\widecheck{H}_{n,(0)},
	\end{equation}
	where 
	\begin{align}
		\label{definition-check-H-i-psd-case}
		\begin{split}
			&\widecheck{H}_i 
			:= \sum_{j=1}^{s}\widecheck{H}_i^{(j)}\\
			=&
			\left\{
			\begin{array}{l}
				\mathbf{0}_{p\times (p-p_0)},    
				\qquad \text{for }i\in [0;n-z_s-1],\\[0.7em] 
				\Big(
				\underbrace{\widecheck{B}_{i-n+z_{s}}^{(s,0)}}_{\substack{p_{1,0}\\\text{columns}}} 
				\;
				\underbrace{\mathbf{0}}_{\substack{p-p_0-p_{1,0}\\\text{columns}}}
				\Big),\qquad
				\text{for }i = n-z_{s},\\[2em]
				\Big(
				\underbrace{\widecheck{B}_{i-n+z_{s}}^{(s,0)}}_{\substack{p_{1,0}\\\text{columns}}} 
				\;
				\underbrace{\widecheck{B}_{i-n+z_{s}}^{(s,1)}}_{\substack{p_{1,1}\\\text{columns}}} 
				\;
				\underbrace{\mathbf{0}}_{\substack{p-p_0-p_1\\\text{columns}}}
				\Big),\qquad
				\text{for }i\in [n-z_{s}+1;n-z_{s-1}-1],\\[2em]
				\Big(
				\underbrace{\widecheck{B}_{i-n+z_{s}}^{(s,0)}}_{\substack{p_{1,0}\\\text{columns}}} 
				\;
				\underbrace{\widecheck{B}_{i-n+z_{s}}^{(s,1)}}_{\substack{p_{1,1}\\\text{columns}}} 
				\;
				\underbrace{\widecheck{B}_{i-n+z_{s-1}}^{(s-1,0)}}_{\substack{p_{2,0}\\\text{columns}}} 
				\;
				\underbrace{\mathbf{0}}_{\substack{p-p_0-p_1-p_{2,0}\\\text{columns}}}
				\Big),\;
				\qquad \text{for }i = n-z_{s-1},\\[0.5em]
				\Big(
				\underbrace{\widecheck{B}_{i-n+z_{s}}^{(s,0)}}_{\substack{p_{1,0}\\\text{columns}}} 
				\;
				\underbrace{\widecheck{B}_{i-n+z_{s}}^{(s,1)}}_{\substack{p_{1,1}\\\text{columns}}} 
				\;
				\underbrace{\widecheck{B}_{i-n+z_{s-1}}^{(s-1,0)}}_{\substack{p_{2,0}\\\text{columns}}} 
				\;
				\underbrace{\widecheck{B}_{i-n+z_{s-1}}^{(s-1,1)}}_{\substack{p_{2,1}\\\text{columns}}} 
				\;
				\underbrace{\mathbf{0}}_{\substack{p-p_0-p_1-p_2\\\text{columns}}}
				\Big),\\[3em]
				\hfill\text{for }i\in [n-z_{s-1}+1; n-z_{s-2}-1],\\[0.5em]
				\vdots\\[0.7em]
				\Big(\row\big(\underbrace{\widecheck{B}_{i-n+z_{s-j+1}}^{(s-j+1,0)}}_{\substack{p_{j,0}\\\text{columns}}} 
				\;
				\underbrace{\widecheck{B}_{i-n+z_{s-j+1}}^{(s-j+1,1)}}_{\substack{p_{j,1}\\\text{columns}}}\big)_{j\in [s-1]}\;
				\underbrace{\widecheck{B}_{i-n+z_{1}}^{(1,0)}}_{\substack{p_{s,0}\\\text{columns}}} 
				\;
				\underbrace{\mathbf{0}}_{\substack{p-p_0-r_{s-1}-p_{s,0}\\\text{columns}}}
				\Big),\\[3em]
				\hfill\text{for }i = n-z_{1},\\[0.5em]
				\row\big(\underbrace{\widecheck{B}_{i-n+z_{s-j+1}}^{(s-j+1,0)}}_{\substack{p_{j,0}\\\text{columns}}} 
				\;
				\underbrace{\widecheck{B}_{i-n+z_{s-j+1}}^{(s-j+1,1)}}_{\substack{p_{j,1}\\\text{columns}}}\big)_{j\in [s]},
				\hfill\text{for }i\in [n-z_{1}+1; n-1],
			\end{array}
			\right.
		\end{split}
	\end{align}
	and
	\begin{align*}
		\widecheck H_{n,(0)}
		&=
		\row\big(\underbrace{\widecheck{B}_{z_{s-j+1,(0)}}^{(s-j+1,0)}}_{\substack{p_{j,0}\\\text{columns}}} 
		\;
		\underbrace{\widecheck{B}_{z_{s-j+1,(0)}}^{(s-j+1,1)}}_{\substack{p_{j,1}\\\text{columns}}}\big)_{j\in [s]},
		\\[0.5em]
		\widecheck{H}_{n}
		&=
		\begin{pmatrix}
			D_1 & 
			-\row\big(
			\widecheck{B}_{z_{s-1},1}^{(s-1,\ell)}
			\big)_{\ell=0,1}
			& 
			-\row\big(
			\widecheck{B}_{z_{s-2},1}^{(s-2,\ell)}
			\big)_{\ell=0,1}
			& \cdots & 
			-\row\big(
			\widecheck{B}_{z_{1},1}^{(1,\ell)}
			\big)_{\ell=0,1}\\[0.5em]
			\mathbf{0} & D_2 & 
			-\row\big(
			\widecheck{B}_{z_{s-2},2}^{(s-2,\ell)}
			\big)_{\ell=0,1}
			& \cdots & 
			-\row\big(
			\widecheck{B}_{z_{1},1}^{(1,\ell)}
			\big)_{\ell=0,1}\\[0.5em]
			\vdots &      \ddots   & D_3 & \ddots & \vdots \\[0.5em]
			\vdots &         &    \ddots     & \ddots & 
			-\row\big(
			\widecheck{B}_{z_{1},s-1}^{(1,\ell)}
			\big)_{\ell=0,1}\\[0.5em]
			\mathbf{0} & \cdots      &  \cdots         &    \mathbf{0}   &D_s
		\end{pmatrix},
	\end{align*}
	where $D_j := \begin{pmatrix}
		I_{p_{j,0}} & -\widecheck{B}_{z_{s-j+1},j,0}^{(s-j+1,1)}\\
		\mathbf{0} & I_{p_{j,1}}
	\end{pmatrix}$ for $j \in [s]$. Note that $\widecheck{H}_n$ is invertible. It follows from \eqref{column-relation-v4-semidefinite-case} that
	\begin{align*}
		\begin{split}
			\row\big(
			(\widecheck{X}_{\cT}^{n})_{(i,0)}\;
			(\widecheck{X}_{\cT}^{n})_{(i,1)}
			\big)_{i\in [s]}\widecheck{H}_n
			&= \sum_{i=0}^{n-1}{X}_{\cT}^{i} H_i+(\widecheck{X}_{\cT}^{n})_{(0)}\widecheck{H}_{n,(0)}\\
			&=
			\begin{pmatrix} 
				\row({X}_{\cT}^{i})_{i\in [0;n-1]} & (\widecheck{X}_{\cT}^{n})_{(0)}
			\end{pmatrix}
			G.
		\end{split}
	\end{align*}
	where $H_i := Q\widecheck{H}_i$ for $i\in [0;n-1]$
	and 
	$G$ is as in \eqref{def:G}.
	This proves \eqref{column-relation-v6-semidefinite-case}.
	It remains to prove \eqref{sum-of-dim-ker-Hi-proposition-psd-case}.
	For $i\in [0;n-1]$ let $s_i$ be defined as in the statement of Claim 2.
	Using \eqref{definition-check-H-i-psd-case} we have
	\begin{align}
		\label{kernel-dimensions-psd-case}
		\begin{split}
			s_i
			&=
			\left\{
			\begin{array}{rl}
				p-p_0,& \text{for }i\in [0;n-z_s-1],\\[0.2em]
				\text{at least } p-p_0-p_{1,0},&
				\text{for }i = n-z_s,\\[0.2em]
				\text{at least } p-p_0-p_1,&
				\text{for }i\in [n-z_s+1;n-z_{s-1}-1],\\[0.2em]
				\text{at least } p-p_0-p_1-p_{2,0},&
				\text{for }i = n-z_{s-1},\\[0.2em]
				\text{at least } p-p_0-p_1-p_2,&
				\text{for }i\in [n-z_{s-1}+1;n-z_{s-2}-1],\\[0.2em]
				\vdots&\\
				\text{at least } p-p_0-r_{s-2}-p_{s-1,0},&
				\text{for }i = n-z_{2},\\[0.2em]
				\text{at least } p-p_0-r_{s-1},&
				\text{for }i\in [n-z_{2}+1;n-z_{1}-1],\\[0.2em]
				\text{at least } p-p_0-r_{s-1}-p_{s,0},&
				\text{for }i = n-z_{1}.
			\end{array}
			\right.
		\end{split}
	\end{align}
	It follows from \eqref{kernel-dimensions-psd-case} that
	\begin{align*}
		\begin{split}
			\sum_{i=0}^{n-1}s_i
			&\geq (n-z_s)(p-p_0) 
			+ \sum_{j=1}^{s}(p-p_0-r_{s-j}-p_{s-j+1,0})\\
			&\hspace{11em}+
			\sum_{j=1}^{s-1}(z_{j+1}-z_j-1)(p-p_0-r_{s-j})\\
			&= (n-z_s)(p-p_0) + \sum_{j=1}^{s-1}(z_{j+1}-z_j)(p-p_0-r_{s-j}) + \underbrace{p-p_0-\sum_{j=1}^{s}p_{s-j+1,0}}_{=\sum_{j=1}^{s}p_{j,1}=\Card A},
		\end{split}
	\end{align*}
	which is \eqref{sum-of-dim-ker-Hi-proposition-psd-case}.
	This concludes the proof of Claim 2.\hfill$\blacksquare$\\
	
	The continuation of the proof will be similar to the proof of implication $(2) \Rightarrow (1)$ in \cite[Theorem 1.1]{ZZ25}, while accounting for the column relations of the matrix $M_{\mathcal T}(n)$, which are transferred to the matrix $K$ in \eqref{def-mathcalH0-etc} by recursive generation.
	Write 
	\begin{align}
		\label{def-mathcalH0-etc}
		X_{\cT}^0 =: \begin{pmatrix}
			\mathcal{H}_0 \\
			T_n
		\end{pmatrix} \quad \text{and} \quad 
		\row(X_{\cT}^i)_{i\in [n]}=:
		\begin{pmatrix}
			\mathcal{H} \\
			K^T
		\end{pmatrix}\in 
		\begin{pmatrix}
			\Sym_{np}(\RR)\\ M_{p\times np}(\RR)
		\end{pmatrix}.
	\end{align}
	
	Let 
	\begin{equation} 
		\label{rank-increasement-semidefinite-case}
		k := 
		\Rank 
		\begin{pmatrix}
			\mathcal{H} \\ K^T
		\end{pmatrix} 
		- 
		\Rank \mathcal{H}
		=
		\Rank 
		\begin{pmatrix}
			\mathcal{H} & K
		\end{pmatrix} 
		- 
		\Rank \mathcal{H}.
	\end{equation}
	Let $B$ be as in the statement of Theorem \ref{th:mainTheorem}.
	Note that 
	$$
	B\subseteq [p] \setminus \Dep(X_{\cT}^{z_s})
	\qquad\text{and}\qquad
	\Card\big([p] \setminus 
	\big(B\sqcup \Dep(X_{\cT}^{z_s})\big)\big)=k.
	$$
	(Here $\sqcup$ stands for the disjoint union.)
	Let $P_1\in M_{p_0}(\RR)$ be a permutation matrix,
	which permutes the columns in 
	$[p] \setminus \Dep(X_{\cT}^{z_s})$
	to the order 
	$C_1,C_2$,
	where
	$$
	[p] \setminus 
	\big(B\sqcup \Dep(X_{\cT}^{z_s})\big)
	\subseteq 
	C_1
	\qquad\text{and}\qquad
	k\leq \Card C_1:=p_0-m+\Card A.
	$$
	By the assumption $\Card A \leq m \leq \Card(A \cup B)$, it follows that $0 \leq \Card C_1 \leq p_0$. Recall that $Q$ permutes the columns of the given matrix, when multiplied from the right, to the order
	\eqref{order-permutation-Q}.
	Then for the permutation matrix
	$P := Q(P_1 \oplus I_{p-p_0})\in M_p(\RR)$ and
	\begin{align}
		\label{def-widehatK}
		\widehat{K} \equiv 
		\big(
		\underbrace{\widehat{K}_1}_{\substack{p_0-m+\Card A\\\text{columns}}} \;
		\underbrace{\widehat{K}_2}_{\substack{m-\Card A\\\text{columns}}}  \;
		\underbrace{\widehat{K}_3}_{\substack{p-p_0\\\text{columns}}} 
		\big) 
		:= KP,
	\end{align}
	we have that 
	\begin{equation}
		\label{rank-equalities-semidefinite-case}
		\Rank 
		\begin{pmatrix}
			\mathcal{H} & \widehat{K}_1
		\end{pmatrix} 
		= 
		\Rank 
		\begin{pmatrix}
			\mathcal{H} & \widehat{K}
		\end{pmatrix} 
		= 
		\Rank \mathcal{H} + k.	
	\end{equation}
	By
	\eqref{column-relation-v6-semidefinite-case} and recursive generation (see Proposition \ref{recursive-generation-if-measure}),
	it follows that  
	\begin{equation}
		\label{hat-K-3-psd-case}
		\widehat{K}_3 = \begin{pmatrix}
			\mathcal{H} & \widehat{K}_1 & \widehat{K}_2
		\end{pmatrix} \widehat{G},
	\end{equation}
	where
	\begin{equation}
		\label{def-widehatG}
		\widehat{G} := (I_{np}\oplus P_1^T) G \widecheck{H}_n^{-1},
	\end{equation}
	or equivalently
	\begin{equation}
		\label{rank-equality-v11-semidefinite-case}
		\begin{pmatrix}
			\mathcal{H} & 
			\widehat{K}_1 & 
			\widehat{K}_2 &
			\widehat{K}_3
		\end{pmatrix}  
		\begin{pmatrix}
			-\widehat{G} \\
			I_{p-p_0}
		\end{pmatrix}
		= \mathbf{0}_{np\times (p-p_0)}.
	\end{equation}
	By \eqref{rank-equalities-semidefinite-case}, it     
	follows that 
	\begin{equation}
		\label{definition-hat-K-2-psd}
		\widehat{K}_2 = \begin{pmatrix}
			\mathcal{H} & \widehat{K}_1
		\end{pmatrix} J
	\end{equation}
	for some real matrix  
	$J\in M_{(np + p_0-m+\Card A) \times (m-\Card A)}(\RR)$,
	or equivalently
	\begin{equation}
		\label{rank-equality-v2-semidefinite-case}
		\begin{pmatrix}
			\mathcal{H} & 
			\widehat{K}_1 & 
			\widehat{K}_2
		\end{pmatrix}  
		\begin{pmatrix}
			-J \\
			I_{m-\Card A}
		\end{pmatrix}
		=\mathbf{0}_{np\times (m-\Card A)}
	\end{equation}
	\smallskip
	
	\noindent \textbf{Claim 3.} There exists 
	$\widehat{Z} \in \Sym_p(\RR)$ such that 
	\begin{equation}
		\label{choice-of-hat-Z-semidefinite-case}
		\Rank
		\begin{pmatrix}
			\cH & \widehat{K}\\
			\widehat{K}^T & \widehat{Z}
		\end{pmatrix}
		=
		\Rank 
		\begin{pmatrix} 
			\mathcal{H}\\
			\widehat{K}^T
		\end{pmatrix}    
		+ p_0-m+\Card A
	\end{equation}
	and
	\begin{equation}
		\label{eq:to-prove-280925}
		\begin{pmatrix}
			\mathcal{H} & \widehat{K} \\
			\widehat{K}^T & \widehat{Z}
		\end{pmatrix}
		\begin{pmatrix}
			-J_1 & -\widehat{G}_1\\
			-J_2 & -\widehat{G}_2 \\
			I_{m-\Card A} & -\widehat{G}_3 \\
			\mathbf{0} & I_{p-p_0}
		\end{pmatrix} = \mathbf{0}_{(n+1)p\times (m-\Card A+p-p_0)},
	\end{equation}
	where 
	\begin{align*} 
		J&=:
		\begin{pmatrix} J_1 \\ J_2 \end{pmatrix}
		\in 
		\begin{pmatrix}
			M_{np\times (m-\Card A)}(\RR)\\
			M_{(p_0-m+\Card A)\times (m-\Card A)}(\RR)
		\end{pmatrix},\\
		\widehat{G}&=:
		\begin{pmatrix} \widehat{G}_1 \\ \widehat{G}_2 \\ \widehat{G}_3 \end{pmatrix}
		\in 
		\begin{pmatrix}
			M_{np\times (p-p_0)}(\RR)\\
			M_{(p_0-m+\Card A)\times (p-p_0)}(\RR)\\
			M_{(m-\Card A)\times (p-p_0)}(\RR)
		\end{pmatrix}.
	\end{align*}
	\smallskip
	
	\noindent\textit{Proof of Claim 3.}
	By \eqref{rank-equality-v11-semidefinite-case} and \eqref{rank-equality-v2-semidefinite-case},
	to prove \eqref{eq:to-prove-280925}
	it suffices to establish the equality 
	\begin{equation}
		\label{eq:definition-of-widehatZ-semidefinite-case}
		\begin{pmatrix}
			\widehat{K}^T & \widehat{Z}
		\end{pmatrix} 
		\begin{pmatrix}
			-J_1 & -\widehat{G}_1\\
			-J_2 & -\widehat{G}_2 \\
			I_{m-\Card A} & -\widehat{G}_3 \\
			\mathbf{0} & I_{p-p_0}
		\end{pmatrix} = 
		\mathbf{0}_{p\times (m-\Card A+p-p_0)}.
	\end{equation}
	Let us write 
	\begin{equation}
		\label{def:Z-semidefinite-case}
		\widehat{Z} 
		:= 
		\begin{pmatrix}
			\widehat{Z}_1 & \widehat{Z}_2 & \widehat{Z}_3 \\[0.2em]
			\widehat{Z}_2^T & \widehat{Z}_4 & \widehat{Z}_5 \\[0.2em]
			\widehat{Z}_3^T & \widehat{Z}_5^T & \widehat{Z}_6 
		\end{pmatrix},
	\end{equation}
	where 
	$\widehat{Z}_1\in M_{p_0-m+\Card A}(\RR)$, 
	$\widehat{Z}_2\in M_{(p_0-m+\Card A)\times (m-\Card A)}(\RR)$, 
	$\widehat{Z}_3\in M_{(p_0-m+\Card A)\times (p-p_0)}(\RR)$, 
	$\widehat{Z}_4\in M_{m-\Card A}(\RR)$, 
	$\widehat{Z}_5\in M_{(m-\Card A)\times (p-p_0)}(\RR)$ and $\widehat{Z}_6\in M_{p-p_0}(\RR)$.
	In this notation, \eqref{eq:definition-of-widehatZ-semidefinite-case} becomes
	\begin{equation*}
		\begin{pmatrix}
			\widehat{K}_1^T & \widehat{Z}_1 & \widehat{Z}_2 & \widehat{Z}_3\\[0.2em]
			\widehat{K}_2^T & \widehat{Z}_2^T & \widehat{Z}_4 & \widehat{Z}_5\\[0.2em]
			\widehat{K}_3^T & \widehat{Z}_3^T & \widehat{Z}_5^T & \widehat{Z}_6\\
		\end{pmatrix} 
		\begin{pmatrix}
			-J_1 & -\widehat{G}_1\\
			-J_2 & -\widehat{G}_2 \\
			I_{m-\Card A} & -\widehat{G}_3 \\
			\mathbf{0} & I_{p-p_0}
		\end{pmatrix}
		= \mathbf{0}_{p\times (m-\Card A+p-p_0)}.
	\end{equation*}
	We choose $\widehat{Z}_1 \in \Sym_{p_0-m+\Card A}(\RR)$ such that
	\begin{equation}
		\label{choice-of-hat-Z1-semidefinite-case}
		\Rank
		\begin{pmatrix}
			\cH & \widehat{K}_1\\
			\widehat{K}_1^T & \widehat{Z}_1
		\end{pmatrix}
		=
		\Rank 
		\begin{pmatrix} 
			\mathcal{H} \\
			\widehat{K}_1^T
		\end{pmatrix} 
		+ p_0-m+\Card A
	\end{equation}
	and define 
	\begin{equation}
		\begin{alignedat}{2}
			\label{equality-v3-semidefinite-case}
			\widehat{Z}_2 &:= \begin{pmatrix}
				\widehat{K}_1^T & \widehat{Z}_1
			\end{pmatrix}J,\qquad 
			&\widehat{Z}_3 &:= \begin{pmatrix}
				\widehat{K}_1^T & \widehat{Z}_1 & \widehat{Z}_2
			\end{pmatrix}\widehat{G},\\
			\widehat{Z}_4 &:= \begin{pmatrix}
				\widehat{K}_2^T & \widehat{Z}_2^T
			\end{pmatrix}J,\quad 
			&\widehat{Z}_5 &:= \begin{pmatrix}
				\widehat{K}_2^T & \widehat{Z}_2^T & \widehat{Z}_4
			\end{pmatrix}\widehat{G},\\
			&&\widehat{Z}_6 &:= \begin{pmatrix}
				\widehat{K}_3^T & \widehat{Z}_3^T & \widehat{Z}_5^T
			\end{pmatrix}\widehat{G}.
		\end{alignedat}
	\end{equation}
	Once we prove \eqref{eq:definition-of-widehatZ-semidefinite-case},
	\eqref{choice-of-hat-Z1-semidefinite-case} will also imply
	\eqref{choice-of-hat-Z-semidefinite-case}.
	
	Let us check that $\widehat{Z}$ is symmetric.
	Since $\widehat{Z}_1\in \Sym_{p_0-m+\Card A}(\RR)$, we only need to show that $\widehat{Z}_4\in \Sym_{m-\Card A}(\RR)$ and  $\widehat{Z}_6\in \Sym_{p-p_0}(\RR)$. But this follows by the following computations:
	\begin{align*} 
		\widehat{Z}_4 
		&\underbrace{=}_{\eqref{equality-v3-semidefinite-case}} \begin{pmatrix}
			\widehat{K}_2^T & \widehat{Z}_2^T
		\end{pmatrix}J= \begin{pmatrix}
			\widehat{K}_2 \\
			\widehat{Z}_2
		\end{pmatrix}^T  J 
		\underbrace{=}_{
			\substack{\eqref{definition-hat-K-2-psd},\\\eqref{equality-v3-semidefinite-case}}} \begin{pmatrix}
			\begin{pmatrix}
				\mathcal{H} & \widehat{K}_1\\
				\widehat{K}_1^T & \widehat{Z}_1
			\end{pmatrix} J
		\end{pmatrix}^T J 
		= J^T \begin{pmatrix}
			\mathcal{H} & \widehat{K}_1 \\[0.2em]
			\widehat{K}_1^T & \widehat{Z}_1
		\end{pmatrix}^T J, \\
		\widehat{Z}_6 
		&\underbrace{=}_{\eqref{equality-v3-semidefinite-case}} \begin{pmatrix}
			\widehat{K}_3^T & \widehat{Z}_3^T & \widehat{Z}_5^T
		\end{pmatrix}\widehat{G} = \begin{pmatrix}
			\widehat{K}_3 \\
			\widehat{Z}_3 \\
			\widehat{Z}_5
		\end{pmatrix}^T \widehat{G} 
		\underbrace{=}_{\substack{\eqref{hat-K-3-psd-case},\\\eqref{equality-v3-semidefinite-case}}} 
		\begin{pmatrix}
			\begin{pmatrix}
				\mathcal{H} & \widehat{K}_1 & \widehat{K}_2 \\[0.2em]
				\widehat{K}_1^T & \widehat{Z}_1 & \widehat{Z}_2 \\[0.2em]
				\widehat{K}_2^T & \widehat{Z}_2^T & \widehat{Z}_4
			\end{pmatrix}\widehat{G}
		\end{pmatrix}^T \widehat{G}\\
		&= \widehat{G}^T \begin{pmatrix}
			\mathcal{H} & \widehat{K}_1 & \widehat{K}_2 \\
			\widehat{K}_1^T & \widehat{Z}_1 & \widehat{Z}_2 \\
			\widehat{K}_2^T & \widehat{Z}_2^T & \widehat{Z}_4
		\end{pmatrix} \widehat{G}.
	\end{align*}            
	
	We now observe the following:
	\begin{align}
		\label{widehatZ5Tequality}
		\begin{split}
			\widehat{Z}_5^T 
			&\underbrace{=}_{\eqref{equality-v3-semidefinite-case}}
			\left (\begin{pmatrix}
				\widehat{K}_2^T & \widehat{Z}_2^T & \widehat{Z}_4
			\end{pmatrix}\widehat{G}\right )^T=
			\widehat{G}^T\begin{pmatrix}
				\widehat{K}_2 \\
				\widehat{Z}_2 \\
				\widehat{Z}_4
			\end{pmatrix} 
			\underbrace{=}_{
				\substack{\eqref{definition-hat-K-2-psd},\\\eqref{equality-v3-semidefinite-case}}}
			\widehat{G}^T\begin{pmatrix}
				\mathcal{H} & \widehat{K}_1 \\
				\widehat{K}_1^T & \widehat{Z}_1 \\
				\widehat{K}_2^T & \widehat{Z}_2^T \\
			\end{pmatrix}J \\
			&= \left (\begin{pmatrix}
				\mathcal{H} & \widehat{K}_1 & \widehat{K}_2 \\
				\widehat{K}_1^T & \widehat{Z}_1 & \widehat{Z}_2
			\end{pmatrix}\widehat{G}\right )^T J 
			\underbrace{=}_{\substack{\eqref{hat-K-3-psd-case},\\\eqref{equality-v3-semidefinite-case}}} 
			\begin{pmatrix}
				\widehat{K}_3 \\
				\widehat{Z}_3
			\end{pmatrix}^T J =
			\begin{pmatrix}
				\widehat{K}_3^T & \widehat{Z}_3^T
			\end{pmatrix}J.
		\end{split}
	\end{align}
	From the definitions of $\widehat{Z}_i$, $i\in [2;6]$, 
	and the equality \eqref{widehatZ5Tequality}, 
	it is clear that the matrix $\widehat{Z}$ satisfies \eqref{eq:definition-of-widehatZ-semidefinite-case}.
	This proves Claim 3.\hfill$\blacksquare$\\
	
	Define
	the vectors $c_1, c_2, \ldots, c_{m-\Card A+p-p_0}$
	by 
	$$
	C \equiv 
	\row(c_i)_{i\in [m-\Card A+p-p_0]}
	:= (I_{np}\oplus P)
	\begin{pmatrix}
		-J_1 & -\widehat{G}_1\\
		-J_2 & -\widehat{G}_2 \\
		I_{m-\Card A} & -\widehat{G}_3 \\
		\mathbf{0} & I_{p-p_0}
	\end{pmatrix},
	$$
	Note that $c_1, c_2, \ldots, c_{m-\Card A+p-p_0}$ are linearly independent.
	Next define the matrix $Z$ by 
	$$Z:=P\widehat ZP^{T}\in \Sym_p(\RR).$$
	\smallskip
	
	\noindent \textbf{Claim 4.}
	$
	c_1, c_2, \ldots, c_{m-\Card A+p-p_0}
	\in 
	\Ker
	\begin{pmatrix}
		\mathcal{H} & K \\
		K^T & Z
	\end{pmatrix}.
	$\\
	
	\noindent \textit{Proof of Claim 4.}
	We have
	\begin{align*}
		\begin{pmatrix}
			\mathcal{H} & K \\
			K^T & Z
		\end{pmatrix}C &= \begin{pmatrix}
			\mathcal{H} & \widehat{K}P^T \\
			P\widehat{K}^T & P\widehat{Z}P^T
		\end{pmatrix}(I_{np}\oplus P)
		\begin{pmatrix}
			-J_1 & -\widehat{G}_1\\
			-J_2 & -\widehat{G}_2 \\
			I_{m-\Card A} & -\widehat{G}_3 \\
			\mathbf{0} & I_{p-p_0}
		\end{pmatrix}
		\\ &= (I_{np}\oplus P)\begin{pmatrix}
			\mathcal{H} & \widehat{K} \\
			\widehat{K}^T & \widehat{Z}
		\end{pmatrix}(I_{np}\oplus P^T)(I_{np}\oplus P)
		\begin{pmatrix}
			-J_1 & -\widehat{G}_1\\
			-J_2 & -\widehat{G}_2 \\
			I_{m-\Card A} & -\widehat{G}_3 \\
			\mathbf{0} & I_{p-p_0}
		\end{pmatrix}
		\\ &= (I_{np}\oplus P)\begin{pmatrix}
			\mathcal{H} & \widehat{K} \\
			\widehat{K}^T & \widehat{Z}
		\end{pmatrix}
		\begin{pmatrix}
			-J_1 & -\widehat{G}_1\\
			-J_2 & -\widehat{G}_2 \\
			I_{m-\Card A} & -\widehat{G}_3 \\
			\mathbf{0} & I_{p-p_0}
		\end{pmatrix}
		\\ &\underbrace{=}_{\eqref{eq:to-prove-280925}} (I_{np}\oplus P)\mathbf{0}_{(n+1)p\times (m-\Card A+p-p_0)} = \mathbf{0}_{(n+1)p\times (m-\Card A+p-p_0)}.
	\end{align*}
	This proves Claim 4.\hfill$\blacksquare$\\
	
	Define
	\begin{align}
		\label{def-widehatTn}
		\widehat{T}_n \equiv 
		\big(
		\underbrace{\widehat{T}_{n,1}}_{\substack{p_0-m+\Card A\\\text{columns}}} \; \underbrace{\widehat{T}_{n,2}}_{\substack{m-\Card A\\\text{columns}}} \; 
		\underbrace{\widehat{T}_{n,3}}_{\substack{p-p_0\\\text{columns}}}
		\big) := T_nP
		=T_nQ(P_1 \oplus I_{p-p_0}).
	\end{align}
	
	\noindent \textbf{Claim 5.} Recall the notation in \eqref{def-mathcalH0-etc}. We have
	\begin{align}
		\label{K-Z-columns-dependancy}
		\Rank \begin{pmatrix} 
			\mathcal{H}_0 & \mathcal{H} & \widehat{K} \\
			\widehat{T}_n^T & \widehat{K}^T & \widehat{Z}
		\end{pmatrix} = \Rank \begin{pmatrix} 
			\mathcal{H}_0 & \mathcal{H} \\
			\widehat{T}_n^T & \widehat{K}^T
		\end{pmatrix}
		=
		\Rank M_{\cT}(n).
	\end{align}
	
	\noindent \textit{Proof of Claim 5.}
	Since 
	$$\begin{pmatrix} 
		\mathcal{H}_0 & \mathcal{H} \\
		\widehat{T}_n^T & \widehat{K}^T
	\end{pmatrix} = (I_{np} \oplus P^T)\begin{pmatrix} 
		\mathcal{H}_0 & \mathcal{H} \\
		T_n & K^T
	\end{pmatrix} = (I_{np} \oplus P^T)M_{\cT}(n),$$
	the nontrivial equality in \eqref{K-Z-columns-dependancy}
	is the first one.
	Observe that
	\begin{align}
		\label{K-Z-columns-dependancy-v2}
		\begin{split}
			&\begin{pmatrix} 
				\mathcal{H}_0^T & \widehat{T}_n \\
				\mathcal{H} &\widehat{K}
			\end{pmatrix} \\
			&= M_{\cT}(n)(I_{np} \oplus P) \\
				&= \begin{pmatrix}
					\row({X}_{\cT}^{i})_{i\in [0;n-1]} & 
					\widecheck{X}_{\cT}^{n}(P_1 \oplus I_{p-p_0})
				\end{pmatrix} \\
				&= 
				\begin{pmatrix}
					\row({X}_{\cT}^{i})_{i\in [0;n-1]} & 
					(\widecheck{X}_{\cT}^{n})_{(0)}P_1 &
					\row\big(
					(\widecheck{X}_{\cT}^{n})_{(i,0)}\;
					(\widecheck{X}_{\cT}^{n})_{(i,1)}
					\big)_{i\in [s]}
				\end{pmatrix} \\
				&=
				\Big(
				\underbrace{\begin{pmatrix} 
						\row({X}_{\cT}^{i})_{i\in [0;n-1]} & (\widecheck{X}_{\cT}^{n})_{(0)}
					\end{pmatrix}(I_{np} \oplus P_1)}_{= \begin{pmatrix}
						\mathcal{H}_0^T & \widehat{T}_{n,1} &
						\widehat{T}_{n,2} \\
						\mathcal{H} & \widehat{K}_1 & \widehat{K}_2
				\end{pmatrix}} \quad \; 
				\underbrace{\row\big(
					(\widecheck{X}_{\cT}^{n})_{(i,0)}\;
					(\widecheck{X}_{\cT}^{n})_{(i,1)}
					\big)_{i\in [s]}}_{= \begin{pmatrix}
						\widehat{T}_{n,3} \\
						\widehat{K}_3
				\end{pmatrix}}
				\Big).
			\end{split}
		\end{align}
		By \eqref{column-relation-v6-semidefinite-case}, 
		\eqref{def-widehatG} and \eqref{K-Z-columns-dependancy-v2}, 
		it follows that
		\begin{align}
			\label{column-relation-v7-semidefinite-case}
			\widehat{T}_{n,3} = 
			\begin{pmatrix}
				\mathcal{H}_0^T 
				& \widehat{T}_{n,1} 
				& \widehat{T}_{n,2} 
			\end{pmatrix}\widehat{G}.
		\end{align}
		We have
		\begin{align}
			\label{auxiliary-rank-equalities}
			\begin{split}
				&\Rank \begin{pmatrix} 
					\mathcal{H}_0 & \mathcal{H} & \widehat{K} \\
					\widehat{T}_n^T & \widehat{K}^T & \widehat{Z}
				\end{pmatrix}
				\underbrace{=}_{(\ast)^T}
				\Rank \begin{pmatrix} 
					\mathcal{H}_0^T & \widehat{T}_n\\
					\mathcal{H} & \widehat{K} \\
					\widehat{K}^T & \widehat{Z}
				\end{pmatrix}\\
				=&
				\Rank \begin{pmatrix} 
					\mathcal{H}_0^T & \widehat{T}_{n,1} & 
					\widehat{T}_{n,2} & \widehat{T}_{n,3}\\
					\mathcal{H} & \widehat{K}_1 & \widehat{K}_2 & \widehat{K}_3\\
					\widehat{K}_1^T & \widehat{Z}_{1} & \widehat{Z}_{2} & \widehat{Z}_{3}\\
					\widehat{K}_2^T & \widehat{Z}_{2}^T & \widehat{Z}_{4} & \widehat{Z}_{5}\\
					\widehat{K}_3^T & \widehat{Z}_{3}^T & \widehat{Z}_{5}^T & \widehat{Z}_{6}
				\end{pmatrix}
				\underbrace{=}_{\substack{
						\eqref{hat-K-3-psd-case},\\
						\eqref{equality-v3-semidefinite-case},
						\\\eqref{column-relation-v7-semidefinite-case}}}
				\Rank \begin{pmatrix} 
					\mathcal{H}_0^T & \widehat{T}_{n,1} & 
					\widehat{T}_{n,2} \\
					\mathcal{H} & \widehat{K}_1 & \widehat{K}_2 \\
					\widehat{K}_1^T & \widehat{Z}_{1} & \widehat{Z}_{2} \\
					\widehat{K}_2^T & \widehat{Z}_{2}^T & \widehat{Z}_{4} \\
					\widehat{K}_3^T & \widehat{Z}_{3}^T & \widehat{Z}_{5}^T 
				\end{pmatrix}\\ 
				\underbrace{=}_{(\ast)^T}&      
				\Rank \begin{pmatrix} 
					\mathcal{H}_0 & \mathcal{H} & \widehat{K}_1 & \widehat{K}_2         
					&\widehat{K}_3\\[0.2em]
					\widehat{T}_{n,1}^T & \widehat{K}_1^T & \widehat{Z}_1 & 
					\widehat{Z}_2 & \widehat{Z}_3\\[0.2em]
					\widehat{T}_{n,2}^T & \widehat{K}_2^T & \widehat{Z}_2^T &       
					\widehat{Z}_4 & \widehat{Z}_5\\
				\end{pmatrix} 
				\underbrace{=}_{\substack{\eqref{hat-K-3-psd-case},\\
						\eqref{definition-hat-K-2-psd},\\
						\eqref{equality-v3-semidefinite-case}
				}}
				\Rank \begin{pmatrix} 
					\mathcal{H}_0 & \mathcal{H} & \widehat{K}_1 \\[0.2em]
					\widehat{T}_{n,1}^T & \widehat{K}_1^T & \widehat{Z}_1\\[0.2em]
					\widehat{T}_{n,2}^T & \widehat{K}_2^T & \widehat{Z}_2^T
				\end{pmatrix}.
			\end{split}
		\end{align}
		To prove \eqref{K-Z-columns-dependancy}, \eqref{auxiliary-rank-equalities}
		implies that 
		it suffices to show that
		\begin{align}
			\label{K1-Z1-Z2T-columns-dependancy}
			\Rank \begin{pmatrix} 
				\mathcal{H}_0 & \mathcal{H} & \widehat{K}_1 \\[0.2em]
				\widehat{T}_{n,1}^T & \widehat{K}_1^T & \widehat{Z}_1\\[0.2em]
				\widehat{T}_{n,2}^T & \widehat{K}_2^T & \widehat{Z}_2^T
			\end{pmatrix} 
			=
			\Rank M_{\cT}(n).
		\end{align}
		By \eqref{hat-K-3-psd-case}, \eqref{column-relation-v7-semidefinite-case} \and the ordering of the columns (see \eqref{order-permutation-Q}) of the matrix
		$$
		\begin{pmatrix}
			\cH_0^T & \widehat T_{n,1} & \widehat T_{n,2}\\
			\cH & \widehat K_1 & \widehat{K}_2
		\end{pmatrix},
		$$
		note that
		\begin{equation}
			\label{290925-1857}
			\Rank M_{\cT}(n)
			=
			\Bigg(
			\begin{array}{c}
				\cH_{0}^T \\
				\cH
			\end{array}
			\;
			\underbrace{\begin{array}{cc}
					\widehat T_{n,1} & \widehat T_{n,2}\\ 
					\widehat K_1 & \widehat{K}_2
			\end{array}}_{p_0\text{ columns}}
			\Bigg)               
			=
			\Rank\begin{pmatrix}
				\cH_0^T \\
				\cH 
			\end{pmatrix}
			+p_0.
		\end{equation}
		We permute the columns of 
		$\begin{pmatrix}
			\cH_0^T \\
			\cH 
		\end{pmatrix}
		$
		to
		$
		\begin{pmatrix}
			\cH_0^T \\
			\cH
		\end{pmatrix}P_2=:
		\begin{pmatrix}
			\cH_{0,1}^T & \cH_{0,2}^T\\
			\cH_1^T & \cH_2^T
		\end{pmatrix},
		$
		using a permutation matrix $P_2 \in M_{np}(\RR)$,
		such that
		\begin{equation}
			\label{reordering-290925}
			\Rank 
			\begin{pmatrix}
				\cH_{0,1} & \cH_1 \\ 
				\cH_{0,2} & \cH_2
			\end{pmatrix}
			\underbrace{=}_{(\ast)^T}
			\Rank
			\begin{pmatrix}
				\cH_0^T \\ \cH^T
			\end{pmatrix}
			=
			\Rank
			\begin{pmatrix}
				\cH_{0,2}^T \\ \cH_{2}^T
			\end{pmatrix}
			\underbrace{=}_{(\ast)^T}
			\Rank            
			\begin{pmatrix} 
				\cH_{0,2} & \cH_2
			\end{pmatrix}
		\end{equation}
		and
		\begin{equation}
			\label{reordering-290925-v2}
			\Ker 
			\begin{pmatrix}
				\cH_{0,2}^T & \widehat T_{n,1} & \widehat T_{n,2}\\
				\cH_2^T & 
				\widehat K_1 & \widehat{K}_2
			\end{pmatrix}
			\underbrace{=}_{\eqref{290925-1857}}\{\mathbf{0}\}.
		\end{equation}
		Apply the same permutation $P_2$          
		on $\widehat{K}_1^T$
		to obtain
		$\widehat{K}_1^TP_2=:\begin{pmatrix}
			(\widehat{K}_1^{(1)})^T & (\widehat{K}_1^{(2)})^T
		\end{pmatrix}$.
		It follows from \eqref{reordering-290925-v2} that 
		$$
		\cC
		\begin{pmatrix} 
			\mathcal{H}_{0,2} & \mathcal{H}_2 \\[0.2em]
			\widehat{T}_{n,1}^T & \widehat{K}_1^T \\[0.2em]
			\widehat{T}_{n,2}^T & \widehat{K}_2^T
		\end{pmatrix}=\RR^{\Rank M_{\cT}(n)}
		\quad\text{and}\quad
		\begin{pmatrix} 
			\mathcal{H}_{0,2} & \mathcal{H}_2 \\[0.2em]
			\widehat{T}_{n,1}^T & \widehat{K}_1^T \\[0.2em]
			\widehat{T}_{n,2}^T & \widehat{K}_2^T
		\end{pmatrix}
		\text{ is surjective}, 
		$$ 
		whence
		\begin{align}
			\label{rank-equality-v4-semidefinite-case}
			\Rank \begin{pmatrix} 
				\mathcal{H}_{0,2} & \mathcal{H}_2 &  \widehat{K}_1^{(2)} \\[0.2em]
				\widehat{T}_{n,1}^T & \widehat{K}_1^T & \widehat{Z}_1 \\[0.2em]
				\widehat{T}_{n,2}^T & \widehat{K}_2^T & \widehat{Z}_2^T
			\end{pmatrix} = \Rank \begin{pmatrix} 
				\mathcal{H}_{0,2} & \mathcal{H}_2 \\[0.2em]
				\widehat{T}_{n,1}^T & \widehat{K}_1^T \\[0.2em]
				\widehat{T}_{n,2}^T & \widehat{K}_2^T
			\end{pmatrix}.
		\end{align}
		Since $\cT$ admits a representing measure by assumption in the theorem, there exists a psd extension 
		$$
		\begin{pmatrix} 
			\cH_{0} & \cH & K \\
			T_n & K^T & Z^{(e)}_1\\
			K^T  & (Z^{(e)}_1)^T & Z_{2}^{(e)}
		\end{pmatrix}.
		$$
		By Theorem \ref{block-psd}, used for 
		$A:=
		\begin{pmatrix}
			\cH_0 & \cH \\ T_n & K^T
		\end{pmatrix},$
		$
		B:=
		\begin{pmatrix}
			K\\ Z_1^{(e)}
		\end{pmatrix},
		$
		$C:=Z_2^{(e)}$,
		it follows that $\cC(B)\subseteq \cC(A)$. In particular,
		$\Rank \begin{pmatrix}
			\mathcal{H}_0 & \mathcal{H}
		\end{pmatrix} = \Rank \begin{pmatrix}
			\mathcal{H}_0 & \mathcal{H} & K
		\end{pmatrix}$,
		whence
		\begin{align}
			\begin{split}
				\label{rank-equality-v5-semidefinite-case}
				\Rank \begin{pmatrix} 
					\mathcal{H}_{0,1} & \mathcal{H}_{1} \\[0.2em]
					\mathcal{H}_{0,2} & \mathcal{H}_{2}
				\end{pmatrix} &=
				\Rank \begin{pmatrix} 
					\mathcal{H}_{0,1} & \mathcal{H}_{(1)} & 	\widehat{K}_1^{(1)} \\
					\mathcal{H}_{0,2} & \mathcal{H}_{(2)} & 	\widehat{K}_1^{(2)}
				\end{pmatrix}.
			\end{split}
		\end{align}
		By Lemma \ref{lemma-rank-equalities}, used for
		\begin{align*} 
			A_1&:=
			\begin{pmatrix} 
				\cH_{0,1} & \cH_1 
			\end{pmatrix},
			&B_1&:=
			\widehat K_{1}^{(1)},\\
			A_2&:=
			\begin{pmatrix}
				\cH_{0,2} & \cH_2 
			\end{pmatrix},
			&B_2
			&:=\widehat K_{1}^{(2)},\\
			C&:=
			\begin{pmatrix}
				\widehat{T}_{n,1}^T & \widehat{K}_1^T\\
				\widehat{T}_{n,2}^T & \widehat{K}_2^T
			\end{pmatrix},
			&D&:=
			\begin{pmatrix}
				\widehat{Z}_1\\
				\widehat{Z}_2^T
			\end{pmatrix},
		\end{align*}
		and the rank equalities
		\eqref{reordering-290925},
		\eqref{rank-equality-v4-semidefinite-case} and \eqref{rank-equality-v5-semidefinite-case}, we get
		\begin{align}
			\label{K1-Z1-Z2T-columns-dependancy-proved}
			\begin{split}
				\Rank \begin{pmatrix} 
					\mathcal{H}_{0,1} & \mathcal{H}_1 & \widehat{K}_1^{(1)} \\[0.2em]
					\mathcal{H}_{0,2} & \mathcal{H}_2 & \widehat{K}_1^{(2)} \\[0.2em]
					\widehat{T}_{n,1}^T & \widehat{K}_1^T & \widehat{Z}_1\\[0.2em]
					\widehat{T}_{n,2}^T & \widehat{K}_2^T & \widehat{Z}_2^T
				\end{pmatrix} 
				&= \Rank \begin{pmatrix} 
					\mathcal{H}_{0,1} & \mathcal{H}_1 \\[0.2em]
					\mathcal{H}_{0,2} & \mathcal{H}_2 \\[0.2em]
					\widehat{T}_{n,1}^T & \widehat{K}_1^T \\[0.2em]
					\widehat{T}_{n,2}^T & \widehat{K}_2^T
				\end{pmatrix} 
				\underbrace{=}_{\eqref{290925-1857}}
				\Rank M_{\cT}(n).
			\end{split}
		\end{align}
		Hence, the equality \eqref{K1-Z1-Z2T-columns-dependancy} holds, which concludes the proof of Claim 4. \hfill $\blacksquare$\\
		
		By Claim 5,
		\begin{equation}
			\label{061025-0840}
			\begin{pmatrix}
				\widehat{K}_1 \\
				\widehat{Z}_1 \\
				\widehat{Z}_2^T \\
				\widehat{Z}_3^T
			\end{pmatrix}
			=\begin{pmatrix} 
				\mathcal{H}_0 & \mathcal{H} \\
				\widehat{T}_{n,1}^T & \widehat{K}_1^T \\
				\widehat{T}_{n,2}^T & \widehat{K}_2^T \\
				\widehat{T}_{n,3}^T & \widehat{K}_3^T
			\end{pmatrix}\begin{pmatrix}
				U_1 \\
				U_2
			\end{pmatrix}
		\end{equation}
		for some real matrices $U_1 \in M_{p\times (p_0-m+\Card A)}(\RR)$ and $U_2 \in M_{np\times (p_0-m+\Card A)}(\RR)$ with 
		\begin{equation} 
			\label{rank-of-U1-semidefinite-case}
			\Rank U_1=p_0-m+\Card A \quad (\text{see }\eqref{choice-of-hat-Z1-semidefinite-case}),
		\end{equation}
		or equivalently
		\begin{equation}
			\label{equality-v2-semidefinite-case}
			\begin{pmatrix}
				\mathcal{H}_0 & \mathcal{H} & \widehat{K}_1 \\
				\widehat{T}_{n,1}^T & \widehat{K}_1^T & \widehat{Z}_1 \\
				\widehat{T}_{n,2}^T & \widehat{K}_2^T & \widehat{Z}_2^T \\
				\widehat{T}_{n,3}^T & \widehat{K}_3^T & \widehat{Z}_3^T
			\end{pmatrix}
			\begin{pmatrix}
				-U_1 \\
				-U_2 \\
				I_{p_0-m+\Card A}
			\end{pmatrix}
			=\mathbf{0}_{(n+1)p\times (p_0-m+\Card A)}.
		\end{equation}
		By 
		\eqref{eq:to-prove-280925}
		and
		\eqref{equality-v2-semidefinite-case},
		we have that
		$$
		\begin{pmatrix}
			\mathcal{H}_0 & \mathcal{H} & \widehat K \\
			\widehat{T}_n^T & \widehat K^T & \widehat Z
		\end{pmatrix} 
		\begin{pmatrix}
			-U_1 & \mathbf{0}_{p\times  (m-\Card A)} & \mathbf{0}_{p\times (p-p_0)}\\
			-U_2 & -J_1 & -\widehat{G}_1 \\
			I_{p_0-m+\Card A} & -J_2 & -\widehat{G}_2 \\
			\mathbf{0} & I_{m-\Card A} & -\widehat{G}_3 \\
			\mathbf{0} & \mathbf{0} & I_{p-p_0}
		\end{pmatrix}
		=\mathbf{0}_{(n+1)p\times  p}.
		$$
		Therefore 
		\begin{equation}
			\label{equality-v4-semidefinite-case}
			\begin{split}
				&\begin{pmatrix}
					\widehat{K} \\
					\widehat{Z}
				\end{pmatrix}
				\widehat{Q}_n
				= 
				\row(\widehat{X}_{\cT}^i)_{i\in [n]}
				\begin{pmatrix}
					U_2 & J_1 & \widehat{G}_1
				\end{pmatrix} + 
				\widehat{X}_{\cT}^0
				\begin{pmatrix}
					U_1 & \mathbf{0}_{p\times  (m-\Card A+p-p_0)} 
				\end{pmatrix},
			\end{split}
		\end{equation} where
		\begin{equation}
			\label{def:Q-n}
			\widehat{Q}_n:=\begin{pmatrix}
				I_{p_0-m+\Card A} & -J_2 & -\widehat{G}_2 \\
				\mathbf{0}
				& I_{m-\Card A} & -\widehat{G}_3 \\
				\mathbf{0}
				& \mathbf{0}
				& I_{p-p_0}
			\end{pmatrix}
		\end{equation}
		and
		\begin{align}
			\label{permutation-semidefinite-case}
			\row(\widehat{X}_{\cT}^i)_{i\in [0;n]}
			&:= (I_{np}\oplus P^T)
			\row({X}_{\cT}^i)_{i\in [0;n]}
			= \begin{pmatrix} 
				\mathcal{H}_0 & \mathcal{H} \\
				\widehat{T}_n^T & \widehat{K}^T
			\end{pmatrix}.
		\end{align}
		Note that 
		$\widehat{G}_1 = 
		\col(H_i\widecheck{H}_n^{-1})_{i\in [0;n-1]}$
		and $\col(
		\widehat{G}_2,  
		\widehat{G}_3)
		= P_1^T\widecheck{H}_{n,(0)}\widecheck{H}_n^{-1}$. Writing 
		\begin{equation}
			\label{061025-0843}
			\begin{pmatrix}
				U_2 & J_1
			\end{pmatrix} =: \begin{pmatrix}
				U_{2,0} & J_{1,0} \\
				U_{2,1} & J_{1,1} \\
				\vdots & \vdots\\
				U_{2,{n-1}} & J_{1,n-1}
			\end{pmatrix},
		\end{equation}
		where $U_{2,i} \in M_{p\times (p_0-m+\Card A)}(\RR)$ and $J_{1,i} \in M_{p\times (m-\Card A)}(\RR)$, 
		and defining
		\begin{equation} 
			\label{def:Qi}
			Q_i := \begin{pmatrix}
				U_{2,i} & J_{1,i} & H_i\widecheck{H}_n^{-1}
			\end{pmatrix} 
			\quad \text{for } i \in [0;n-1],
		\end{equation}
		\eqref{equality-v4-semidefinite-case} is equivalent to
		\begin{equation}
			\label{equality-v5-semidefinite-case}
			\begin{split}
				&\begin{pmatrix}
					\widehat{K} \\
					\widehat{Z}
				\end{pmatrix}
				\widehat{Q}_n
				= \sum_{i=1}^{n}\widehat{X}_{\cT}^iQ_{i-1} + 
				\widehat{X}_{\cT}^0
				\begin{pmatrix}
					U_1 & \mathbf{0}_{p\times (m-\Card A+p-p_0)} 
				\end{pmatrix}.
			\end{split}
		\end{equation}
		Using \eqref{permutation-semidefinite-case} and 
		$$\begin{pmatrix}
			\widehat{K} \\
			\widehat{Z}
		\end{pmatrix} = \begin{pmatrix}
			KP \\
			P^TZP
		\end{pmatrix} = (I_{np} \oplus P^T)
		\begin{pmatrix}
			\col(T_i)_{i\in [n+1;2n]} \\
			Z
		\end{pmatrix}P$$
		in \eqref{equality-v5-semidefinite-case}, we get
		\begin{align}
			\label{equality-v552-semidefinite-case}
			\begin{pmatrix}
				\col(T_i)_{i\in [n+1;2n]} \\
				Z
			\end{pmatrix}Q_n
			= \sum_{i=1}^{n}X_{\cT}^iQ_{i-1} + X_{\cT}^0\begin{pmatrix}
				U_1 & \mathbf{0}_{p\times (m-\Card A+p-p_0)} 
			\end{pmatrix},
		\end{align}
		where 
		\begin{equation} 
			\label{061025-0853}
			Q_n:=P\widehat{Q}_n.
		\end{equation}
		We now define
		$$
		T_{2n+1}:=Z\qquad \text{and}\qquad 
		T_{2n+2}:=
		\row(T_i)_{i\in [n+1;2n+1]}
		(M_\cT(n))^\dagger
		\col(T_i)_{i\in [n+1;2n+1]}.
		$$
		Let 
		\begin{align}
			\label{061025-0900}
			\begin{split}
				\cT^{(e)}
				&:=(T_i)_{i\in [0;2n+2]},\;
				S_{2n+\ell}:=L_{\cT^{(e)}}((x+t)^{2n+\ell}),\; \ell=1,2,
				\quad\text{and}\\
				{\cS}^{(e)}
				&:=(S_i)_{i\in [0;2n+2]}.
			\end{split}
		\end{align}
		{Define a matrix polynomial
			\begin{align}
				\label{gen-poly-semidefinite-case}
				H(x) := x^{n+1}Q_n - \sum_{i=1}^{n}x^iQ_{i-1} - \begin{pmatrix}
					U_1 & \mathbf{0}_{p\times (m-\Card A+p-p_0)} 
				\end{pmatrix},
			\end{align}
			By \eqref{equality-v552-semidefinite-case} and the definition of $\cT^{(e)}$,
			we have that $H(x)$ is a block column relation of $M_{\cT^{(e)}}(n+1)$.
			{By Corollary \ref{co:connection-between-both-mm}, $H(x-t)$  is a block column relation of 
				$M_{\cS^{(e)}}(n+1)$.}
			By \eqref{rank-of-U1-semidefinite-case}, we have 
			\begin{equation}
				\label{300925:rank-equality}
				\Rank 
				\begin{pmatrix}
					U_1 & \mathbf{0}_{p\times (m-\Card A+p-p_0)} 
				\end{pmatrix}= p_0-m+\Card A.
			\end{equation}
			Let 
			$$
			\widetilde W_0:=
			\Ker 
			\begin{pmatrix}
				U_1 & \mathbf{0}_{p\times (m-\Card A+p-p_0)} 
			\end{pmatrix}
			\quad\text{and}\quad
			\widetilde W_i:=
			\widetilde W_0
			\bigcap
			\bigcap_{j=0}^{i-1} \Ker Q_{j}
			\;\text{for }i\in [n].
			$$ 
			Note 
			that each $Q_i$ ends with $H_i\widecheck{H}_n^{-1}$ for $i\in [0;n-1]$ (see \eqref{def:Qi}),
			which has $p-p_0$ columns.
			Hence, the following holds:
			\begin{equation} 
				\label{kernel-inlusion}
				\text{Let } i\in [0;n-1].
				\text{ If }
				v\in \bigcap_{j=0}^{i}\Ker(H_j\widecheck{H}_n^{-1}),
				\text{ then }
				\begin{pmatrix}
					0 \\ v
				\end{pmatrix}
				\in 
				\widetilde W_{i+1}.
			\end{equation}
			Let $\widetilde s_i:=\dim \widetilde W_i$ for $i\in [0;n]$.
			Recall the definition of $s_i$ from Claim 2 and note that $\Ker(H_j\widecheck{H}_n^{-1}) = \Ker H_j$ for each $j$.. 
			By 
			\eqref{kernel-inlusion}, it follows that 
			$\widetilde s_i\geq s_{i-1}$ for $i\in [n]$.
			Therefore
			\begin{align}
				\label{kernel-dimension-sum}
				\begin{split}
					\sum_{i=0}^{n}\widetilde s_i
					&\geq \widetilde s_0+\sum_{i=0}^{n-1}s_i
					\geq m-\Card A+p-p_0\\
					&\hspace{1cm}
					+(n-z_s)(p-p_0) + \sum_{j=1}^{s-1}(z_{j+1}-z_j)(p-p_0-r_{s-j}) + \Card A\\
					&=m + (n+1)p-\big((n+1)p_0 + \sum_{j=1}^{s}z_{s-j+1}p_j\big)\\
					&=m + (n+1)p-\Rank M(n),
				\end{split}
			\end{align}
			where we used 
			\eqref{300925:rank-equality} 
			and 
			\eqref{sum-of-dim-ker-Hi-proposition-psd-case}
			in the second inequality
			and Claim 1 in the last equality.
			By Lemma \ref{le:determinant-of-matrix-valued-polynomial-v2},
			the invertibility of $Q_n$ (see \eqref{def:Q-n} and \eqref{061025-0853})
			and \eqref{kernel-dimension-sum}, it follows that
			\begin{align}
				\label{3009-det-exp}
				\det H(x-t) 
				&= (x-t)^{m+(n+1)p-\Rank M(n)}g(x),
			\end{align}
			where $0\neq g(x) \in \RR[x]$ and $\deg (\det H(x-t))=(n+1)p$ (since $Q_n$ is invertible).
			Therefore $\deg g(x)=\Rank M(n)-m$.
			{
				By Theorem \ref{th:Hamburger-matricial}, there exists a $(\Rank M(n))$--atomic 
				representing matrix measure for $L$ 
				of the form 
				$\mu=\sum_{j=1}^\ell \delta_{x_j}A_j$, 
				where $\ell\in \NN$, $x_j\in \RR$ are pairwise distinct, $A_j\in \Sym_p^{\succeq 0}(\RR)$ and $\sum_{j=1}^\ell \Rank A_j = \Rank M(n)$. 
				By Lemma \ref{lemma:atoms} and \eqref{3009-det-exp},
				it follows that $t = x_{j'}$ for some $j' \in [\ell]
				$, and $\Rank A_{j'} \geq m$. 
				Without loss of generality we can assume that $j'=1$.
				Next, we need to establish the following claim.\\
				
				\noindent\textbf{Claim 6.}
				$\Rank A_{1} = m$. \\

				\noindent\textit{Proof of Claim 6.} 
				Suppose on the contrary that $\Rank A_{1} = m'$ for some
				\begin{equation} 
					\label{contradiction-argument-semidefinite-case}
					m' > m. 
				\end{equation}
				Define $\widetilde{\mu} := \mu - \delta_tA_1$. 
				Let $\widetilde{L}:\RR[x] \to \Sym_p(\RR)$ be a linear operator, defined by
				\begin{align*}
					\widetilde{L}(x^i) \equiv \widetilde{S}_i := \int_{\RR} x^i\; d\widetilde{\mu} \quad \text{for } i \in \NN \cup \{0\}.
				\end{align*}
				Let $\widetilde{\cS} := (\widetilde{S}_i)_{i\in [0;2n+2]}.$
				We have
				\begin{align}
					\label{rank-inequalities-semidefinite-case-v2}
					\begin{split}
						\Rank M(n)-m'
						&\leq \Rank M_{\widetilde{\cS}}(n)\leq \Rank M_{\widetilde{\cS}}(n+1),\\
						\Rank M_{\widetilde{\cS}}(n+1) &\leq \sum_{j=2}^\ell \Rank A_j = \Rank M(n)-m',
					\end{split}
				\end{align}
				where the first inequality follows
				from the fact that the difference
				$M(n)-M_{\widetilde{\cS}}(n)$ is 
				a moment matrix of the linear operator corresponding to the matrix measure $\delta_tA_1$.
				The inequalities
				\eqref{rank-inequalities-semidefinite-case-v2} imply 
				that 
				$
				\Rank M(n)-m'
				\leq M_{\widetilde{\cS}}(n+1)
				\leq 
				\Rank M(n)-m',
				$
				whence all inequalities in \eqref{rank-inequalities-semidefinite-case-v2} must be equalities.
				In particular,
				\begin{align}
					\label{rank-equality-of-widetildeMn-semidefinite-case-v2}
					\Rank M_{\widetilde{\cS}}(n) = \Rank M_{\widetilde{\cS}}(n+1) = \Rank M(n)-m'.
				\end{align}
				For $i\in [0;2n+2]$ we define
				$\widetilde{T}_i := 
				\int_{\RR}(x-t)^id\widetilde{\mu}.$
				Note that 
				\begin{equation}
					\label{identity-widetildeTi-and-Ti-v2}
					\widetilde{T}_i
					=\int_{\RR}(x-t)^id(\widetilde{\mu}+\delta_tA_1)
					=T_i
					\quad
					\text{for }i\in [2n+2].
				\end{equation}
				Define the sequence 
				$\widetilde{\mathcal{T}} := (\widetilde{T}_i)_{i\in [0;2n+2]}$ and
				write
				$$X_{\widetilde{\cT}}^i
				\equiv
				\row\big((x_{\widetilde{\cT}}^{(i)})_j\big)_{j\in [p]}
				:=
				\col(\widetilde{T}_{i+j})_{j\in [0;n+1]}
				$$
				for $i\in [0;n+1]$.
				By \eqref{rank-equality-of-widetildeMn-semidefinite-case-v2} 
				and Proposition \ref{linear transform invariance-nc}.\eqref{linear transform invariance-nc-4}
				we have 
				\begin{align}
					\label{rank-of-matrix-widetildeYi-v2} 
					\begin{split}
						m'
						&=\Rank M_{\cS}(n+1)-
						\Rank\big(\row
						(X^i_{\widetilde{\cT}})_{i\in [0;n+1]}\big)\\
						&=\Rank M_{\cS}(n)-
						\Rank\big(\row
						(X^i_{\widetilde{\cT}})_{i\in [0;n+1]}\big).
					\end{split}
				\end{align}
				By \eqref{identity-widetildeTi-and-Ti-v2} and Claim 1,
				it follows that
				\begin{equation}
					\label{031025-2013}
					\Rank \row(X^{i}_{\widetilde\cT})_{i\in [n]}
					=
					\Rank \row(X^{i}_{\cT})_{i\in [n]}
					=\Rank M_{\cT}(n)-p_0-\Card A.
				\end{equation}
				Further,
				\begin{align}
					\label{031025-2018}
					\begin{split}
						\Rank \row(X^{i}_{\widetilde\cT})_{i\in [n+1]}
						&\underbrace{=}_{\eqref{identity-widetildeTi-and-Ti-v2}} 
						\Rank \row(X^{i}_{\cT})_{i\in [n+1]}\\
						&\underbrace{=}_{\text{Claim }3}
						\Rank \row(X^{i}_{\cT})_{i\in [n]}
						+p_0-m+\Card A\\
						&\underbrace{=}_{\eqref{031025-2013}}
						\Rank M_{\cT}(n)-m.
					\end{split}
				\end{align}
				Finally,
				\begin{equation*}
					\Rank M_{\cT}(n)-m
					\underbrace{=}_{\eqref{031025-2018}}
					\Rank \row(X^{i}_{\widetilde\cT})_{i\in [n+1]}
					\underbrace{\leq}_{\eqref{rank-of-matrix-widetildeYi-v2}}
					\Rank M_{\cT}(n)-m',
				\end{equation*}
				which implies that $m'\leq m$ and contradicts 
				\eqref{contradiction-argument-semidefinite-case}.
				This proves Claim 6.\hfill$\blacksquare$\\
				
				{It remains to establish the moreover part.
					Note that the representing measure is unique if and only if 
					$\widehat Z$, satisfying         
					\eqref{choice-of-hat-Z-semidefinite-case}
					and
					\eqref{eq:to-prove-280925},
					is unique.
					Indeed, in this case the polynomial $H(x)$ as in \eqref{gen-poly-semidefinite-case} is uniquely determined
					and it completely determines the minimal measure for the extended sequence.

					First note that in the decomposition \eqref{def:Z-semidefinite-case} 
					there is no freedom in chosing $\widehat{Z_1}\in \Sym_{p_0-m+\Card A}(\RR)$ 
					such that \eqref{choice-of-hat-Z1-semidefinite-case} holds
					if and only if $p_0-m+\Card A=0$, or equivalently $m=\Card A+p_0$.
					Since $B\subseteq ([p]\setminus \Dep(X_{\cT}^{z_s}))$,
					it follows by definition of $p_0$ (see \eqref{def-of-p0}) that this is further equivalent to 
					$B=[p]\setminus \Dep(X_{\cT}^{z_s})$ and $m=\Card(A\cup B)$,
					which are the first and the second condition in \eqref{cond:uniqueness}. 
					
					By the previous paragraph, the matrices 
					$\widehat{Z}_1$, $\widehat{Z}_2$ and $\widehat{Z}_3$ in \eqref{def:Z-semidefinite-case} are empty. Next, observe the matrices 
					$\widehat{Z}_4$ and $\widehat{Z}_5$ in \eqref{def:Z-semidefinite-case}. 
					By 
					\eqref{equality-v3-semidefinite-case} 
					and
					\eqref{widehatZ5Tequality},
					$\widehat{Z}_4$ and $\widehat{Z}_5$ will be unique if and only if
					$\widehat{K}_2^Tv=\widehat{K}_3^Tv=\mathbf{0}$ for any $v\in \Ker \cH$.
					This is due to the fact that $J$ in \eqref{definition-hat-K-2-psd}
					is unique up to the addition of a matrix with each column from 
					$\Ker \cH$ (note that $\widehat{K}_1$ is an empty matrix).
					Note that 
					$\Ker \cH\subseteq \Ker \widehat K_2^T$
					and
					$\Ker \cH\subseteq \Ker \widehat K_3^T$
					are exactly the third and the fourth condition in \eqref{cond:uniqueness}. 
					
					Finally, we observe the matrix $\widehat{Z}_6$ in \eqref{def:Z-semidefinite-case}. 
					It remains to prove that the four conditions in \eqref{cond:uniqueness} ensure
					$\widehat{Z}_6$ is also unique.
					By 
					\eqref{equality-v3-semidefinite-case},
					it suffices to prove that
					$\begin{pmatrix}
						\widehat{K}_3^T & \widehat{Z}_5^T
					\end{pmatrix}\widehat{G}$
					does not depend on the choice of $\widehat{G}$ satisfying 
					\eqref{rank-equality-v11-semidefinite-case}.
					If $\widehat{G}$ and $\widehat{G}'$
					both satisfy \eqref{rank-equality-v11-semidefinite-case},
					then 
					$$
					\mathbf{0}
					=
					\begin{pmatrix}
						\cH& \widehat{K}_2
					\end{pmatrix}
					(\widehat{G}-\widehat{G}')
					\underbrace{=}_{\eqref{definition-hat-K-2-psd}}
					\cH
					\begin{pmatrix}
						I& J
					\end{pmatrix}
					(\widehat{G}-\widehat{G}'),
					$$
					whence 
					\begin{equation} 
						\label{kernel-inclusion}
						\text{every column of the matrix }
						\begin{pmatrix}
							I& J
						\end{pmatrix}
						(\widehat{G}-\widehat{G}')
						\text{ is in }
						\Ker \cH.
					\end{equation}
					Note that
					$$
					\begin{pmatrix}
						\widehat{K}_3^T & \widehat{Z}_5^T
					\end{pmatrix}
					(\widehat{G}-\widehat{G}')
					\underbrace{=}_{\eqref{widehatZ5Tequality}}
					\widehat{K}_3^T
					\begin{pmatrix}
						I& J
					\end{pmatrix}
					(\widehat{G}-\widehat{G}')
					\underbrace{=}_{
						\substack{
							\eqref{kernel-inclusion},\\
							\Ker\cH\subseteq \Ker\widehat{K}_3^T
						}
					}\mathbf{0},
					$$
					whence 
					$\begin{pmatrix}
						\widehat{K}_3^T & \widehat{Z}_5^T
					\end{pmatrix}\widehat{G}$
					does not depend on the choice of $\widehat{G}$ satisfying 
					\eqref{rank-equality-v11-semidefinite-case}.
					This proves the moreover part of Theorem \ref{th:mainTheorem}.
				}

	\section{Proof of the implication $\eqref{th:mainTheorem-pt1} \Rightarrow \eqref{th:mainTheorem-pt2}$ of Theorem \ref{th:mainTheorem}}
	\label{section:proof-main-reverse}
	
	Let $\mu=\delta_t A_1+\sum_{j=2}^\ell \delta_{x_j}A_j$ be a finitely atomic $p\times p$ matrix representing measure for $L$
	with 
	$\ell\in \NN$, $x_j\in \RR\setminus \{t\}$,
	$A_j\in \Sym_p^{\succeq 0}(\RR)$, 
	$\sum_{j=1}^\ell \Rank A_j = \Rank M(n)$
	and $\mult_t \mu=m$.
	For $k=1,2$ define
	$S_{2n+k} := \int_{\RR} x^{2n+k}\; d\mu$
	and 
	$T_{2n+k} := \int_{\RR} (x-t)^{2n+k}\; d\mu$.
	Let $\widetilde{\mu}:=\sum_{j=2}^\ell \delta_{x_j}A_j$
	and 
	$\widetilde{L}:\RR[x] \to \Sym_p(\RR)$ be a linear operator, defined by
	\begin{align*}
		\widetilde{L}(x^i) \equiv \widetilde{S}_i := \int_{\RR} x^i\; d\widetilde{\mu} \quad \text{for } i \in \NN \cup \{0\}.
	\end{align*}
	Write $\widetilde{\cS} := (\widetilde{S}_i)_{i\in [0;2n+2]}.$
	Analogously as in the proof of Claim 6
	in the proof of the implication 
	$\eqref{th:mainTheorem-pt2} \Rightarrow \eqref{th:mainTheorem-pt1}$ of Theorem \ref{th:mainTheorem},
	it follows that
	\begin{align}
		\label{rank-equality-of-widetildeMn-semidefinite-case}
		\Rank M_{\widetilde{\cS}}(n) = \Rank M_{\widetilde{\cS}}(n+1) = \Rank M(n)-m.
	\end{align}
	For $i\in [0;2n+2]$ we define
	$\widetilde{T}_i := 
	\int_{\RR}(x-t)^id\widetilde{\mu}.$
	Note that 
	\begin{equation*}
		\widetilde{T}_i
		=\int_{\RR}(x-t)^id(\widetilde{\mu}+\delta_tA_1)
		=T_i
		\quad
		\text{for }i\in [2n+2].
	\end{equation*}
	Define the sequence 
	$\widetilde{\mathcal{T}} := (\widetilde{T}_i)_{i\in [0;2n+2]}$ and
	for $i\in [0;n+1]$
	write
	\begin{align*} 
		X_{{\cT}}^i
		\equiv
		\row\big((x_{{\cT}}^{(i)})_j\big)_{j\in [p]}
		:=
		\col({T}_{i+j})_{j\in [0;n+1]},\\
		X_{\widetilde{\cT}}^i
		\equiv
		\row\big((x_{\widetilde{\cT}}^{(i)})_j\big)_{j\in [p]}
		:=
		\col(\widetilde{T}_{i+j})_{j\in [0;n+1]}.
	\end{align*}
	By \eqref{rank-equality-of-widetildeMn-semidefinite-case} 
	and Proposition \ref{linear transform invariance-nc}.\eqref{linear transform invariance-nc-4}
	we have 
	\begin{align*}
		\begin{split}
			m
			&=\Rank M(n)-
			\Rank\big(\row
			(X^i_{\widetilde{\cT}})_{i\in [0;n+1]}\big)\\
			&=\Rank M(n)-
			\Rank\big(\row
			(X^i_{\widetilde{\cT}})_{i\in [0;n]}\big).
		\end{split}
	\end{align*}
	Let $\Dep(X_{\ast}^i)$, $\Dep_1(X_{\ast}^i)$ and $\Depn(X_{\ast}^i)$, $\Depn_{\ell}(X_{\ast}^i)$, $\ell=0,1$,
	be
	as in \eqref{eq:defDep-of-Y-i},
	\eqref{eq:defDep-of-Y-i-alternative} and \eqref{eq:defDepnew-of-Y-i}, 
	respectively.
	Define
	\begin{equation*}
		\mathcal{Z} \equiv \{
		z_1,\ldots,z_s\colon 0\leq z_1<z_2<\ldots<z_s\leq n+1
		\}
		:= \big\{i \in [0;n+1] 
		\colon
		\Depn(X_{\widetilde\cT}^i) \neq \emptyset \big\}
		. 
	\end{equation*}
	Note that $\Dep(X_{\widetilde{\cT}}^{z_s})=[p]$.
	Let
	\begin{align*}
		\begin{split}
			p_{j,\ell} &:=\Card\Depn_\ell(X_\cT^{z_{s
					-j+1}})\;\;\text{for }j\in [s] \text{ and }\ell=0,1,\\
			p_j &:=\Card\Depn(X_\cT^{z_{s-j+1}})=p_{j,0}+p_{j,1}
			\;\;\text{for }j\in [s],
		\end{split}
	\end{align*}
	and 
	$$\widetilde A:=\bigcup_{i=1}^{n+1}\Depn_1(X_{\widetilde{\cT}}^i).$$
	
	\noindent \textbf{Claim 1.}
	We have 
	\begin{align}
		\label{051025-0759}
		\Rank M_{\widetilde{\cT}}(n+1)
		&=\Rank M_{\widetilde{\cT}}(n)
		=\sum_{j=1}^{s}z_{s-j+1}p_j.
	\end{align}
	
	\noindent \textit{Proof of Claim 1.}
	\eqref{051025-0759} follows by definition of $p_j$ and $z_j$, together with
	the fact that
	$M_{\widetilde{\cT}}(n)$ is block--recursively generated, which in particular implies that 
	if $i\in \Dep(X_{\widetilde{\cT}}^{j})$ for some $j\in [0;n]$, then $i\in \Dep(X_{\widetilde{\cT}}^{j+1}).$
	\hfill$\blacksquare$\\

	\noindent \textbf{Claim 2.}
	$\widetilde A=\emptyset$. \\
	
	\noindent\textit{Proof of Claim 2.}
	Analogously as in the proof of Claim 2 of the implication $\eqref{th:mainTheorem-pt2} \Rightarrow \eqref{th:mainTheorem-pt1}$ of Theorem \ref{th:mainTheorem} with $\widetilde{\cT}$ in place of $\cT$ and noting that $p_0=0$, there exists a matrix polynomial 
	$${H}(x) = x^{n+1}H_{n+1} - \sum_{i=0}^{n}x^i{H}_i,$$ 
	such that:
	\begin{enumerate}
		\item $H(x)$ is a block column relation of 
		$M_{\widetilde{\cT}}(n+1)$.
		\item 
		$H_{n+1}$ is invertible.
		\item Defining
		$W_i:=\bigcap_{j=0}^i\Ker H_j$ and $s_i:=\dim W_i$
		for $i\in [0;n]$,
		we have
		\begin{align}
			\label{def:kernel-dimensions}
			\begin{split}
				\sum_{i=0}^{n} s_i 
				&\geq (n+1-z_{s})p + \sum_{j=1}^{s-1}(z_{j+1}-z_j)\big(p-\sum_{k=1}^{s-j} {p}_k\big) + \Card\widetilde{A}\\
				&= (n+1)p- \sum_{j=1}^{s}z_{s-j+1}p_j + \Card\widetilde{A}\\
				&\underbrace{=}_{\text{Claim }1}
				(n+1)p-\Rank M_{\widetilde{\cT}}(n)+
				\Card\widetilde{A}.
			\end{split}
		\end{align}
	\end{enumerate}
	By Lemma \ref{le:determinant-of-matrix-valued-polynomial-v2},
	the invertibility of $H_{n+1}$
	and \eqref{def:kernel-dimensions}, it follows that
	\begin{align*}
		\det H(x) 
		&= 
		x^{(n+1)p-\Rank M_{\widetilde{\cT}}(n)+
			\Card\widetilde{A}}
		g(x),
	\end{align*}
	where $0\neq g(x) \in \RR[x]$ and $\deg (\det H(x))=(n+1)p$ (since $H_{n+1}$ is invertible).
	Therefore $\deg g(x)=\Rank M_{\widetilde{\cT}}(n)-\Card \widetilde{A}$.
	Since 
	$\sum_{j=2}^{\ell}\delta_{x_j-t}A_j$
	is a representing measure for $\widetilde\cT$,
	$\sum_{j=2}^{\ell}\Rank A_j = \Rank M_{\widetilde{\cT}}(n)$
	and $x_j-t\neq 0$ for each $j \in [2;\ell]$,
	it follows by Lemma \ref{lemma:atoms}
	that $x_j-t$ are precisely the zeroes of $g(x)$ 
	and $\Rank A_j$ are their multiplicities.
	Therefore 
	$\deg g(x)=\Rank M_{\widetilde{\cT}}(n),$
	whence
	$\Card\widetilde{A} =0$.  
	\hfill $\blacksquare$\\
	
	
	Let $B$ be as in Theorem \ref{th:mainTheorem}. Note that
	\begin{equation}
		\label{051025-0908}
		\Depn_1(X_{\cT}^{n+1})\subseteq B.
	\end{equation}

	\noindent \textbf{Claim 3.}
	For $i\in [0;n]$ we have
	\begin{equation} 
		\label{equality:250925-1444}
		\Depn_1(X_{{\cT}}^{i+1})
		\sqcup
		{\Dep}(X_{{\cT}}^{i})
		=
		{\Dep}(X_{\widetilde{\cT}}^{i}).
	\end{equation}
	
	\noindent\textit{Proof of Claim 3.}
	First we prove the inclusion $(\subseteq)$ in \eqref{equality:250925-1444}.
	Fix $i\in [0;n]$.
	Since $M_{\cT}(n+1)\succeq M_{\widetilde\cT}(n+1)$,
	Lemma \ref{110722-1428} (used for $A=M_{\cT}(n+1)$,
	$B=M_{\widetilde\cT}(n+1)$) implies that
	\begin{equation} 
		\label{250925-0803}
		\Depn_1(X_{{\cT}}^{i+1})\subseteq
		\Dep(X_{\widetilde{\cT}}^{i+1})
		\qquad
		\text{and}
		\qquad
		{\Dep}(X_{{\cT}}^{i})\subseteq {\Dep}(X_{\widetilde{\cT}}^{i}).
	\end{equation}
	Using Claim 2, 
	$$\Depn_1(X_{{\cT}}^{i+1})\subseteq
	\Dep_1(X_{\widetilde{\cT}}^{i+1})\setminus 
	\Depn_1(X_{\widetilde{\cT}}^{i+1}),
	$$
	whence 
	\begin{equation}
		\label{051025-0912}
		\Depn_1(X_{{\cT}}^{i+1})\subseteq
		{\Dep}(X_{\widetilde{\cT}}^{i}).
	\end{equation}
	By \eqref{051025-0912} and \eqref{250925-0803},
	the inclusion $(\subseteq)$ in \eqref{equality:250925-1444} follows.
	
	It remains to prove the inclusion
	$(\supseteq)$ in \eqref{equality:250925-1444}.
	Fix $i\in [0;n]$.
	Let $j\in {\Dep}(X_{\widetilde{\cT}}^{i})$. We separate two cases.\\
	
	\noindent\textbf{Case 1:}
	$j\in \Depn_1(X_{{\cT}}^{i})$.
	Since 
	$\Depn_1(X_{{\cT}}^{i})\subseteq \Dep(X_{{\cT}}^{i})$,
	it follows that 
	$j\in \Dep(X_{{\cT}}^{i})$.\\
	
	\noindent\textbf{Case 2:}
	$j\notin \Depn_1(X_{{\cT}}^{i})$.
	Assume also that $j\notin {\Dep}(X_{{\cT}}^{i})$.
	Since $M_{\widetilde{\cT}}(n+1)$ is block--recursively generated, it follows that 
	$j\in \Dep_1(X_{\widetilde{\cT}}^{i+1})$
	and hence the equalities
	$X^i_{{\cT}}=X^i_{\widetilde{\cT}}$ 
	for $i\in [n+1]$ imply  
	$j\in \Depn_1(X_{{\cT}}^{i+1})$.\\
	
	This proves the inclusion
	$(\supseteq)$ in \eqref{equality:250925-1444}
	\hfill$\blacksquare$.\\
	
	
	We have
	\begin{align*}
		m&=\Rank M_{\cT}(n)-\Rank M_{\widetilde\cT}(n)\\
		&=\Big((n+1)p-\sum_{i=0}^{n}\Card\Big(\Dep(X^i_{{\cT}})\Big)\Big)-
		\Big((n+1)p-\sum_{i=0}^{n}\Card\Big(\Dep(X^i_{\widetilde{\cT}})\Big)\Big)\\
		&\underbrace{=}_{\text{Claim }3}\sum_{i=0}^{n}\Card\Big(\Depn_1(X^{i+1}_{{\cT}})\Big)\\
		&=\Card A+\Card\Big(\Depn_1(X^{n+1}_{{\cT}})\Big)
		\underbrace{\leq}_{\eqref{051025-0908}}
		\Card(A\cup B).
	\end{align*}
	Hence,
	$\Card(A)\leq m\leq \Card(A\cup B)$,
	which proves
	the implication $\eqref{th:mainTheorem-pt1} \Rightarrow \eqref{th:mainTheorem-pt2}$ of Theorem \ref{th:mainTheorem}.

	\section{Proof of Theorem \ref{co:corollary-atom-avoidance}}
	\label{section:proof-theorem-multiplicity-0}
	
	The equivalence \eqref{co:corollary-atom-avoidance-pt1} $\Leftrightarrow$ \eqref{co:corollary-atom-avoidance-pt2} follows directly from Theorem \ref{th:mainTheorem} for $m=0$.
	
	Let us prove the implication \eqref{co:corollary-atom-avoidance-pt1} $\Rightarrow$ \eqref{co:corollary-atom-avoidance-pt3}.
	Let $\mu$ be a minimal representing measure for $L$ such that $\mult_\mu t=0$
	and let $L((x-t)^{i}):=\int_\RR (x-t)^{i} d\mu$, 
	$i\in \{-2,-1,2n+1,2n+2\}$,
	be the extension of $L$ to 
	$\Span\{(x-t)^{-2},(x-t)^{-1},1,x-t,\ldots,(x-t)^{2n+1},(x-t)^{2n+2}\}.$
	Write $T_i:=L((x-t)^i)$ for $i\in [-2;2n+2]$.
	Since $(T_i)_{i=-2}^{2n+2}$ has a representing measure,
	it follows that the matrix
	$\widetilde{\cM}_1:=(T_{i+j-2})_{i,j=0}^{n+2}$
	is positive semidefinite.
	
	Note that
	$$
	\widetilde \cM_1
	=
	\begin{pmatrix}
		T_{-2} & \ast & \ast\\
		\ast & \cM_2 & \ast\\
		\underbrace{\ast}_{p\text{ columns}} & \underbrace{\ast}_{np\text{ columns}} & \underbrace{\ast}_{2p\text{ columns}}
	\end{pmatrix}
	=
	\begin{pmatrix}
		\ast & \ast & \ast\\
		\underbrace{\ast}_{p\text{ columns}} & \underbrace{\cM_3}_{np\text{ columns}} & \underbrace{\ast}_{2p\text{ columns}}
	\end{pmatrix}.
	$$
	Let $w\in \Ker \cM_2$. By Lemma \ref{extension-principle}
	(used for $A=\widetilde{\cM}_1\in \Sym_{(n+3)p}(\RR)$ and
	$A|_{Q}=\cM_2$ where $Q=[p+1;(n+1)p]$),
	it follows that 
	$\col( 
	\mathbf{0}_{p\times 1} , w , \mathbf{0}_{2p\times 1})
	\in \Ker \widetilde{\cM}_1,$
	whence $w\in \Ker \cM_3$.
	Therefore, $\Ker \cM_2\subseteq \Ker \cM_3$.
	
	Similarly 
	$$
	\widetilde \cM_1
	=
	\begin{pmatrix}
		\ast & \ast & \ast\\
		\ast & \cM_3 & \ast\\
		\underbrace{\ast}_{2p\text{ columns}} & \underbrace{\ast}_{np\text{ columns}} & \underbrace{T_{2n+2}}_{p\text{ columns}}
	\end{pmatrix}
	=
	\begin{pmatrix}
		\ast & \cM_2 & \ast\\
		\underbrace{\ast}_{2p\text{ columns}} & \underbrace{\ast}_{np\text{ columns}} & \underbrace{\ast}_{p\text{ columns}}
	\end{pmatrix}.
	$$
	Let $w\in \Ker \cM_3$. By Lemma \ref{extension-principle}
	(used for $A=\widetilde{\cM}_1\in \Sym_{(n+3)p}(\RR)$ and
	$A|_{Q}=\cM_3$ where $Q=[2p+1;(n+2)p]$),
	it follows that 
	$\col( 
	\mathbf{0}_{2p\times 1} , w , \mathbf{0}_{p\times 1})
	\in \Ker \widetilde{\cM}_1,$
	whence $w\in \Ker \cM_2$.
	Therefore, $\Ker \cM_3\subseteq \Ker \cM_2$.
	
	It remains to prove the implication \eqref{co:corollary-atom-avoidance-pt3} $\Rightarrow$ \eqref{co:corollary-atom-avoidance-pt2}.
	We have to prove that $A=\emptyset$. Assume on the contrary that 
	there exists $i\in A$. By definition of $A$, there is $k\in [n]$ such 
	that $\mathbf{y}_{kp+i}=\sum_{\ell=p+1}^{kp+i-1}\gamma_\ell \mathbf{y}_\ell$ for some $\gamma_\ell\in \RR$
	and in particular,
	$\mathbf{y}_{(k-1)p+i}\neq \sum_{\ell=1}^{(k-1)p+i-1}\gamma_{\ell +p}\mathbf{y}_\ell$. But this implies
	$$w=(-\gamma_{p+1},\ldots,-\gamma_{kp+i-1},1,0,\ldots,0)\in \RR^{np}\in \Ker \cM_3\setminus \Ker \cM_2,$$
	which is contradiction to $\Ker\cM_2=\Ker\cM_3$. Hence, $A=\emptyset$.

	\section{Proof of Corollary \ref{co:Simonov-psd}}
	\label{section:corollary}
	The equivalence 
	$
	\eqref{co:Simonov-psd-pt0}
	\Leftrightarrow
	\eqref{co:Simonov-psd-pt1}
	$
	is clear. 
	The implication 
	$\eqref{co:Simonov-psd-pt00}
	\Leftarrow
	\eqref{co:Simonov-psd-pt0}
	$
	is trivial, 
	while the implication
	$\eqref{co:Simonov-psd-pt00}
	\Rightarrow
	\eqref{co:Simonov-psd-pt0}
	$
	follows by \cite[Theorem 5.1]{MS23+}.
	The equivalence 
	$\eqref{co:Simonov-psd-pt3}\Leftrightarrow
	\eqref{co:Simonov-psd-pt2}$
	follows by noticing that for $\ell=0,2$ we have
	$$
	\sum_{i,j=-n_1}^{n_2-1}v_{j+n_1}^TS_{i+j+\ell}v_{i+n_1}=
	\row(v_{i+n_1})_{i\in [-n_1;n_2-1]}(S_{i+j+\ell})_{i,j=-n_1}^{n_2-1}\col(v_{i+n_1})_{i\in [-n_1;n_2-1]}.
	$$
	Since $(S_{i+j})_{i,j=-n_1}^{n_2-1}$ is positive semidefinite,
	for $\ell=0,2$ we have
	$$
	\sum_{i,j=-n_1}^{n_2-1}v_{j+n_1}^TS_{i+j+\ell}v_{i+n_1}=0
	\quad\Leftrightarrow\quad
	\col(v_{i+n_1})_{i\in [-n_1;n_2-1]}\in \Ker (S_{i+j+\ell})_{i,j=-n_1}^{n_2-1}.
	$$
	
	It remains to prove the equivalence 
	$\eqref{co:Simonov-psd-pt1}\Leftrightarrow\eqref{co:Simonov-psd-pt3}$.
	Define a sequence $\widetilde{\cS} = (\widetilde{S}_0, \widetilde{S}_1, \ldots, \widetilde{S}_{2n_1+2n_2})$, where $\widetilde{S}_i := S_{i-2n_1}$. 
	
	First we prove the implication \eqref{co:Simonov-psd-pt1} $\Rightarrow$ \eqref{co:Simonov-psd-pt3}. Let $\mu := \sum_{j=1}^{\ell}\delta_{x_j}A_j$ be a minimal representing measure such that $S_i=\int_\RR x^i d\mu = \sum_{j=1}^{\ell}x_j^iA_j$ for each $i$. Clearly, $x_j \neq 0$ for each $j$
	and $(S_{i+j})_{i,j=-n_1}^{n_2}$ is positive semidefinite. Defining $\widetilde{\mu} := \sum_{j=1}^{\ell}\delta_{x_j}(x_j^{-2n_1}A_j)$, we have $$\widetilde{S}_i = S_{i-2n_1} = \sum_{j=1}^{\ell}x_j^{i-2n_1}A_j = \sum_{j=1}^{\ell}x_j^i(x_j^{-2n_1}A_j) = \int_\RR x^i d\widetilde{\mu}.$$
	By Theorem \ref{co:corollary-atom-avoidance}, used for $t = 0$,
	$\Ker (S_{i+j})_{i,j=-n_1}^{n_2-1}=\Ker (S_{i+j})_{i,j=-n_1+1}^{n_2}$.
	This proves  \eqref{co:Simonov-psd-pt1} $\Rightarrow$ \eqref{co:Simonov-psd-pt3}.
	
	It remains to prove the implication \eqref{co:Simonov-psd-pt3} $\Rightarrow$ \eqref{co:Simonov-psd-pt1}. By assumption,
	$M_{\widetilde{\cS}}$ is positive semidefinite.
	The assumption $\Ker (S_{i+j})_{i,j=-n_1}^{n_2-1}=\Ker (S_{i+j})_{i,j=-n_1+1}^{n_2}$ implies that
	$\cC(S_{i+j})_{i,j=-n_1}^{n_2-1}=\cC (S_{i+j})_{i,j=-n_1+1}^{n_2}$.
	In particular, 
	$\cC(\col(\widetilde{S}_{n_1+n_2+i})_{i\in [n_1+n_2]})\subseteq \cC(M_{\widetilde{\cS}}(n_1+n_2-1))$.
	By Theorem \ref{th:Hamburger-matricial}, $\widetilde\cS$ admits a representing measure. 
	By Theorem \ref{co:corollary-atom-avoidance}, $\widetilde{\cS}$ has a minimal representing measure $\widetilde{\mu} = \sum_{j=1}^{\ell}\delta_{x_j}A_j$ for some $x_j \in \RR \setminus \{0\}$ and $A_j \in \Sym_p^{\succeq 0}(\RR)$. Namely, $\widetilde{S}_i = \sum_{j=1}^{\ell}x_j^iA_j$ for each $i\in [0;2n_1+2n_2]$. But then
	$$S_i = \widetilde{S}_{i+2n_1} = \sum_{j=1}^{\ell}x_j^{i+2n_1}A_j = \sum_{j=1}^{\ell}x_j^ix_j^{2n_1}A_j,$$ whence $\mu := \sum_{j=1}^{\ell}\delta_{x_j}(x_j^{2n_1}A_j)$ is a minimal representing measure in \eqref{co:Simonov-psd-pt1}.

	\section{Example for Theorem \ref{th:mainTheorem}}
	\label{sec:example}
	
	The next example{\footnote{The \textit{Mathematica} file with numerical computations can be found on the link \url{https://github.com/ZobovicIgor/Matricial-Gaussian-Quadrature-Rules/tree/main}.}} shows how to apply Theorem \ref{th:mainTheorem} to construct a minimal representing measure $\mu$ for a linear operator $L:\RR[x]_{\leq 2n}\to \Sym_p(\RR)$ such that $\mult_{\mu}t >0$, where $t \in \RR$ is a prescribed number.
	
	Let $p=4$, $n=2$ and $\cS := (S_i)_{i\in [0;4]}$, where
	\begin{align*}
		S_0 &= 
		\begin{pmatrix}
			4 & 0 & 0 & 0 \\
			0 & 7 & 3 & 3 \\
			0 & 3 & 4 & 3 \\
			0 & 3 & 3 & 3
		\end{pmatrix}, \quad 
		&S_1 &= S_3 =
		\begin{pmatrix}
			0 & 0 & 0 & 0 \\
			0 & -1 & -1 & -1 \\
			0 & -1 & 0 & -1 \\
			0 & -1 & -1 & -1
		\end{pmatrix}, \quad
		&S_2 &= S_4 =
		\begin{pmatrix}
			2 & 0 & 0 & 0 \\
			0 & 3 & 1 & 1 \\
			0 & 1 & 2 & 1 \\
			0 & 1 & 1 & 1
		\end{pmatrix}.
	\end{align*}
	Let $L:\RR[x]_{\leq 4}\to \Sym_4(\RR)$ be a linear operator, defined by $L(x^i) = S_i$ for each $i\in [0;4]$.
	Since $M(2)$ is positive semidefinite and 
	$\cC(\col(S_3,S_4)) \subseteq \cC(M(1))$, by Theorem \ref{th:Hamburger-matricial}, $L$ has a representing measure. 
	It is easy to check that $\widetilde{\mu} := \sum_{j=1}^{3}\delta_{\widetilde{x}_j}\widetilde{A}_j$ is a minimal representing measure for $L$, where 
	\begin{align*}
		&\widetilde{x}_1 = -1, \quad \widetilde{x}_2 = 0, \quad \widetilde{x}_3 = 1, \quad
		\widetilde{A}_1 = 
		\begin{pmatrix}
			1
		\end{pmatrix} \oplus
		\begin{pmatrix}
			2 & 1 & 1 \\
			1 & 1 & 1 \\
			1 & 1 & 1
		\end{pmatrix}, \quad \widetilde{A}_2 = 2\widetilde{A}_1, \quad \widetilde{A}_3 = I_3 \oplus \begin{pmatrix} 0 \end{pmatrix}.
	\end{align*}
	Let $t = 1$ and $T_i$ be defined as in Theorem \ref{th:mainTheorem} for $i\in [0;4]$.
	A computation reveals that
	\begin{align*}
		T_0 &= 
		\begin{pmatrix}
			4
		\end{pmatrix} \oplus
		\begin{pmatrix}
			7 & 3 & 3 \\
			3 & 4 & 3 \\
			3 & 3 & 3
		\end{pmatrix}, \quad 
		&T_1 &= -4W,
			&T_2 &= 6W,
				&T_3 &= -10W,
					&T_4 &= 18W,
					\end{align*}
					where 
					$$W = \begin{pmatrix}
						1
					\end{pmatrix} \oplus
					\begin{pmatrix}
						2 & 1 & 1 \\
						1 & 1 & 1 \\
						1 & 1 & 1
					\end{pmatrix}.$$
					Define 
					$\cT:=(T_i)_{i\in [0;4]}$.
					Let $\Dep(X_{\cT}^i)$, $\Dep_1(X_{\cT}^i)$, $\Depn(X_{\cT}^i)$
					and $\Depn_{\ell}(X_{\cT}^i)$, $\ell=0,1$,
					and $A$, $B$
					be as in \eqref{eq:defDep-of-Y-i},  
					\eqref{eq:defDep-of-Y-i-alternative}, \eqref{eq:defDepnew-of-Y-i} and Theorem \ref{th:mainTheorem}, 
					respectively.
					We have
					\begin{align*}
						\Dep(X_{\cT}^0) &= \emptyset, &\Dep(X_{\cT}^1) &= \{4\}, &\Dep(X_{\cT}^2) &= \{3,4\}, \\
						\Dep_1(X_{\cT}^0) &= \emptyset, &\Dep_1(X_{\cT}^1) &= \{4\}, &\Dep_1(X_{\cT}^2) &= \{4\}, \\
						\Depn(X_{\cT}^0) &= \emptyset, &\Depn(X_{\cT}^1) &= \{4\}, &\Depn(X_{\cT}^2) &= \{3\}, \\
						\Depn_1(X_{\cT}^0) &= \emptyset, &
						\Depn_1(X_{\cT}^1)&= \{4\}, &
						\Depn_1(X_{\cT}^2)&= \emptyset, \\
						\Depn_0(X_{\cT}^0) &= \emptyset, &
						\Depn_0(X_{\cT}^1)&= \emptyset, &
						\Depn_0(X_{\cT}^2)&= \{3\},
					\end{align*}
					and $A = \{4\}$, $B= \{1,2\}$.
					Thus $\Card A = 1$ and $\Card(A\cup B)= 3$. It follows from Theorem \ref{th:mainTheorem} that for each $m$ with $1\leq m \leq 3$, there exists a representing measure $\mu$ for $L$ such that $\mult_\mu t = m$. We will construct such a measure for $m = 2$ (note that $\mult_{\widetilde{\mu}}t = \Rank \widetilde{A}_3 = 3$).
					
					Let $s, z_j, p_{j,\ell}, p_j, p_0$ be as in \eqref{eq:defmathcalZ-v2}, \eqref{def-of-p0}. By the computations above, we have 
					$s = 2$, 
					$z_1 = 1$, $z_2 = 2$, 
					$p_{1,0} = p_{2,1} = 1$, 
					$p_{1,1} = p_{2,0} = 0$, $p_1 = p_2 = 1$ 
					and 
					$p_0 = 2$. 
					Since the columns of each block column $X_{\cT}^i$ are already in the order \eqref{order-permutation-Q}, we choose the permutation matrix $Q = I_4$, whence $\widecheck{X}_{\cT}^i = X_{\cT}^iQ = X_{\cT}^i$ for each $i$. Decompose $\widecheck{X}_{\cT}^i$ as in \eqref{block-decomposition-of-widehatYzj-semidefinite-case} for each $i$. Next we will compute the matrices \eqref{relation-matrices} satisfying \eqref{column-relation-semidefinite-case} by the formulas:
					\begin{align*}
						\col(
						\widecheck{B}_{0}^{(1,1)},
						\widecheck{B}_{1,(0)}^{(1,1)},
						\widecheck{B}_{1}^{(1,1)})
						&= \begin{pmatrix}
							\widecheck{X}_{\cT}^0 & 
							(\widecheck{X}_{\cT}^1)_{(0)} & (\widecheck{X}^1_{\cT})_{(1,0)} & (\widecheck{X}^1_{\cT})_{(1,1)} &(\widecheck{X}^1_{\cT})_{(2,0)}
						\end{pmatrix}^\dag (\widecheck{X}_{\cT}^1)_{(2,1)} \\
						&= \col(\col(0,0,0,0),\col(0,0),1), \\
						\col(
						\widecheck{B}_{0}^{(2,0)},
						\widecheck{B}_{1}^{(2,0)},
						\widecheck{B}_{2,(0)}^{(2,0)})
						&= \begin{pmatrix}
							\widecheck{X}_{\cT}^0 & \widecheck{X}_{\cT}^1 & (\widecheck{X}^2_{\cT})_{(0)}
						\end{pmatrix}^\dag (\widecheck{X}_{\cT}^2)_{(1,0)} \\
						&= \col(\col(0,0,0,-2),\col(0,0,-\frac{3}{2},-\frac{3}{2}),\col(0,0)).
					\end{align*}
					Hence, the matrices 
					$\widecheck{H}_{0}^{(j)}$,
					$\widecheck{H}_{1}^{(j)}$,
					$\widecheck{H}_{2,(0)}^{(j)}$,
					$\widecheck{H}_{2}^{(j)}$
					for
					$j=1,2$,
					in
					\eqref{column-relation-v3-semidefinite-case}
					are equal to:
					\begin{align*}
						&\widecheck{H}_{0}^{(1)} = \mathbf{0}_{4\times 2}, \quad
						\widecheck{H}_{1}^{(1)} = \begin{pmatrix}
							\mathbf{0}_{4\times 1} & \widecheck{B}_{0}^{(1,1)}
						\end{pmatrix} = \mathbf{0}_{4\times 2}, \\
						&\widecheck{H}_{2,(0)}^{(1)} = \begin{pmatrix}
							\mathbf{0}_{2\times 1} & \widecheck{B}_{1,(0)}^{(1,1)}
						\end{pmatrix} = \mathbf{0}_{2}, \quad \widecheck{H}_{2}^{(1)} = \begin{pmatrix}
							0 & -\widecheck{B}_{1}^{(1,1)} \\[0.2em]
							0 & 1
						\end{pmatrix} = \begin{pmatrix}
							0 & -1 \\
							0 & 1
						\end{pmatrix}, \\
						&\widecheck{H}_{0}^{(2)} = \begin{pmatrix}
							\widecheck{B}_{0}^{(2,0)} & \mathbf{0}_{4\times 1}
						\end{pmatrix} = \row(\col(0,0,0,-2),\mathbf{0}_{4\times 1}), \\ &\widecheck{H}_{1}^{(2)} = \begin{pmatrix}
							\widecheck{B}_{1}^{(2,0)} & \mathbf{0}_{4\times 1}
						\end{pmatrix} = -\frac{3}{2}\row(\col(0,0,1,1),\mathbf{0}_{4\times 1}), \\
						&\widecheck{H}_{2,(0)}^{(2)} = \begin{pmatrix}
							\widecheck{B}_{2,(0)}^{(2,0)} & \mathbf{0}_{2\times 1}
						\end{pmatrix} = \mathbf{0}_{2}, \quad \widecheck{H}_{2}^{(2)} = \begin{pmatrix}
							1 & 0 \\ 0 & 0
						\end{pmatrix}.
					\end{align*}
					We now compute
					\begin{align*}
						\widecheck{H}_{0} &= \widecheck{H}_{0}^{(1)} + \widecheck{H}_{0}^{(2)} = 
						\row(\col(0,0,0,-2),\mathbf{0}_{4\times 1}),\\
						\widecheck{H}_{1} &= \widecheck{H}_{1}^{(1)} + \widecheck{H}_{1}^{(2)} = 
						-\frac{3}{2}\row(\col(0,0,1,1),\mathbf{0}_{4\times 1}),\\
						\widecheck{H}_{2,(0)} &= \begin{pmatrix}
							\widecheck{B}_{2,(0)}^{(2,0)} & \widecheck{B}_{1,(0)}^{(1,1)}
						\end{pmatrix} = \mathbf{0}_2, \qquad
						\widecheck{H}_{2}= \begin{pmatrix}
							1 & -\widecheck{B}_1^{(1,1)} \\[0.2em]
							0 & 1
						\end{pmatrix} = \begin{pmatrix}
							1 & -1 \\
							0 & 1
						\end{pmatrix}, \\
						H_0 &= Q\widecheck{H}_{0} = \widecheck{H}_{0} = 
						\row(\col(0,0,0,-2),\mathbf{0}_{4\times 1}),\\
						H_1 &= Q\widecheck{H}_{1} = 
						-\frac{3}{2}\row(\col(0,0,1,1),\mathbf{0}_{4\times 1}),\\
						G &= 
						\col(
						H_0,
						H_1,
						\widecheck{H}_{2,(0)}).
					\end{align*}
					Define $\mathcal{H}_0$, $\mathcal{H}$ and $K$ as in \eqref{def-mathcalH0-etc}:
					\begin{align*}
						\mathcal{H}_0 = \begin{pmatrix}
							T_0 \\
							T_1
						\end{pmatrix}, \quad \mathcal{H} = \begin{pmatrix}
							T_1 & T_2 \\
							T_2 & T_3
						\end{pmatrix}, \quad K = \begin{pmatrix}
							T_3 \\
							T_4
						\end{pmatrix}.
					\end{align*}
					Note that $$k = \Rank \begin{pmatrix}
						\mathcal{H} & K
					\end{pmatrix} - \Rank \mathcal{H} = 6 - 6 = 0.$$
					Therefore we can choose $P_1 := I_2$ and hence $P = Q(P_1 \oplus I_2) = I_4$. We decompose $\widehat{K} = KP = K$ as in \eqref{def-widehatK} (using $p=4$, $p_0=2$, $m=2$, $\Card A=1$):
					\begin{align*}
						\widehat{K}_1 &=
						\col(-10,0,0,0,18,0,0,0),\\
						\widehat{K}_2 &=
						\col(0,-20,-10,-10,0,36,18,18),\\
						\widehat{K}_3 &=
						\row(
						\col(0,-10,-10,-10,0,18,18,18),
						\col(0,-10,-10,-10,0,18,18,18)).
					\end{align*}
					Computing $J$ satisfying \eqref{definition-hat-K-2-psd} by
					$$J := \begin{pmatrix}
						\mathcal{H} & \widehat{K}_1
					\end{pmatrix}^\dag \widehat{K}_2 = 
					\col(0,-2,0,0,0,-3,0,0,0),$$
					and computing 
					\begin{align*}
						\widehat{G} &= (I_8 \oplus P_1^T)G\widecheck{H}_{2}^{-1} \\ &= \row\Big(\col(0,0,0,-2,0,0,-\frac{3}{2},-\frac{3}{2},0,0),\col(0,0,0,-2,0,0,-\frac{3}{2},-\frac{3}{2},0,0)\Big),
					\end{align*}
					we have 
					\begin{align*}
						\widehat{K}_2 = \begin{pmatrix}
							\mathcal{H} & \widehat{K}_1
						\end{pmatrix}J \quad \text{and} \quad \widehat{K}_3 = \begin{pmatrix}
							\mathcal{H} & \widehat{K}_1 & \widehat{K}_2
						\end{pmatrix}\widehat{G}.
					\end{align*}
					Let $\widehat{Z}_1 := \begin{pmatrix}
						0
					\end{pmatrix}$, use \eqref{equality-v3-semidefinite-case} to compute 
					$$\widehat{Z}_2 = \begin{pmatrix}
						0
					\end{pmatrix},\; 
					\widehat{Z}_3 = \begin{pmatrix}
						0 & 0
					\end{pmatrix},\; 
					\widehat{Z}_4 = \begin{pmatrix}
						-68
					\end{pmatrix},\; 
					\widehat{Z}_5 = \begin{pmatrix}
						-34 & -34
					\end{pmatrix},\; 
					\widehat{Z}_6 = \begin{pmatrix}
						-34 & -34 \\
						-34 & -34
					\end{pmatrix}$$ and define $$\widehat{Z} \equiv \begin{pmatrix}
						\widehat{Z}_1 & \widehat{Z}_2 & \widehat{Z}_3 \\[0.2em]
						\widehat{Z}_2^T & \widehat{Z}_4 & \widehat{Z}_5 \\[0.2em]
						\widehat{Z}_3^T & \widehat{Z}_5^T & \widehat{Z}_6 
					\end{pmatrix} = \begin{pmatrix}
						0 & 0 & 0 & 0 \\
						0 & -68 & -34 & -34 \\
						0 & -34 & -34 & -34 \\
						0 & -34 & -34 & -34
					\end{pmatrix}.$$ We can easily check that the matrix $\widehat{Z}_1$ satisfies \eqref{choice-of-hat-Z1-semidefinite-case}. Let $\widehat{T}_2 = T_2P = T_2$. Next we compute matrices $U_1$ and $U_2$ by \eqref{061025-0840}:
					\begin{align*}
						\begin{pmatrix}
							U_1 \\
							U_2
						\end{pmatrix} = \begin{pmatrix}
							\mathcal{H}_0 & \mathcal{H} \\[0.2em]
							\widehat{T}_2 & \widehat{K}^T
						\end{pmatrix}^\dag \begin{pmatrix}
							\widehat{K}_1 \\
							\widehat{Z}_1 \\
							\widehat{Z}_2^T \\
							\widehat{Z}_3^T
						\end{pmatrix} = 
						\col(17,0,0,0,66,0,0,0,31,0,0,0).
					\end{align*}
					Write $J = \begin{pmatrix}
						J_1 \\
						J_2
					\end{pmatrix}$, where 
					$J_1 = 
					\col(0,-2,0,0,0,-3,0,0)$
					and $J_2 = \begin{pmatrix}
						0
					\end{pmatrix}$, and further decompose
					\begin{align*}
						J_1 = \begin{pmatrix}
							J_{1,0} \\
							J_{1,1}
						\end{pmatrix}, \quad U_2 = \begin{pmatrix}
							U_{2,0} \\
							U_{2,1}
						\end{pmatrix}, \quad \widehat{G} = \begin{pmatrix}
							\widehat{G}_1 \\
							\widehat{G}_2 \\
							\widehat{G}_3
						\end{pmatrix},
					\end{align*}
					as in \eqref{061025-0843}, where
					\begin{align*}
						J_{1,0} &= 
						\col(0,-2,0,0),\quad
						J_{1,1} = 
						\col(0,-3,0,0)
						,\\ 
						\quad U_{2,0} &= 
						\col(66,0,0,0),
						\quad U_{2,1} = 
						\col(31,0,0,0),\\
						\widehat{G}_1 &= 
						\row\big(
						\col(0,0,0,-2,0,0,-\frac{3}{2},-\frac{3}{2}),
						\col(0,0,0,-2,0,0,-\frac{3}{2},-\frac{3}{2})\big),\\
						\widehat{G}_2 &= \widehat{G}_3=\begin{pmatrix}
							0 & 0
						\end{pmatrix}. 
					\end{align*}
					As in \eqref{def:Q-n}, \eqref{def:Qi}, \eqref{061025-0853} define
					\begin{align*}
						&Q_0 := \begin{pmatrix}
							U_{2,0} & J_{1,0} & H_0\widecheck{H}_n^{-1}
						\end{pmatrix} =
						\begin{pmatrix}
							66 & 0 \\
							0 & -2
						\end{pmatrix} \oplus
						\begin{pmatrix}
							0 & 0 \\
							-2 & -2
						\end{pmatrix},\\
						&Q_1 := \begin{pmatrix}
							U_{2,1} & J_{1,1} & H_1\widecheck{H}_n^{-1}
						\end{pmatrix} = 
						\begin{pmatrix}
							31 & 0 \\
							0 & -3
						\end{pmatrix} \oplus
						\begin{pmatrix}
							-\frac{3}{2} & -\frac{3}{2} \\[0.3em]
							-\frac{3}{2} & -\frac{3}{2}
						\end{pmatrix},\\
						&Q_2 := P\begin{pmatrix}
							1 & -J_2 & -\widehat{G}_2 \\
							0 & 1 & -\widehat{G}_3 \\
							\mathbf{0}_{2,1} & \mathbf{0}_{2,1} & I_2
						\end{pmatrix} = I_4.
					\end{align*}
					Let $Z := P\widehat{Z}P^T = Z$, $T_5 := Z$ and $T_6 := \row(T_i)_{i\in [3;5]}(M_\cT(2))^\dagger
					\col(T_i)_{i\in [3;5]}$.
					The matrix polynomial $H(x)$ (see \eqref{gen-poly-semidefinite-case}), defined by
					\begin{align*}
						H(x) &= 
						(x-1)^3Q_2 - (x-1)^2Q_1 - (x-1)Q_0 - 
						\begin{pmatrix}
							U_1 & \mathbf{0}_{4,3} 
						\end{pmatrix}
					\end{align*}
					is a block column relation of 
					$M_{\cS^{(e)}}(3)$,
					where $\cS^{(e)}=(S_i)_{i\in [0;6]}$ is as in 
					\eqref{061025-0900}.
					Its determinant is the following:
					\begin{align*}
						\det H(x) = x^2(x-1)^5(x+1)^2(x^3 - 34x^2 - x + 17).
					\end{align*}
					The atoms of the minimal representing measure $\mu$ for $L$ are the zeroes of $\det H(x)$. Namely, writing $\mu = \sum_{j=1}^{6}\delta_{x_j}A_j$, we have \begin{align*}
						x_1 = -1, \quad x_2 = 0, \quad x_3 = 1, \quad x_4 \approx -0.7143, \quad x_5 \approx -0.6996, \quad x_6 \approx 34.0147.
					\end{align*}
					It remains to find the masses $A_j$ by computing
					\begin{align*}
						\col(A_i)_{i\in [6]}=
						\left (V_{(x_i)_{i\in [6]}}^{-1} \otimes I_4\right )
						\col(S_i)_{i\in [0;5]},
					\end{align*}
					where $V_{(x_i)_{i\in [6]}} = \left (x_j^{i-1}\right )_{i,j=1}^{6}$ is the Vandermonde matrix with $i$-th row equal to $\row(x_j^{i-1})_{j\in [6]}$.
					The masses are:
					\begin{align*}
						&A_1 = 
						\mathbf{0}_1 \oplus
						\begin{pmatrix}
							2 & 1 & 1 \\
							1 & 1 & 1 \\
							1 & 1 & 1
						\end{pmatrix}
						\quad A_2 = 
						2A_1,
						\quad A_3 = 
						\begin{pmatrix}
							0 & 0 \\
							0 & 1
						\end{pmatrix} \oplus
						\begin{pmatrix}
							1 & 0 \\
							0 & 0
						\end{pmatrix},\\          
						&A_4 \approx 
						\begin{pmatrix}
							1.9792
						\end{pmatrix} \oplus \mathbf{0}_3,
						\quad A_5 \approx
						\begin{pmatrix}
							2.0208
						\end{pmatrix} \oplus \mathbf{0}_3,
						\\
						&A_6 \approx
						\begin{pmatrix}
							7.4734 \cdot 10^{-7}
						\end{pmatrix} \oplus \mathbf{0}_3,
					\end{align*}
					We see that $x_3 = t = 1$ and $\Rank A_3 = 2$, 
					whence $\mult_\mu t = m = 2$. 
					
					\begin{remark}
						\label{re:remarkForExample}
						\begin{enumerate}[leftmargin=*]
							\item
							\label{re:remarkForExample}
							The corresponding masses $A_j$ for the constructed measure $\mu$ are obtained via
							$$\begin{pmatrix}
								\col(A_i)_{i\in [\ell]}
							\end{pmatrix}^T = \left (V_{(x_i)_{i\in [\ell]}}^{-1} \otimes I_p\right )
							\col(S_i)_{i\in [0;\ell-1]}$$ as in \cite[Remark 3.2(3)]{ZZ25}. If $\ell > 2n+3$, then not all $S_i$ are given. Computing 
							\medskip
							$$T_r = \row(T_i)_{i\in [r-n-1;r-1]}\col(\begin{pmatrix}
								U_1 & \mathbf{0}_{p\times (m-\Card A+p-p_0)}\end{pmatrix}Q_n^{-1}), \col(Q_{i-1}Q_n^{-1})_{i\in[n]})$$
							\medskip
							for $ r=2n+3,2n+4,\ldots,\ell - 1$, where $U_1$ is as in \eqref{061025-0840} and $Q_i$ are as in \eqref{def:Qi} and \eqref{061025-0853},
							and defining $$\cT^{(e_1)}:=(T_i)_{i\in [0;\ell-1]},$$ we can compute the missing moments $S_{2n+3}, S_{2n+4}, \ldots, S_{\ell - 1}$ by $$S_{r}:=L_{\cT^{(e_1)}}((x+t)^{r}),\quad r=2n+3,2n+4,\ldots,\ell - 1.$$
							\item
							The matrix polynomial $W(x) := x^{n+1}I_p - \sum_{i=0}^{n}x^iW_i$, where
							$$\col(W_i)_{i \in [0;n]} := M_{\cS}(n)^\dag \col(S_i)_{i \in [n+1;2n+1]}$$
							is also a block column relation of $M_{\cS^{(e)}}(n+1)$ and $\cS^{(e)}$ is as in \eqref{061025-0900}. However, it is not clear from the factorization of the determinant of $W(x)$, which zeroes of $\det W(x)$ are the atoms of the measure $\mu$. For example, using the data given in the example above, we have
							$$\col(W_i)_{i \in [0;2]} := M_{\cS}(2)^\dag \col(S_i)_{i \in [3;5]},$$ which gives us $$W_0 = \begin{pmatrix}
								-17 & 0 \\
								0 & 0
							\end{pmatrix} \oplus \begin{pmatrix}
								\frac{1}{5} & 0 \\
								-\frac{1}{5} & 0
							\end{pmatrix}, \; W_1 = I_2 \oplus \begin{pmatrix}
								\frac{13}{20} & \frac{1}{4} \\
								\frac{1}{4} & \frac{1}{4}
							\end{pmatrix}, \; W_2 = \begin{pmatrix}
								34 & 0 \\
								0 & 0
							\end{pmatrix} \oplus \begin{pmatrix}
								\frac{3}{20} & -\frac{1}{4} \\
								-\frac{1}{4} & -\frac{1}{4}
							\end{pmatrix}.$$
							Defining the matrix polynomial $$W(x) = x^3I_4 - x^2W_2 - xW_1 - W_0,$$ we have $$\det W(x) = \frac{1}{10}x^2(x-1)^2(x+1)^2(x^3-34x^2-x+17)(10x^3+x^2-1).$$
							We can see that all of the atoms of the measure $\mu$ constructed in the example are the zeroes of $\det W(x)$, but just using the matrix polynomial $W(x)$ to find the atoms of the representing measure for the given linear operator, we have no direct way of knowing that the zeroes of the factor $10x^3+x^2-1$ are not the atoms of the measure.
						\end{enumerate}
					\end{remark}
	
	

\end{document}